\newtheorem{thm}{Theorem}[section]
\newtheorem{cor}[thm]{Corollary}
\newtheorem{lem}[thm]{Lemma}
\newtheorem{prop}[thm]{Proposition}
\newtheorem{remark}[thm]{Remark}
\newtheorem{fact}[thm]{Fact}
\theoremstyle{rmk}
\newtheorem{rmk}[thm]{Remark}
\newcommand{\e} {\varepsilon}
\newcommand{\p} {\textnormal{\textsf{P}}}
\newcommand{\E} { \textnormal{\textsf{E}}}
\newcommand{\PP} { \mathbb{P} }
\newcommand{\EE} { \mathbb{E} }
\def\un{\mathds{1}}
\newcommand{\N} { \mathbb{N} }
\newcommand{\Z} { \mathbb{Z} }
\newcommand{\R} { \mathbb{R} }
\renewcommand{\P} { \mathbb{P} }
\def\k{\kappa}
\def\o{\omega}
\def\po{P_\omega}
\def\eo{E_\o}
\begin{document}

\title[Collisions of transient RWRE]
{Arbitrary many walkers meet infinitely often in a subballistic random environment}
\date{\today}

\author{Alexis Devulder}
\address{
Laboratoire de Math\'ematiques de Versailles, UVSQ, CNRS, Universit\'e Paris-Saclay,
78035 Versailles, France.
}
\email{devulder@math.uvsq.fr }

\author{Nina Gantert}
\address{Technische Universit\"at M\"unchen,
Fakult\"at f\"ur Mathematik,
85748 Garching,
Germany }
\email{gantert@ma.tum.de}

\author{Fran\c{c}oise P\`ene}
\address{Univ Brest,  Institut Universitaire de France, UMR CNRS 6205,  Laboratoire de
Math\'ematiques de Bretagne Atlantique, France}
\email{francoise.pene@univ-brest.fr}

\maketitle

\begin{abstract}

We consider $d$ independent walkers in the same random environment in $\Z$.
Our assumption on the law of the environment is such that a single walker is transient to the right but subballistic. We show that - no matter what $d$ is - the $d$ walkers meet infinitely often, i.e. there are almost surely infinitely many times for which all the random walkers are at the same location.
\end{abstract}


\section{Introduction and statement of the main results}

\subsection{Motivation and main result}

Meetings, or collisions, of random walks have been studied since the early works of P{\'o}lya \cite{Polya}.
Indeed, P{\'o}lya \cite{Polya} proves that almost surely, two independent simple random walks meet infinitely often in $\Z$ or $\Z^2$,
but only a finite number of times in $\Z^3$. Proving this result was his main motivation for proving his celebrated result on transience and recurrence of simple random walks in $\Z^d$, $d\geq 1$
(see P{\'o}lya \cite{PolyaInc} "Two incidents").
Moreover, Dvoretzky and Erd{\"o}s \cite{Dvoretzky_Erdos} stated that
almost surely, in $\Z$, $3$ independent simple random walks meet (simultaneously) infinitely often whereas $4$ independent simple random walks meet only a finite number of times.

More recently, the question whether two or three independent random walks meet infinitely often or only a finite number of times has been studied on
some other graphs, for example wedge combs, percolation clusters of $\Z^2$ and some trees.
See e.g.
Krishnapur and Peres \cite{  KrishnapurPeres},
Chen, Wei and Zhang \cite{Chen_Wei_Zhang_08},
Chen and Chen \cite{Chen_Chen_10}, \cite{Chen_Chen_11},
Barlow, Peres and Sousi \cite{BarlowPeresSousi}, Hutchcroft and Peres \cite{HP15} and Chen \cite{Chen16}.
For some applications of collisions of random walks in physics and in biology, we refer to
Campari and Cassi \cite{CampariCassi}.

Meetings/collisions  have also been considered recently for random walks in random environments (RWRE) in $\Z$.
It was proved by Gantert, Kochler and P\`ene \cite{NMFa} that almost surely, $d$ independent random walks in the same (recurrent) random environment meet infinitely often in the origin for any $d\geq 2$. In other words, the $d$ walkers together are a recurrent Markov chain on
$\Z^d$.
Related questions have been studied by
Gallesco \cite{Gallesco} and Zeitouni \cite[p. 307]{Z01}.
A necessary and sufficient condition for
several independent (recurrent) Sinai walks and simple random walks
to meet infinitely often (even if the walkers together are a transient Markov chain) has been
proved by Devulder, Gantert and P\`ene
\cite{Devulder_Gantert_Pene}.
Systems of several random walks in the same random environment have also been investigated
in some other papers,
see e.g. Andreoletti \cite{Andreoletti_system}, and
Peterson et al. \cite{Peterson_Sepp}, \cite{Peterson_2010} and \cite{Jara_Peterson},
but these papers do not consider collisions, which are our main object of study.

The aim of the present paper is to
investigate if there are infinitely many meetings in the (zero speed) transient case,
that is, when each random walk is transient with zero speed in the same random environment. In this case, the $d$ walkers together are clearly a transient
Markov chain, but it turns out that - no matter what $d$ is - the $d$ walkers meet infinitely often, almost surely.

Let $\o:=(\o_x)_{x\in\Z}$ be a collection of i.i.d random variables,
taking values in $(0,1)$ and with joint law $\p$.
A realization of $\o$ is called an {\it environment}.
Let $d\geq 2$, and $(x_1,\dots, x_d)\in\Z^d$.
Conditionally on $\o$, we consider $d$ independent Markov chains
$\big(S_n^{(j)}\big)_{n\in\N}$, $1\leq j \leq d$,
defined for all $1\leq j \leq d$ by $S_0^{(j)}=x_j$
and for every $n\in\N$, $y\in\Z$ and $z\in\Z$,
\begin{equation}\label{probatransition}
    \po^{(x_1,\dots, x_d)}\big(S_{n+1}^{(j)}=z|S_n^{(j)}=y\big)
=
\left\{
\begin{array}{ll}
\o_y & \text{ if } z=y+1,\\
1-\o_y & \text{ if } z=y-1,\\
0      & \text{ otherwise}.
\end{array}
\right.
\end{equation}
We say that each $\big(S_n^{(j)}\big)_{n\in\N}$ is a {\it random walk in random environment} (RWRE).
So, $S^{(j)}:=\big(S_n^{(j)}\big)_{n\in\N}$, $1\leq j \leq d$ are $d$ independent random walks in the same (random) environment $\o$.
Here and throughout the paper, $\N$ denotes the set of nonnegative integers, including $0$.

The probability $\po^{(x_1,\dots, x_d)}$ is called the {\it quenched law}.
We also consider the {\it annealed law}, which is defined by
$$
    \P^{(x_1,\dots,x_d)}(\cdot)=\int\po^{(x_1,\dots,x_d)}(\cdot)\p(\textnormal{d}\o).
$$
Notice in particular that $\big(S_n^{(j)}\big)_{n\in\N}$, $1\leq j \leq d$, is not Markovian under $\P^{(x_1,\dots,x_d)}$.
We sometimes consider $S_n:=S_n^{(1)}$, $n\in\N$.
We then denote by $\po^x$ the quenched law for a single RWRE starting at $x\in\Z$, and write $\po$ instead of $\po^x$ when $x=0$,
and
$
    \P(\cdot)=\int\po(\cdot)\p(\textnormal{d}\o)
$
the corresponding annealed law.
We denote by $\E$, $\eo^{(x_1,\dots,x_d)}$, $\EE$, $\eo$ and $\eo^x$
the expectations under $\p$, $\po^{(x_1,\dots,x_d)}$, $\P$, $\po$ and $\po^x$ respectively. We also introduce
$$
    \rho_x
:=
    \frac{1-\o_x}{\o_x},
\qquad
    x\in\Z.
$$
We assume that there exists
$\varepsilon_0\in(0,1/2)$ such that
\begin{equation}\label{UE}
    \p\big[\omega_0\in[\varepsilon_0,1-\varepsilon_0]\big]=1.
\end{equation}
This ellipticity condition is standard for RWRE.
Solomon \cite{S75} proved that $(S_n)_n$ is $\p$-almost surely recurrent
when $\E(\log \rho_0)=0$, and  transient to the right
when $\E(\log \rho_0)<0$.
In this paper we only consider the transient, subballistic case.
More precisely, we assume that
\begin{equation}\label{rho<0}
  \E(\log \rho_0)<0
\end{equation}
hence $\big(S_n\big)_{n\in\N}$ is a.s. transient to the right, and
\begin{align}
& \label{ConditionA}
    \text{ there exists $0<\k<1$ such that }
    \E\big(\rho_0^\k\big)=1.
\end{align}
In this case, $\kappa$ is unique and
it has been proved by Kesten, Koslov and Spitzer  \cite{KestenKozlovSpitzer}
that, under the additional hypothesis that $\log\rho_0$ has a non-arithmetic distribution,
$S_n$ is asymptotically of order $n^{\k}$ as $n\to+\infty$ (we do not make this assumption in the present paper).
Notice that due to our hypotheses \eqref{rho<0} and \eqref{ConditionA}, $\omega_0$ is non-constant.
This excludes the degenerate case of simple random walks, which behave very differently.

\medskip

Our main result is the following.

\medskip

\begin{thm}\label{Theorem1Meeting}
Assume \eqref{UE}, \eqref{rho<0} and \eqref{ConditionA}. Let $x_1,...,x_d\in \mathbb Z$ with the same parity, where $d\geq 2$.
Then $\P^{(x_1,\dots,x_d)}$-almost surely, there exist infinitely many $n\in\N$ such that
$$
    S_n^{(1)}= S_n^{(2)}=\ldots=S_n^{(d)}.
$$
\end{thm}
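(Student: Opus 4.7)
The plan is to locate, along $\Z$, infinitely many potential valleys so deep that all $d$ walkers are simultaneously trapped inside them for a long time, and within each such ``exceptional'' trap to force a meeting at the bottom with positive quenched probability. A conditional Borel--Cantelli argument then yields infinitely many meetings. Throughout I work with the classical potential $V(0)=0$, $V(n)=\sum_{y=1}^n \log \rho_y$ for $n\geq 1$ (and the symmetric extension for $n<0$). By \eqref{rho<0} the walk $V$ has negative drift, and by \eqref{ConditionA} the ladder heights of $-V$ have a one-sided exponential tail of rate $\k$, which is the Kesten--Kozlov--Spitzer regime: a valley of depth $h$ in $V$ occurs with density $\asymp e^{-\k h}$, is crossed by a single RWRE in time of order $e^h$, and the walker visits its bottom of order $e^h$ times before exiting.

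\textbf{Construction of exceptional traps.} Using the exponential tail of trap depths, one extracts, on a typical environment, an infinite sequence of disjoint trap intervals $I_n=[a_n,b_n]\subset \naturals$ with bottoms $c_n$ and depths $H_n\to\infty$ satisfying $H_n \gg \log\bigl(\sum_{k<n}e^{H_k}\bigr)$. The purpose of this domination is that the total time any walker needs to cross everything strictly to the left of $I_n$ is $e^{o(H_n)}$, while the first walker entering $I_n$ stays there for a time of order $e^{H_n}$. Hence for each such $n$ there is a random time window, of length of order $e^{H_n}$, during which all $d$ walkers lie inside $I_n$.

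\textbf{Meeting at the bottom.} Fix the environment. Restricted to $I_n$ until exit, each $S^{(j)}$ is a reversible chain whose invariant weight at $c_n$ is bounded below uniformly in $n$ thanks to the ellipticity \eqref{UE} and to the local shape of the valley. A second-moment computation on the number of visits to $c_n$ during the common sojourn window, combined with the quenched independence of the $d$ walkers, yields a uniform lower bound $\delta>0$, independent of $n$, for the quenched probability that $S^{(1)}_m=\cdots=S^{(d)}_m=c_n$ for some $m$ in the common window. The assumption that $x_1,\dots,x_d$ share a parity ensures that the parities at $c_n$ are compatible across the walkers.

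\textbf{Conclusion and main obstacle.} Disjointness of the $I_n$, together with the strong Markov property applied at the successive entry times to $I_n$, makes the meeting events conditionally independent given $\omega$, each with quenched probability at least $\delta$; the conditional second Borel--Cantelli lemma (Lévy's extension) then gives infinitely many meetings $\P^{(x_1,\dots,x_d)}$-almost surely. The principal obstacle is the temporal mismatch between the walkers' arrivals at a common trap: a priori they reach $I_n$ at wildly different random times. This is precisely resolved by selecting traps so dominant in the potential landscape that the entry-time mismatch is negligible compared to the sojourn time, and the resolution fundamentally uses the heavy-tailed distribution of sojourn times produced by assumption \eqref{ConditionA}.
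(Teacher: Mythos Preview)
Your overall strategy matches the paper's: select an infinite sequence of dominant valleys, show all $d$ walkers are simultaneously trapped in each for a long time, establish a uniform positive quenched probability of meeting at the bottom, and deduce infinitely many meetings. Two technical choices differ---the paper uses a coupling to the stationary reflected chain rather than a second-moment argument, and Doob's theorem on shift-invariant events rather than L\'evy's conditional Borel--Cantelli for the final step---but both pairs of alternatives are viable in principle. (A minor slip: the meeting events are not conditionally \emph{independent} under $P_\omega$; what the strong Markov property actually gives you is $P_\omega(A_{n+1}\mid\mathcal F_n)\ge\delta$, which is what L\'evy's extension needs.)

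The genuine gap is your assertion that the invariant weight of the reflected chain at the bottom $c_n$ is ``bounded below uniformly in $n$ thanks to the ellipticity \eqref{UE} and to the local shape of the valley.'' Ellipticity bounds individual transition probabilities, not the profile of the potential; a deep valley may be flat near its minimum, with many sites $k$ satisfying $V(k)-V(c_n)=O(1)$, and then $\widehat\nu_n(c_n)\asymp\bigl(\sum_k e^{-(V(k)-V(c_n))}\bigr)^{-1}$ is small. Nothing you have written rules this out. The paper spends Lemma~\ref{LemmaConstantInvariantMeasure} (together with the appendix, Propositions~\ref{Lemma_Egalite_Loi_Gauche_de_m1} and~\ref{LemmaTanakaStyleForRW}) on precisely this point: it identifies the law of the potential on each side of the valley bottom as a random walk conditioned to stay positive, respectively nonnegative, and proves $\E\bigl[\sum_k e^{-(V(k)-V(c_n))}\bigr]\le C$ uniformly in the depth. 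Even then the bound is only in expectation, so the valley selection must impose, \emph{in addition} to your depth domination $H_n\gg\log\sum_{k<n}e^{H_k}$, the shape constraint that this sum stays below a fixed threshold (the events $\Omega_5^{(i)}$ of Proposition~\ref{BorelCantelli2}); one then has to verify that the depth and shape constraints are simultaneously satisfied infinitely often, which is where the calibration $z_i=(\log i)/\kappa$ and the independence structure of Remark~\ref{indep} enter. Without this control, both a second-moment computation and any stationary comparison collapse, since neither produces a positive lower bound once $\widehat\nu_n(c_n)\to 0$ along the selected sequence.
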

Notice in particular that each random walk $S^{(j)}$ is transient, however $d$ independent random walks $S^{(j)}, 1 \leq j \leq d$ meet simultaneously infinitely often, for every $d>1$. Theorem \ref{Theorem1Meeting}
remains true for $-1<\k<0$ when $\E(\log \rho_0)>0$
by symmetry (i.e., by replacing $\omega_j$ by $1-\omega_{-j}$ for every $j\in\Z$).
To the best of our knowledge, this is the first example of a transient random walk, for which arbitrary many independent realizations of this
random walk meet simultaneously infinitely often.

Also, notice that we do not require a non-arithmetic distribution in our Theorem \ref{Theorem1Meeting}.


In this paper, we use quenched techniques. For more information and results about quenched techniques for RWRE in the case $0<\k<1$,
we refer to Enriquez, Sabot and Zindy \cite{ESZ2}, \cite{ESZ3}, Peterson and Zeitouni \cite{Peterson_Zeitouni}
and Dolgopyat and Goldsheid \cite{Dolgopyat_Goldsheid}. For a quenched study of the continuous time analogue of RWRE, that is, diffusions in a drifted Brownian potential expressed in terms of $h$-extrema, we refer to Andreoletti et al. \cite{AndreolettiDevulder} and \cite{AndreolettiDevulderVechambre}.

Our results may be rephrased in terms of the capacity of the diagonal $\{y\in\Z^d: y_1 = \ldots =y_d\}$ for the Markov chain
$\big(S_n^{(1)}, S_n^{(2)}, \dots, S_n^{(d)}\big)$ on $\Z^d$, see \cite{BePePe}.
\subsection{Sketch of the proof and organization of the paper}
We introduce, in Section \ref{Sect_Potentiel}, the potential $V$ for the RWRE, and we provide some useful estimates concerning this potential.
In particular, we prove a key lemma, Lemma \ref{LemmaConstantInvariantMeasure}, which will enable us to
control the invariant probability measure of a RWRE reflected inside a typical valley.

In Section \ref{Sect_Valleys}, we show that $\p$-almost surely, there exist  successive valleys $[a_i, c_i]$, $i\geq 1$,  for the potential $V$,
with bottom $b_i$ and depth at least $f_i\approx \log (i!)$ for some deterministic sequence $(f_i)_i$,
with $a_i<b_i<c_i<a_{i+1}<\dots$, so that the valleys $[a_i, c_i]$ are disjoint.

We also show (see Propositions \ref{BorelCantelli2} and \ref{Prop_i_n}) that almost surely for infinitely many $i$,
{\bf (a)} the depth of the $i$-th valley $[a_i,c_i]$ is larger than $f_i+z_i$
for some deterministic $z_i$,
and there exists no valley of depth at least $f_i$ before it,
and {\bf (b)} the invariant probability measure of a RWRE reflected inside this valley is uniformly large at the bottom $b_i$ of the $i$-th valley (due to Lemma \ref{LemmaConstantInvariantMeasure}).
We denote the sequence of these indices $i$ by $(i(n))_{n\in\N}$. The valleys number $i(n)$, $n\in\N$,  are called the {\it very deep valleys}.
See Figure \ref{figure2_Potential_Valley_Transient} for the pattern of the potential for a very deep valley.

The advantage of such a very deep valley is that thanks to ${\bf (a)}$, with high probability, the particles $S^{(1)}, \dots, S^{(d)}$ will arrive quickly to the bottom $b_{i(n)}$ of this valley (see Lemma \ref{LemmaHittingTimeVeryDeepValleys}), and stay together a long time inside this valley (see Lemma \ref{LemmaStayingInside}).
This will ensure, thanks to a coupling argument (detailed in Subsection \ref{SubSecEndofSketch})
and to ${\bf (b)}$, that $S^{(1)},...,S^{(d)}$ will meet simultaneously
in the valley of bottom $b_{i(n)}$ with a quenched probability greater than some strictly positive constant.
We conclude in Subsection \ref{Sect_conclusion}
that
with a quenched probability greater than some strictly positive constant,
$S^{(1)},...,S^{(d)}$ will meet simultaneously for an infinite number of $n$.
This, combined with a result of Doob, will ensure that $\P^{(x_1,\dots,x_d)}$-almost surely,
$S^{(1)},...,S^{(d)}$ will meet simultaneously for an infinite number of $n$.

Finally, we prove in Section \ref{Sect_RW_conditionned} two identities in law related to random walks conditioned to stay nonnegative
or conditioned to stay strictly positive.


\section{Potential and some useful estimates}\label{Sect_Potentiel}

\subsection{Potential and hitting times}
We recall that the {\it potential} $V$ is a function of the environment $\o$, which is defined on $\Z$ as follows:
\begin{equation}\label{eq_def_V}
    V(x)
:=\left\{
    \begin{array}{lr}
    \sum_{k=1}^x \log\frac{1-\o_k}{\o_k}
    &
    \textnormal{if } x>0,\\
    0 & \textnormal{if }x=0,\\
    -\sum_{k=x+1}^0 \log\frac{1-\o_k}{\o_k} &  \textnormal{if } x<0.
    \end{array}
    \right.
\end{equation}
Here and throughout the paper, $\log$ denotes the natural logarithm.
We will write
\begin{equation}\label{defVminus}
  V(-\cdot) \textnormal{ for the process } (V(-y),\ y \in \Z)\, .
  \end{equation}
 For $x\in\Z$,
we define the hitting times of $x$ by $(S_n)_n$
by
\begin{equation}\label{tautauj}
    \tau(x)
:=
    \inf\{k\in\N\ :\ S_k=x\},
\end{equation}
where $\inf\emptyset=+\infty$ by convention.
We also define for
$x\geq 0$ and $y\in\Z$,
\begin{equation}\label{tautaujxy}
    \tau(x,y)
:=
    \inf\{k\in \N\ :\ S_{\tau(x)+k}=y\}.
\end{equation}
We recall the following facts, which explain why the potential plays a crucial role
for RWRE.

\begin{fact}[Golosov \cite{Golosov84}, Lemma 7 and Shi and Zindy \cite{ShiZindy} eq. (2.4) and (2.5) with a slight modification for the second inequality, obtained from the first one by symmetry]
\label{FactInegProbaAtteinte}
We have,
\begin{eqnarray}
\label{InegProba1}
    P_\omega^b[\tau(c)<k]
& \leq & k \exp\left(\min_{\ell \in [b,c-1]}V( \ell)-V(c-1)\right),\qquad     b<c\, ,
\\
\label{InegProba2}
    P_\omega^b[\tau(a)<k]
& \leq & k \exp\left(\min_{\ell \in [a,b-1]}V(\ell) -V(a)\right),\qquad     a<b\, .
\end{eqnarray}
\end{fact}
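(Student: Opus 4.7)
Both inequalities will follow from the classical ``bottom of the barrier'' first-passage analysis for a nearest-neighbor walk. Since \eqref{InegProba1} and \eqref{InegProba2} are mirror images of one another, my plan is to prove \eqref{InegProba1} in detail, and then to obtain \eqref{InegProba2} by an entirely analogous argument with the direction reversed.

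The proof of \eqref{InegProba1} proceeds in three steps. Step 1 picks a point $\ell^\ast \in [b,c-1]$ at which $V$ attains its minimum on that interval. Because the walk is nearest neighbor and $b \leq \ell^\ast < c$, every trajectory from $b$ that hits $c$ must first pass through $\ell^\ast$; the strong Markov property at the first visit to $\ell^\ast$, together with the trivial monotonicity $P_\omega^{\ell^\ast}[\tau(c)\leq k-n]\leq P_\omega^{\ell^\ast}[\tau(c)\leq k]$, reduces the task to bounding $P_\omega^{\ell^\ast}[\tau(c)\leq k]$. Step 2 invokes the classical gambler's-ruin formula for the scale function $S$ of the birth-death chain, which satisfies $S(x)-S(y)=\sum_{j=y}^{x-1}e^{V(j)}$, to compute the probability $p$ that a single excursion from $\ell^\ast$ reaches $c$ before returning to $\ell^\ast$:
$$
p \;=\; \omega_{\ell^\ast}\,\frac{e^{V(\ell^\ast)}}{\sum_{j=\ell^\ast}^{c-1} e^{V(j)}} \;\leq\; e^{V(\ell^\ast)-V(c-1)},
$$
where I retain only the $j=c-1$ term in the denominator. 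Step 3 observes that under $P_\omega^{\ell^\ast}$ the number of excursions from $\ell^\ast$ needed to first reach $c$ is geometric with parameter $p$; since each excursion costs at least one time step, the event $\{\tau(c)\leq k\}$ forces this geometric variable to be $\leq k$, which gives $P_\omega^{\ell^\ast}[\tau(c)\leq k]\leq kp$. Combined with Step 1, this is \eqref{InegProba1}.

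For \eqref{InegProba2} I would run the same three-step scheme with $\ell^\ast$ now minimizing $V$ on $[a,b-1]$; the one-excursion probability comes out as $p \leq (1-\omega_{\ell^\ast})\,e^{V(\ell^\ast-1)-V(a)}$, and the elementary identity $(1-\omega_\ell)\,e^{V(\ell-1)} = \omega_\ell\,e^{V(\ell)}$ (immediate from $V(\ell)-V(\ell-1)=\log\rho_\ell$) then turns this into $p \leq e^{V(\ell^\ast)-V(a)}$, matching the statement. The rest of the argument is identical. The only point requiring care is keeping the correct extremal term from each denominator sum --- $V(c-1)$ on the right and $V(a)$ on the left --- so that the advertised exponent is recovered; the rest is routine first-passage theory.
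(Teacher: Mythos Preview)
Your proof is correct and is essentially the classical argument. Note, however, that the paper does not supply its own proof of this Fact: it is quoted from Golosov \cite{Golosov84} (Lemma~7) and Shi--Zindy \cite{ShiZindy}, so there is no in-paper proof to compare against. Your three-step scheme (reduce to the minimizer $\ell^\ast$ by the strong Markov property, compute the one-excursion escape probability via the scale function \eqref{EgProba4}, then bound the number of excursions by $k$) is exactly the standard route used in those references.

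One small comment on precision: the claim that the number of excursions from $\ell^\ast$ until $\tau(c)$ is geometric with parameter $p$ is only literally true when every excursion that fails to reach $c$ returns to $\ell^\ast$. In the setting of \eqref{InegProba2} (hitting $a$ to the left) under a right-transient environment, an excursion from $\ell^\ast$ may escape to $+\infty$ without ever returning. The bound you want still holds: on $\{\tau(a)<k\}$ the walk visits $\ell^\ast$ at most $k$ times, and for each $j\leq k$ the probability that the first $j-1$ excursions return and the $j$-th one hits $a$ is $q^{j-1}p\leq p$, so $P_\omega^{\ell^\ast}[\tau(a)<k]\leq kp$. This is what your Step~3 really uses, and it does not require the full geometric law. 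With that clarification your argument is complete for both inequalities.
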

The following statements about hitting times can be found e.g. respectively in
\cite[(2.1.4)]{Z01} and \cite[Lemma 2.2]{Devulder_Persistence} coming from \cite[p. 250]{Z01}.
\begin{fact}
Let $a<b<c$ be three integers.Then
\begin{eqnarray}
\label{EgProba4}
    P_\omega^b[\tau(c)<\tau(a)]
& = &
    \left(\sum_{k=a}^{b-1}e^{V(k)}\right)\left(\sum_{k=a}^{c-1}e^{V(k)}\right)^{-1}\, ,
\\
\label{InegProba4}
    E_\omega^b[\tau(a)\wedge\tau(c)]
& \leq & \varepsilon_0^{-1}(c-a)^2 \exp\left(\max_{a\le\ell\le k\le c-1,\, k\ge b}(V(k)-V(\ell))\right)\, ,
\\
\label{InegProba3}
    E_\omega^b[\tau(a)\wedge\tau(c)]
& \leq & \varepsilon_0^{-1}(c-a)^2 \exp\left(\max_{a\le\ell\le k\le c-1,\, \ell\le b-1}(V(\ell)-V(k))\right)\, ,
\end{eqnarray}
where we write $a\wedge b=\min(a,b)$. Note that \eqref{InegProba4} comes from a comparison with the RWRE reflected at $a$ and  \eqref{InegProba3} comes from a comparison with the RWRE reflected at $c$.
\end{fact}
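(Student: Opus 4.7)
The plan is to establish each formula by a direct calculation exploiting the birth--death structure.

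For \eqref{EgProba4}, I would use the gambler's-ruin harmonic-function argument. The function $h(x) := P_\omega^x[\tau(c)<\tau(a)]$ satisfies $h(x) = \omega_x h(x+1) + (1-\omega_x) h(x-1)$ on $\{a+1,\dots,c-1\}$ with $h(a)=0$ and $h(c)=1$. Setting $u(x) := h(x)-h(x-1)$ turns this into the first-order recurrence $u(x+1) = \rho_x u(x)$, so $u(y+1) = u(a+1)\, e^{V(y)-V(a)}$. Summing in $y$ and imposing the two boundary conditions yields $h(x) = \big(\sum_{k=a}^{x-1} e^{V(k)}\big)\big/\big(\sum_{k=a}^{c-1} e^{V(k)}\big)$, which at $x=b$ is \eqref{EgProba4}.

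For the upper bounds on $E_\omega^b[\tau(a)\wedge\tau(c)]$, the approach, as indicated in the statement, is a coupling with a walk reflected at one endpoint. Consider the reflected walk whose transitions coincide with those of $(S_n)$ inside $(a,c)$ but which jumps from $a$ to $a+1$ deterministically. A pathwise coupling shows that before the first visit to $a$ the two walks are identical, and that after that time reflection keeps the walk inside $[a,c]$, which only postpones the first visit to $c$; consequently $E_\omega^b[\tau(a)\wedge\tau(c)]$ is bounded by the expected hitting time of $c$ for the reflected walk. I would then compute this quantity by setting $T_x := E_\omega^x[\tau(x+1)]$ under reflection at $a$, for which one-step analysis gives $T_a=1$ and $T_x = 1/\omega_x + \rho_x T_{x-1}$ for $x>a$. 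Solving this linear recursion explicitly and using $\rho_x\cdots\rho_{y+1} = e^{V(x)-V(y)}$ together with the ellipticity $\omega_y\geq \varepsilon_0$ produces $T_x \leq \varepsilon_0^{-1}\sum_{y=a}^x e^{V(x)-V(y)}$. Summing over $x=b,\dots,c-1$, bounding each exponential by the maximum over pairs $(y,x)$ with $a\leq y\leq x\leq c-1$, $x\geq b$, and noting that there are at most $(c-a)^2$ such pairs, yields \eqref{InegProba4} after the relabelling $\ell=y$, $k=x$.

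The bound \eqref{InegProba3} is completely parallel, using reflection at $c$. Defining $U_x := E_\omega^x[\tau(x-1)]$ for the walk reflected at $c$ gives the dual recursion $U_c=1$, $U_x = 1/(1-\omega_x) + \rho_x^{-1} U_{x+1}$; unrolling and applying ellipticity yields $U_x \leq \varepsilon_0^{-1}\sum_{k=x-1}^{c-1} e^{V(x-1)-V(k)}$, and summing $\sum_{x=a+1}^b U_x$ with the substitution $\ell = x-1$ produces \eqref{InegProba3}. The only step requiring genuine care is the index bookkeeping: one must verify that the summation ranges give precisely the constraints $k\geq b$ for \eqref{InegProba4} and $\ell\leq b-1$ for \eqref{InegProba3}, and that the coupling comparison with the reflected walk is expressed with the correct almost-sure inequality between hitting times. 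These checks are elementary but slightly error-prone, and constitute the main (mild) obstacle.
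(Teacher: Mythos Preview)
Your proposal is correct. The paper does not prove this Fact at all but merely cites it from \cite[(2.1.4)]{Z01} and \cite[Lemma 2.2]{Devulder_Persistence}; the reflection-coupling argument you sketch is precisely the one alluded to in the parenthetical note after \eqref{InegProba3}, and your computations for $T_x$ and $U_x$ are the standard ones found in those references.
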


\subsection{Excursions of the potential}
We now define by induction, as Enriquez, Sabot and Zindy in  \cite{ESZ2} and \cite{ESZ3},
the weak descending ladder epochs for $V$ as
\begin{equation}\label{eq_def_ei}
    e_0
:=
    0,
\qquad
    e_{i}
:=
    \inf\{k>e_{i-1}\ :\ V(k)\leq V(e_{i-1})\},
\qquad
    i\geq 1.
\end{equation}
In particular, the excursions $(V(k+e_i)-V(e_i),\ 0\leq k\leq e_{i+1}-e_i)$, $i\geq 0$ are i.i.d.

Recall that, by classical large deviation results (see e.g. \cite{ESZ2} Lemma 4.2, stated when $\log \rho_0$
has a non-lattice distribution but also valid in the lattice case),
there exists a function $I$ such that $I(0)>0$
and
\begin{equation}\label{eq_Large_Dev_RW}
    \forall k\geq 1, \forall y\geq 0,
\qquad
    \p[V(k)\geq k y]
\leq
    \exp[-k I(y)].
\end{equation}
One consequence of such large deviations is that $\E(e_1^2)<\infty$, since for every $k\geq 1$,
$\p(e_1> k)\leq \p[V(k)\geq 0]\leq \exp[-k I(0)]$.

We also introduce the height $H_i$ of the excursion $[e_i,e_{i+1}]$ of the potential, that is,
\begin{equation}\label{eq_def_Hi}
    H_i
:=
    \max_{e_i\leq k\leq e_{i+1}}[V(k)-V(e_i)],
\qquad
    i\geq 0.
\end{equation}

Recall that due to a result of Cram\'er (see e.g. \cite{Korshunov}, Theorem 3 in the lattice and in the non-lattice case, see also \cite[Prop 2.1]{Peterson}), there exists $C_M>0$ such that
\begin{equation}\label{eq_sup_RW}
    \p\left(\sup_{y\geq 0} V(y) \geq z\right)
\sim_{z\to+\infty, z\in \Gamma}
    C_M e^{-\k z},
\end{equation}
where $\Gamma=\mathbb R$ if $V$ is non-lattice and where $\Gamma=a\mathbb Z$ if the potential $V$ is lattice
and if $a\in\mathbb R_+^*$ is the largest positive real number such that $\p(V(n)\in a\mathbb Z,\ \forall n\ge 1)=1$.
The same result is true with $>$ instead of $\geq$, with $C_M$ replaced by a positive constant $C_M'$
($C_M'=C_M$ is the non-lattice case but
$C_M'=C_M e^{-\k a}$ in the lattice case with the previous notation), i.e. we have
\begin{equation}\label{eq_sup_RWprime}
    \p\left(\sup_{y\geq 0} V(y) > z\right)
\sim_{z\to+\infty, z\in \Gamma}
    C_M' e^{-\k z}.
\end{equation}
As a consequence, there exists a constant $C_M''\geq 1$ such that
\begin{equation}\label{eq_sup_RW_2}
    \forall z \in\R,
\qquad
    \p\left(\sup_{y\geq 0} V(y)\geq z\right)
\leq
    C_M'' e^{-\k z}.
\end{equation}

We will use in the rest of the paper the following result.
It is already known when the distribution of $\log\rho_0$ is non-lattice and we extend it for the lattice case.

\begin{prop}\label{FactIglehart}
(adapted from Iglehart \cite{Iglehart})
Under Hypotheses \eqref{UE}, \eqref{rho<0} and \eqref{ConditionA}, there exists $C_I>0$ such that
\begin{equation}\label{eq:Iglehart}
    P(H_1> h)\sim_{h\to+\infty, h\in \Gamma} C_I e^{-\k h}\quad\mbox{as}\quad h\rightarrow\infty\ \mbox{ in }\ \Gamma\, ,
\end{equation}
where $\Gamma=\mathbb R$ if $V$ is non-lattice and where $\Gamma=a\mathbb Z$ if the potential $V$ is lattice and if $a\in\mathbb R_+^*$ is the largest positive real number such that $\p(V(n)\in a\mathbb Z,\ \forall n\ge 1)=1$.
\end{prop}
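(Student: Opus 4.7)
The plan is to deduce the Iglehart tail asymptotic for $H_1$ from the already-known tail asymptotic \eqref{eq_sup_RWprime} of the global supremum $M := \sup_{y \ge 0} V(y)$, via a decomposition at the first weak descending ladder epoch $e_1$. Since the excursions $(V(k+e_i) - V(e_i))_{0 \le k \le e_{i+1}-e_i}$, $i \ge 0$, are i.i.d., so are the heights $H_0, H_1, H_2, \ldots$, and it suffices to establish the asymptotic for $\p(H_0 > h)$. The non-lattice case is Iglehart's classical result; the extra work is to carry the argument through uniformly so that it also delivers the lattice asymptotic with $h$ restricted to $\Gamma$.

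First I would write
$$
\p(M > h) \;=\; \p(H_0 > h) \;+\; \p(H_0 \le h,\, M > h),
$$
and then apply the strong Markov property of $V$ at $e_1$ to the second term: the shifted potential $V' := (V(k+e_1) - V(e_1))_{k \ge 0}$ is independent of $\mathcal{F}_{e_1}$ and has the same law as $V$, so with $M^* := \sup_{k \ge 0} V'(k)$,
$$
\p(H_0 \le h,\, M > h) \;=\; \E\bigl[\,\p(M^* > h - V(e_1))\,;\, H_0 \le h\,\bigr].
$$
Since $V(e_1) \le 0$, and since $V(e_1) \in \Gamma$ in the lattice case, we have $h - V(e_1) \ge h$ and $h - V(e_1) \in \Gamma$ whenever $h \in \Gamma$. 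Applying \eqref{eq_sup_RWprime} to $M^*$ gives pointwise, as $h \to \infty$ in $\Gamma$,
$$
e^{\kappa h} \, \p(M^* > h - V(e_1)) \;\longrightarrow\; C_M' \, e^{\kappa V(e_1)},
$$
while \eqref{eq_sup_RW_2} yields the uniform bound $e^{\kappa h}\,\p(M^* > h - V(e_1)) \le C_M''\, e^{\kappa V(e_1)} \le C_M''$. Dominated convergence then gives
$$
\lim_{h \to \infty,\, h \in \Gamma} e^{\kappa h} \, \E\bigl[\,\p(M^* > h - V(e_1))\,;\, H_0 \le h\,\bigr] \;=\; C_M' \, \E\bigl[e^{\kappa V(e_1)}\bigr],
$$
and combining with \eqref{eq_sup_RWprime} I would obtain
$$
\p(H_0 > h) \;\sim_{h \to \infty,\, h \in \Gamma}\; C_M'\bigl(1 - \E[e^{\kappa V(e_1)}]\bigr)\, e^{-\kappa h},
$$
so that $C_I := C_M'\bigl(1 - \E[e^{\kappa V(e_1)}]\bigr)$ is the claimed constant.

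It then remains to check that $C_I > 0$. Since $\log \rho_0$ is non-constant (otherwise \eqref{ConditionA} could not hold with $\kappa \in (0,1)$ given \eqref{rho<0}) and has strictly negative mean, one has $\p(\log \rho_0 < 0) > 0$ and therefore $\p(V(e_1) < 0) > 0$; combined with $V(e_1) \le 0$ almost surely, this gives $\E[e^{\kappa V(e_1)}] < 1$. I expect the main technical concern to be tracking admissible values in the lattice case: the argument only goes through because $V(e_1)$ takes values in $\Gamma$, which keeps $h - V(e_1)$ inside $\Gamma$ when $h \in \Gamma$, so that the lattice form of \eqref{eq_sup_RWprime} remains applicable at each step. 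Everything else (i.i.d. excursions, strong Markov at $e_1$, and the dominated convergence step) is insensitive to whether the walk is lattice or not.
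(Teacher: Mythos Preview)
Your proof is correct and follows essentially the same route as the paper: decompose $\p(\sup_\N V>h)$ according to whether $H_0>h$ or not, apply the strong Markov property at $e_1$, and pass to the limit via dominated convergence using \eqref{eq_sup_RWprime} and \eqref{eq_sup_RW_2}. Your presentation even makes explicit two points the paper leaves implicit, namely why $V(e_1)\in\Gamma$ keeps $h-V(e_1)$ in $\Gamma$ in the lattice case, and why $\E[e^{\kappa V(e_1)}]<1$ so that $C_I>0$.
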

\begin{proof}[Proof]
When $V$ is non-lattice, the result is proved in \cite[Thm 1]{Iglehart}. When $V$ is lattice, the proof is the same as the proof of
\cite[Thm 1]{Iglehart} with the use of \eqref{eq_sup_RWprime}
instead of \cite[Lemma 1]{Iglehart}. However, since the proof is short, we write it below with our notations.

First, notice that for $z>0$,
\begin{eqnarray*}
    \p\Big[\sup_\N V>z\Big]
& = &
    \p\Big[\sup_\N V>z, H_0>z\Big]
+
    \p\Big[\sup_\N V>z, H_0\leq z\Big]
\\
& = &
    \p\Big[H_0>z\Big]
+
    \E\bigg[\un_{\{H_0 \leq z\}}\p\bigg(\sup_{\N} V >z-y\bigg)\bigg|_{y=V(e_1)}\bigg]
\end{eqnarray*}
by the strong Markov property applied at stopping time $e_1$. Hence,
\begin{equation}\label{Eq_Egalite_Iglehart}
    e^{\k z} \p\Big[H_0>z\Big]
=
    e^{\k z}\p\Big[\sup_\N V>z\Big]
-
    \E\bigg[\un_{\{H_0 \leq z\}}e^{\k V(e_1)}\alpha(z)\bigg],
\end{equation}
where
$$
    \alpha(z)
:=
    \bigg(e^{\k(z-y)}\p\bigg(\sup_{\N} V >z-y\bigg)\bigg)\bigg|_{y=V(e_1)}.
$$
Notice that when $z\to +\infty$ with $z\in\Gamma$,
$\un_{\{H_0 \leq z\}}\to 1$ a.s. since $V$ is transient to $-\infty$, whereas
$\alpha(z)\to C_M'$ a.s. by \eqref{eq_sup_RWprime} since $(z-V(e_1))\in \Gamma$ and goes to infinity.
Also,
$\sup_{z>0}\big|\un_{\{H_0 \leq z\}}e^{\k V(e_1)}\alpha(z)\big|\leq C_M''$ a.s.
by \eqref{eq_sup_RW_2} and since $V(e_1)\leq 0$. So by the dominated convergence theorem,
$$
    \E\bigg[\un_{\{H_0 \leq z\}}e^{\k V(e_1)}\alpha(z)\bigg]
\to_{z\to+\infty, z\in\Gamma}
    C_M'\E\big[e^{\k V(e_1)}\big].
$$
This together with \eqref{Eq_Egalite_Iglehart} and \eqref{eq_sup_RWprime} gives
$    e^{\k z} \p\Big[H_0>z\Big]
\to_{z\to+\infty, z\in\Gamma}
    C_M'\big(1-\E\big[e^{\k V(e_1)}\big]\big)
=: C_I>0
$,
which ends the proof in the lattice and in the non-lattice case.
\end{proof}
Observe that, in the lattice case, \eqref{eq:Iglehart} is not true for $h\rightarrow\infty$ in $\mathbb R$ (it is even not true in $\frac 12\Gamma$) but that, in any case, under Hypotheses \eqref{UE}, \eqref{rho<0} and \eqref{ConditionA}, there exist $C_I^{(0)}>0$ and $C_I^{(1)}>0$ such that
\begin{equation}\label{Igelhart0}
    \forall h\in\mathbb R_+,
\quad
    C_I^{(0)} e^{-\k h}\le P(H_1> h)\le  P(H_1\ge h)\le C_I^{(1)} e^{-\k h}\, .
\end{equation}

\subsection{An estimate for the hitting time of the first valley}
Let us define the maximal increase of the potential between $0$ and $x$, and then between $x$ and $y$, by
\begin{eqnarray*}
    V^{\uparrow}(x)
& := &
    \max_{0\leq i \leq j \leq x}[V(j)-V(i)],
\qquad
    x\in\N,
\\
    V^{\uparrow}(x,y)
& := &
    \max_{x\leq i \leq j \leq y}[V(j)-V(i)],
\qquad
    x\leq y.
\end{eqnarray*}
We also introduce
\begin{eqnarray}
    T^{\uparrow}(h)
& := &
    \min\{x\geq 0,\ V^{\uparrow}(x)\geq h\},
\qquad
    h>0,
\nonumber\\
    m_1(h)
& := &
    \min\Big\{x\geq 0,\ V(x)=\min_{[0, T^{\uparrow}(h)]} V\Big\},
\qquad
    h>0,
\label{eq_def_m1}
\\
    {}^{\uparrow}T_h(y)
& := &
    \max\{x\in\Z,\, x\leq m_1(h), \ V(x)-V[m_1(h)] \geq y\},\, \quad\ h>0,\, y>0,~~~~
\label{eq_def_Tfleche_Gauche}
\\
    T_V(A)
& := &
    \min\{x\geq 1,\ V(x)\in A\}, \qquad A\subset \R,
\label{eq_Def_TV}
\\
    T_{V(-\cdot)}(A)
& := & 
    \min\{x\geq 1,\ V(-x)\in A\}, \qquad A\subset \R,
\label{eq_Def_TV-}
\end{eqnarray}
where we take the convention $\max\emptyset = -\infty$ in \eqref{eq_def_Tfleche_Gauche}.
\smallskip

The following inequality (the proof of which uses our assumptions and Proposition \ref{FactIglehart}) is useful to bound the expectation of the hitting time of the first valley of height $h$ for $(V(x), x\geq 0)$.
It is proved in \cite{ESZ2} when the distribution of $\log \rho_0$ is non-lattice and we prove that it remains true in the lattice case.

\begin{fact}(adapted from \cite[Lemma 4.9]{ESZ2})\label{FactTimeSpnetBeforeFirstValley}
Under hypotheses \eqref{UE}, \eqref{rho<0} and \eqref{ConditionA},
there exists $C_0>1$ such that
\begin{equation}\label{eq_Lemma_SEZ}
    \forall h>0,
\qquad
    \EE_{|0}\big[\tau\big(T^{\uparrow}(h)-1\big)\big]
\leq
    C_0 e^h,
\end{equation}
where $\EE_{|0}$ denotes the expectation under the annealed law
$\PP_{|0}$ of
the RWRE $(S_n)_n$ on $\mathbb N$ starting from $0$ and reflected at $0$
(that is, on the environment $\omega$ (under $\p$) with $\omega_0$ replaced by $1$).
\end{fact}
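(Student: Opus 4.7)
This lemma is \cite[Lemma 4.9]{ESZ2}, proved there under the non-lattice assumption on $\log\rho_0$; the goal is to verify that the argument carries over to the lattice case once the non-lattice Iglehart asymptotic is replaced by the uniform two-sided bound \eqref{Igelhart0} obtained via our Proposition \ref{FactIglehart}. I would therefore follow the ESZ strategy, modifying only the tail input for $H_0$.

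The starting point is the classical quenched formula for the hitting time of the walk reflected at $0$: since $\omega_0$ is replaced by $1$, one has $E_\omega^0[\tau(1)] = 1$ while $E_\omega^i[\tau(i+1)] = 1 + 2\sum_{k=0}^{i-1} e^{V(i)-V(k)}$ for $i \geq 1$, hence
\begin{equation*}
    E_\omega^0[\tau(N)]
=
    N + 2 \sum_{i=1}^{N-1} \sum_{k=0}^{i-1} e^{V(i) - V(k)}.
\end{equation*}
Applied at $N = T^\uparrow(h) - 1$ and averaged over $\omega$, the problem reduces to estimating the annealed expectation of the double sum. This sum is organized along the weak descending ladder epochs $(e_j)_j$ of $V$: setting $J(h) := \min\{j \ge 0 : H_j \ge h\}$, one has $T^\uparrow(h) \in [e_{J(h)}, e_{J(h)+1})$ and $H_j < h$ for $j < J(h)$. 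Since the excursions $E_j := V(e_j + \cdot) - V(e_j)$ are i.i.d., $J(h)$ is geometric with parameter $p_h := \p(H_0 \ge h)$, and the lower bound in \eqref{Igelhart0} yields $\EE[J(h)] \le C e^{\kappa h}$. Using the descending ladder property $V(e_j) \le V(e_{j'})$ for $j' \le j$, the inner sum for $i$ in excursion $j$ splits into a self-contained functional of $E_j$ plus a coupling term with earlier excursions controlled by $Y_j := \sum_{k < e_j} e^{V(e_j) - V(k)} \le e_j$.

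The core quantitative step is to bound, for a single full excursion, the annealed expectation of its contribution conditioned on $H_0 < h$; an integration by parts using the upper bound $\p(H_0 > t) \le C_I^{(1)} e^{-\kappa t}$ of \eqref{Igelhart0}, combined with the exponential moments of $e_1$ coming from the large deviation estimate \eqref{eq_Large_Dev_RW}, yields a bound of order $e^{(1-\kappa)h}$ (up to harmless polynomial factors in $h$ absorbed by a slight weakening of $\kappa$). A Wald-type identity for i.i.d.\ excursions then produces
\begin{equation*}
    \EE\Big[\sum_{j<J(h)}(\text{contribution}_j)\Big] = \frac{\EE\big[(\text{contribution}_0)\un_{H_0<h}\big]}{p_h} \le C e^{h}.
\end{equation*}
The terminal partial excursion $[e_{J(h)}, T^\uparrow(h) - 1]$ is handled directly, using $V(i) - V(k) < h$ for $i < T^\uparrow(h)$ (by definition of $T^\uparrow(h)$) together with polynomial moments of $e_{J(h)+1} - e_{J(h)}$ given $H_0 \ge h$ drawn from \eqref{eq_Large_Dev_RW}. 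The main and only obstacle is to verify that the ESZ argument nowhere requires the precise non-lattice asymptotic but only the uniform two-sided bounds of \eqref{Igelhart0}; this is immediate since all tail estimates are ultimately used only in the integration-by-parts step described above.
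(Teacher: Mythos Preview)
Your overall strategy---recasting the argument via the ladder structure and isolating the per-excursion bound $\E[J_0\mid H_0<h]\le Ce^{(1-\kappa)h}$ (with $J_0=\sum_{k=0}^{e_1}e^{V(k)}$) as the only step requiring care---is exactly the paper's strategy. The paper likewise observes that the ESZ proof up to their equation (4.15) uses only the two-sided bounds \eqref{Igelhart0}, and then supplies a fresh argument for the remaining per-excursion estimate.

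Where your sketch is shaky is precisely in that estimate. You propose ``integration by parts using $\p(H_0>t)\le C_I^{(1)}e^{-\kappa t}$ combined with exponential moments of $e_1$''. But the natural domination $J_0\le(e_1+1)e^{H_0}$ followed by H\"older only yields $\E[J_0\un_{\{H_0<h\}}]\le Ce^{(1-\kappa/q)h}$ for some $q>1$, which after the Wald step produces $Ce^{(1+\epsilon)h}$, not $C_0e^h$. Your parenthetical about ``polynomial factors in $h$ absorbed by a slight weakening of $\kappa$'' makes this worse, not better: any loss in the exponent is fatal, since it cannot be reabsorbed after multiplying by $1/p_h\le Ce^{\kappa h}$. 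The paper avoids this by \emph{not} bounding $J_0$ through $H_0$: it drops the restriction $k\le e_1$, slices according to the level $p\le V(k)<p+1$, uses $\p\big(T_V([p,p+1[)<\infty\big)\le\p\big(\sup_\N V\ge p\big)\le C_M''e^{-\kappa p}$ from \eqref{eq_sup_RW_2}, and applies the strong Markov property to bound the expected number of visits of $V$ to any unit interval by a fixed constant. This gives the sharp $O(e^{(1-\kappa)h})$ with no polynomial loss. So your outline correctly locates where the work lies, but the mechanism you indicate for the core inequality does not close the argument; the paper's level-set/occupation-time computation is what does.
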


\begin{proof}
Assume \eqref{UE}, \eqref{rho<0} and \eqref{ConditionA}. If the distribution of $\log \rho_0$ is non-lattice,
\eqref{eq_Lemma_SEZ} is proved in \cite[Lemma 4.9]{ESZ2} for large $h$ and then for all $h>0$ up to a change of
the value of $C_0$, which we can choose to be greater than $1$.

Notice that at least up to their equation (4.15),
the proof of \cite[Lemma 4.9]{ESZ2} does not use
$P(H_1> h)\sim_{h\to+\infty} C_I e^{-\k h}$
but only uses \eqref{Igelhart0} instead of it.
Since \eqref{Igelhart0} is true when the distribution of $\log \rho_0$ is lattice,
the proof of \cite[Lemma 4.9]{ESZ2}, which does not use the non-lattice hypothesis anywhere else,
remains valid in the lattice case at least up to their equation (4.15).

So it only remains to prove that $\E[J_0 |H_0<h]\leq C e^{(1-\k)h}$ for some $C>0$ for large $h$,
where $J_0:=\sum_{k=0}^{e_1} e^{V(k)}$.
We simplify the rest of the proof as follows.
For $x\in\R$, we denote by $\lfloor x\rfloor$ the integer part of $x$.
Since $\p[H_0<h]\to 1$ when $h\to+\infty$
and $0\leq V(k)\leq h$ for $0\leq k \leq e_1$ on $\{H_0<h\}$, we have for large $h$,
\begin{eqnarray*}
    \E\big[J_0 |H_0<h\big]
& \leq &
    \E\bigg[\bigg(\sum_{k=0}^\infty e^{V(k)}\un_{\{0\leq V(k)\leq h\}}\bigg)\frac{\un_{\{H_0<h\}}}{\p[H_0<h]}\bigg]
\\
& \leq &
    2\E\bigg[\sum_{k=0}^\infty e^{V(k)}\sum_{p=0}^{\lfloor h\rfloor}\un_{\{p\leq V(k)<p+1\}}\un_{\{H_0<h\}}\bigg]
\\
& \leq &
    2\E\bigg[\sum_{k=0}^\infty \sum_{p=0}^{\lfloor h\rfloor} e^{p+1}\un_{\{p\leq V(k)<p+1\}}\bigg]
\\
& = &
    2\sum_{p=0}^{\lfloor h\rfloor} e^{p+1}\E\bigg[\un_{\{T_V([p,p+1[)<\infty\}}\sum_{k=0}^\infty \un_{\{p\leq V(k)<p+1\}}\bigg]
\\
& = &
    2\sum_{p=0}^{\lfloor h\rfloor} e^{p+1}
    \E\bigg[\un_{\{T_V([p,p+1[)<\infty\}}\E\bigg(\sum_{k=0}^\infty \un_{\{p\leq V(k)+x<p+1\}}\bigg)\bigg|_{x=V\left(T_V([p,p+1[)\right)}\bigg]
\end{eqnarray*}
by the strong Markov property. Hence for large $h$,  
\begin{equation*}
    \E\big[J_0 |H_0<h\big]
\leq
    2\sum_{p=0}^{\lfloor h\rfloor} e^{p+1}
    \p\Big[T_V([p,p+1[)<\infty\Big]
    \E\bigg(\sum_{k=0}^\infty \un_{\{-1\leq V(k)\leq 1\}}\bigg).
\end{equation*}
But the expectation in the right hand side of the previous line is finite because $V$ is a random walk transient to $-\infty$,
whereas
$
    \p\big[T_V([p,p+1[)<\infty\big]
\leq
    \p\big[\sup_{\N} V\geq p\big]
\leq
    C_M'' e^{-\k p}
$
by \eqref{eq_sup_RW_2}, so there exists $C>0$ such that for large $h$,
\begin{equation*}
    \E\big[J_0 |H_0<h\big]
\leq
    C\sum_{p=0}^{\lfloor h\rfloor} e^{(1-\k)p}
=
    O\big(e^{(1-\k)h}\big)
\end{equation*}
as $h\to+\infty$ since $0<\k<1$.
This ends the proof of \eqref{eq_Lemma_SEZ} also when $\log \rho_0$ has a lattice distribution, for large $h$ and then for all $h>0$ up to a change of constant.
So Fact \ref{FactTimeSpnetBeforeFirstValley} is proved in both cases.
\end{proof}

\subsection{An estimate for the invariant measure}
The following lemma will be useful to control the invariant probability measure of a RWRE reflected inside our valleys,
and to show that the invariant measure at the bottom of some of these valleys (introduced below in Proposition \ref{Prop_i_n})
is uniformly large.

\begin{lem}\label{LemmaConstantInvariantMeasure}
Under hypotheses \eqref{UE}, \eqref{rho<0} and \eqref{ConditionA},
there exist constants $C_{2}>0$ and $h_0>0$ such that for every $h>h_0$,
\begin{equation}\label{eqLemmaConstantInvariantMeasure1}
    \E\Bigg[\sum_{m_1(h)\leq x \leq T^{\uparrow}(h)} e^{-(V(x)-V(m_1(h))} \Bigg]
\leq
    C_{2}
\end{equation}
and
\begin{equation}\label{eqLemmaConstantInvariantMeasure2}
    \E\Bigg[\sum_{{}^{\uparrow}T_h(h)\leq x<m_1(h)} e^{-(V(x)-V(m_1(h))}
\ \bigg |\,  {}^{\uparrow}T_h(h)\ge 0\Bigg]
\leq
    C_{2}.
\end{equation}
\end{lem}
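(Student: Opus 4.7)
The plan is to treat the two inequalities separately, exploiting the asymmetry between the right slope of the valley (where $V$ has negative drift under $\p$, so reaching height $h$ is rare and an Esscher tilt is needed) and the left slope (where the reversed walk already has positive drift under $\p$, so no tilt is needed).

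For \eqref{eqLemmaConstantInvariantMeasure1}, I would first reduce to a generic excursion. By the i.i.d.\ structure of the excursions $(V(e_i+k)-V(e_i))_{0\le k\le e_{i+1}-e_i}$, setting $I:=\min\{i\ge 0:H_i\ge h\}$ gives $m_1(h)=e_I$ and $T^\uparrow(h)-e_I=\tau_h^+$ where $\tau_h^+:=\inf\{k\ge 0:V(k)\ge h\}$; integrating out $I$ and using $\{H_0\ge h\}=\{\tau_h^+<e_1\}$ (since $V>0$ on $(0,e_1)$ while $V(e_1)\le 0<h$), the left-hand side of \eqref{eqLemmaConstantInvariantMeasure1} rewrites as
\[
\E\Big[\sum_{k=0}^{\tau_h^+}e^{-V(k)}\,\Big|\,H_0\ge h\Big]=\frac{\E\big[\sum_{k=0}^{\tau_h^+}e^{-V(k)}\un_{\{\tau_h^+<e_1\}}\big]}{\p(H_0\ge h)}.
\]
The denominator is bounded below by $C_I^{(0)}e^{-\kappa h}$ by \eqref{Igelhart0}. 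For the numerator I would introduce the Esscher probability $\p^*$ with $d\p^*/d\p=\rho_0^\kappa$ on each $\omega_k$: by \eqref{ConditionA} this defines a probability, and convexity of $\theta\mapsto\E[\rho_0^\theta]$ on $[0,\kappa]$ combined with \eqref{rho<0} ensures that under $\p^*$ the walk $V$ has i.i.d.\ increments of strictly positive mean $M'(\kappa)>0$, where $M(\theta):=\E[\rho_0^\theta]$; in particular $\tau_h^+<\infty$ $\p^*$-a.s. Changing measure at $\tau_h^+$ together with $V(\tau_h^+)\ge h$ yields
$
\E\big[\sum_{k=0}^{\tau_h^+}e^{-V(k)}\un_{\{\tau_h^+<e_1\}}\big]\le e^{-\kappa h}\,\E^*\big[\sum_{k=0}^{\tau_h^+}e^{-V(k)}\un_{\{\tau_h^+<e_1\}}\big],
$
so the $e^{-\kappa h}$ cancels with the denominator and it remains to bound the last expectation uniformly in $h$. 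Since $\{\tau_h^+<e_1\}$ forces $V(k)>0$ for all $1\le k\le\tau_h^+$, the sum is dominated by $\sum_{k=0}^{e_1-1}e^{-V(k)}$, and partitioning by level sets,
\[
\E^*\Big[\sum_{k=0}^{e_1-1}e^{-V(k)}\Big]\le\sum_{\ell\ge 0}e^{-\ell}\,\nu_+^*(\ell),\qquad\nu_+^*(\ell):=\E^*\big[\#\{0\le k<e_1:V(k)\in[\ell,\ell+1)\}\big],
\]
and $\nu_+^*(\ell)$ (the renewal measure of $V$ under $\p^*$ killed on entering $(-\infty,0]$) is uniformly bounded in $\ell\ge 0$ thanks to the positive drift of $V$ under $\p^*$, by the renewal theorem applied to the strict ascending ladder-height process (equivalently, Doob's $h$-transform with harmonic function $x\mapsto\p^*_x(V\text{ stays positive})$ realizes $V$ conditioned to stay positive as a transient Markov chain on $\N$ with strictly positive drift).

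For \eqref{eqLemmaConstantInvariantMeasure2}, I would parameterize the left slope by $\tilde V(k):=V(m_1(h)-k)-V(m_1(h))$ for $0\le k\le N_l:=m_1(h)-{}^\uparrow T_h(h)$; its increments are i.i.d.\ of law $-\log\rho_0$, with strictly positive mean $-\E[\log\rho_0]>0$ under $\p$ by \eqref{rho<0}. On $\{{}^\uparrow T_h(h)\ge 0\}$ the leftmost-minimum property of $m_1(h)$ forces $\tilde V(k)>0$ for $1\le k\le N_l$. Since $\tilde V$ already has positive drift under $\p$, the same renewal argument (applied directly to $\tilde V$, with no Esscher tilt required) gives an unconditional bound on the left-slope sum by a constant; division by $\p({}^\uparrow T_h(h)\ge 0)$, which tends to $1$ as $h\to\infty$ (since $m_1(h)$ is of order $e^{\kappa h}$, much larger than the typical length $\sim h/(-\E[\log\rho_0])$ of the left slope), concludes.

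The \emph{main obstacle} is the uniform bound on the renewal measure $\nu_+^*(\ell)$: it requires that the strict ascending ladder-height distribution of $V$ under $\p^*$ (and analogously of $\tilde V$ under $\p$) has a bounded renewal density, which in the lattice case demands working with level sets $[\ell,\ell+1)$ rather than individual atoms, paralleling the treatment in the proof of Proposition~\ref{FactIglehart}. A subsidiary technicality is the non-stopping-time nature of $m_1(h)=e_I$, which for both slopes is handled by conditioning on the excursion index $I$ and invoking the i.i.d.\ excursion decomposition.
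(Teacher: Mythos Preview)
Your approach for \eqref{eqLemmaConstantInvariantMeasure1} via the Esscher tilt is correct and genuinely different from the paper's. The paper instead works directly with the conditional law furnished by Proposition~\ref{LemmaTanakaStyleForRW} and, for each summand index $x$, splits according to whether $V(x)\ge\sqrt{x}$ (trivial bound $e^{-\sqrt{x}}$) or $V(x)<\sqrt{x}$; in the latter case it applies the Markov property at time $x$, bounds the continuation probability by $\p[\sup V\ge h-\sqrt{x}]\le C_M''e^{-\kappa(h-\sqrt{x})}$ via \eqref{eq_sup_RW_2}, so that the factor $e^{\kappa h}$ from the denominator $\p[H_0\ge h]^{-1}$ cancels, while $\p[V(x)\ge 0]\le e^{-xI(0)}$ from \eqref{eq_Large_Dev_RW} supplies summability in $x$. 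Your Esscher route is more conceptual---the cancellation of $e^{\kappa h}$ is automatic and the problem reduces to a clean renewal estimate under the tilted law---whereas the paper's splitting is more elementary and avoids introducing a second probability measure.

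Two technical points need attention. First, the identification $m_1(h)=e_I$ with \emph{weak} ladder epochs is not quite right: in the lattice case ties $V(e_{I-1})=V(e_I)$ can occur, and then the first minimum precedes $e_I$. The paper works with \emph{strict} descending ladder epochs (see the proof of Proposition~\ref{LemmaTanakaStyleForRW}), yielding the conditioning event $\{T_V([h,\infty[)<T_V(]-\infty,0[)\}$ rather than your $\{\tau_h^+<e_1\}=\{T_V([h,\infty[)<T_V(]-\infty,0])\}$. This is easily repaired and does not affect your Esscher argument.

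Second, and more substantively, your treatment of \eqref{eqLemmaConstantInvariantMeasure2} asserts that $\tilde V(k)=V(m_1(h)-k)-V(m_1(h))$ has i.i.d.\ increments of law $-\log\rho_0$. This is false as stated: $m_1(h)$ is not a stopping time for the reversed filtration, so the increments $-\log\rho_{m_1(h)-k+1}$ are not unconditionally i.i.d. Your proposed fix (conditioning on the excursion index $I$) is the right idea, but carrying it out is precisely the content of Proposition~\ref{Lemma_Egalite_Loi_Gauche_de_m1}, which the paper proves separately and then invokes. Once that identification is in place, the paper's bound for the left slope is exactly your renewal-by-level-sets argument (drop the conditioning on reaching height $h$, partition into strips $[p,p+1)$, and use transience of $V(-\cdot)$ to $+\infty$ to bound the expected occupation of each strip). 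So on the left side your plan and the paper's coincide---but the reduction step is not a ``subsidiary technicality''; it is a genuine lemma.
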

\begin{proof}
Recall that $\omega_0$ is non degenerate, which together with \eqref{ConditionA} gives
$\p[\log \rho_0>0]>0$ and $\p[\log \rho_0<0]>0$. Also $\lim_{x\to+\infty}V(x)=-\infty$ by \eqref{rho<0},
so we can use Propositions \ref{Lemma_Egalite_Loi_Gauche_de_m1} and \ref{LemmaTanakaStyleForRW}.
Since $(V(x+m_1(h))-V(m_1(h)), \ 0\leq x \leq T^{\uparrow}(h)-m_1(h))$
is equal in law to
$(V(x),\, 0\leq x\leq T_V([h,+\infty[)\, |\, T_V([h,+\infty[)<T_V(]-\infty,0[))$ by Proposition \ref{LemmaTanakaStyleForRW},
we have,
\begin{eqnarray}
&&
    \E\bigg(\sum_{m_1(h)\leq x \leq T^{\uparrow}(h)} e^{-(V(x)-V(m_1(h))} \bigg)
\nonumber\\
& = &
    \E\bigg(\sum_{0\leq x \leq T_V([h,+\infty[)} e^{-V(x)} \Big|T_V([h,+\infty[)<T_V(]-\infty,0[)\bigg)
\nonumber\\
& = &
    \sum_{x=0}^\infty \E\left(\un_{\{x \leq T_V([h,+\infty[)\}} e^{-V(x)} \Big|T_V([h,+\infty[)<T_V(]-\infty,0[)\right).
\label{Ineg_1_Proof3}
\end{eqnarray}
Note that $(V(x))_{x\in\N}$ is a Markov chain starting from $0$. We will write $\p^y$ for the law of this Markov chain starting from $y\in\R$, which is the law of  $(y+V(x))_{x\in\N}$ (we will use this notation in \eqref{Ineg_Proof3_b} below with $y= V(x)$).
We first notice that for all $x>0$ and $h>0$,
\begin{equation}\label{Ineg_Proof3_a}
    \E\left(\un_{\{x \leq T_V([h,+\infty[)\}} e^{-V(x)} \un_{\{V(x)\geq \sqrt{x}\}}
    \Big|T_V([h,+\infty[)<T_V(]-\infty,0[)\right)
\leq
    e^{-\sqrt{x}}.
\end{equation}
Let $h_0:= 8\k /I(0)$, where $I(0)>0$ is introduced in \eqref{eq_Large_Dev_RW}.
Hence, using $e^{-V(x)}\leq 1$ and
$
    \p\big[T_V([h,+\infty[)<T_V(]-\infty,0[)\big]
\geq
    \p\big[T_V([h,+\infty[)<T_V(]-\infty,0])\big]
\geq
    C_I^{(0)} e^{-\k h}
$
(see \eqref{Igelhart0}) in the first inequality,
then, due to the Markov property at $x$,
we have for all $h>0$ and $x\geq 0$,
\begin{eqnarray}
&&
    \E\left(\un_{\{x \leq T_V([h,+\infty[)\}} e^{-V(x)} \un_{\{V(x)< \sqrt{x}\}}
    \Big|T_V([h,+\infty[)<T_V(]-\infty,0[)\right)
\nonumber\\
& \leq &
    [C_I^{(0)}]^{-1}e^{\k h}\E\left(\un_{\{x \leq T_V([h,+\infty[)\}} \un_{\{V(x)< \sqrt{x}\}}
    \un_{\{T_V([h,+\infty[)<T_V(]-\infty,0[)\}}\right)
\nonumber\\
& = &
    \frac{e^{\k h}}{C_I^{(0)}}\E\left(\un_{\{x \leq T_V([h,+\infty[), \ V(x)< \sqrt{x}, \ x<T_V(]-\infty,0[)\}}
    \p^{V(x)}[T_V([h,+\infty[)<T_V(]-\infty,0[)]\right)
\nonumber\\
& \leq &
    [C_I^{(0)}]^{-1}e^{\k h}\E\left(\un_{\{V(x)\geq 0\}}
        \p\left[\sup_{y\geq 0}V(y)\geq h-\sqrt{x}\right]\right)
\nonumber\\
& \leq &
    [C_I^{(0)}]^{-1}e^{\k h}\p[V(x)\geq 0] C_M'' e^{-\k (h-\sqrt{x})}
\nonumber\\
& \leq &
    [C_I^{(0)}]^{-1}e^{-xI(0)} C_M'' e^{\k \sqrt{x}},
\label{Ineg_Proof3_b}
\end{eqnarray}
where we used \eqref{eq_sup_RW_2} and \eqref{eq_Large_Dev_RW} in the last two inequalities.

Finally, using \eqref{Ineg_1_Proof3} and summing
\eqref{Ineg_Proof3_a} and  \eqref{Ineg_Proof3_b}
over $x$, we get for every $h>h_0$,
\begin{equation}
    \E\bigg(\sum_{m_1(h)\leq x \leq T^{\uparrow}(h)} e^{-(V(x)-V(m_1(h))} \bigg)
\leq
    \sum_{x\geq 0} \bigg(e^{-\sqrt{x}}
    +
    \frac{C_M ''}{C_I^{(0)}}e^{\k \sqrt{x}-xI(0)}\bigg)
=:
    C_{2}<\infty
\end{equation}
since $I(0)>0$, which proves \eqref{eqLemmaConstantInvariantMeasure1} for all $h>h_0$.

We now turn to \eqref{eqLemmaConstantInvariantMeasure2}.
Recall $V(-\cdot)$ and $T_{V(-\cdot)}$ from \eqref{defVminus} and \eqref{eq_Def_TV-}.
Due to Proposition \ref{Lemma_Egalite_Loi_Gauche_de_m1},
the left hand side of \eqref{eqLemmaConstantInvariantMeasure2} is equal to
\begin{eqnarray*}
&&
    \E\bigg[\sum_{0<x\leq T_{V(-\cdot)}([h,+\infty[)}e^{-V(-x)}
    \Big| T_{V(-\cdot)}([h,+\infty[)<T_{V(-\cdot)}(]-\infty,0])\bigg]
\\
& \leq &
    \E\bigg[\sum_{x\geq 0}e^{-V(-x)}\un_{\{V(-x)\geq 0\}}
    \frac{\un_{\{T_{V(-\cdot)}([h,+\infty[)<T_{V(-\cdot)}(]-\infty,0])\}}}{\p[T_{V(-\cdot)}([h,+\infty[)<T_{V(-\cdot)}(]-\infty,0])]}
    \bigg]
\\
& \leq &
    \E\bigg[\sum_{x\geq 0}\sum_{p=0}^{\infty}e^{-V(-x)}\un_{\{p\leq V(-x) <p+1\}}
    \bigg]
    \frac{1}{\p[\inf_{\N^*}V(-\cdot)> 0]}
\\
& \leq &
    \sum_{p=0}^{\infty}e^{-p}
    \E\bigg[\un_{\{T_{V(-\cdot)}([p,p+1[)<\infty\}}\sum_{x\geq 0}\un_{\{p\leq V(-x) <p+1\}}
    \bigg]
    \frac{1}{\p[\inf_{\N^*}V(-\cdot)> 0]}
\\
& \leq &
    \sum_{p=0}^{\infty}e^{-p}
    \E\bigg(\sum_{x\geq 0}\un_{\{-1\leq V(-x)\leq 1\}}\bigg)
    \frac{1}{\p[\inf_{\N^*}V(-\cdot)> 0]}
=:C_3<\infty,
\end{eqnarray*}
due to the strong Markov property in the last line, and since $V(-\cdot)$ is a random walk transient to $+\infty$,
and so the expectation in the previous line is finite
and $\p[\inf_{\N^*}V(-\cdot)> 0]=\p[\sup_{\N^*}V< 0]>0$.
This proves \eqref{eqLemmaConstantInvariantMeasure2} and then the lemma, up to a change of constants.
\end{proof}

\section{Construction of the very deep valleys}\label{Sect_Valleys}
We fix some $\varepsilon\in]0,\frac{1-\kappa}{2\kappa}[$, which is possible since $0<\k<1$ by \eqref{ConditionA}.
We first build a succession of very deep valleys for the potential, with probability $1$. To this end, we set for $i\geq 1$, recalling $C_0$ from Fact \ref{FactTimeSpnetBeforeFirstValley},
\begin{equation}\label{eq_def_N_i}
    N_i
:=
    \big\lfloor C_0 i^{1+\varepsilon}(i!)^{\frac {1+\varepsilon}\kappa}\big\rfloor\, ,
\end{equation}
\begin{equation}\label{eq_def_f_i}
    f_i
:=
    \log (N_i/C_0)-(1+\varepsilon)\log i
\sim_{i\to+\infty}
    \frac{1+\varepsilon}\kappa\log (i!)
\sim_{i\to+\infty}
    \frac{1+\varepsilon}\kappa i \log i\,  .
\end{equation}
We now define (recall that $H_k$ and $e_k$ were respectively defined in \eqref{eq_def_Hi} and \eqref{eq_def_ei}),
$$
    \sigma(0)
:=
    -1,
\qquad
    \sigma(i)
:=
    \inf\{k > \sigma(i-1)\ :\ H_k\geq f_i\},
\qquad
    i>0.
$$
Notice that $\p$-almost surely, $\sigma(i)<\infty$ for all $i\in\N$, since the $H_k$, $k\in\N$ are i.i.d.
and thanks to \eqref{eq:Iglehart}.
We also introduce for $i\geq 1$ (see Figure \ref{figure2_Potential_Valley_Transient}),
\begin{eqnarray*}
    b_i
& := &
    e_{\sigma(i)},
\\
    a_i
& := &\max\left(e_{\sigma(i-1)+1},
    \sup\{k< b_i\, : \, V(k) \geq V(b_i)+f_i +z_i\}\right),
\\
    \alpha_i
& := &
    \max\left(e_{\sigma(i-1)+1},
    \sup\{k< b_i\, : \, V(k) \geq V(b_i)+f_i/2\}\right),
\\
    \gamma_i
& := &
    \inf\{k> b_i\, :\, V(k) \geq V(b_i)+f_i/2\}
    \leq e_{\sigma(i)+1}-1,
\\
    c_i
& := &\min\left(e_{\sigma(i)+1}-1,
\inf\{k> b_i\, :\, V(k) \geq V(b_i)+f_i +z_i\}\right)
\, ,
\end{eqnarray*}
with $z_i:=(\log i)/\kappa$.
We call $[a_i, c_i]$ the {\it valley} number $i$.
Its {\it bottom} is $b_i$, and we call $H_{\sigma(i)}$ its {\it height}.
We also define for $i\geq 1$,
\begin{eqnarray*}
\beta_i^-
& := &
    \sup\{k< b_i\, : \, V(k) \geq V(b_i)+f_i \},
\\
    \beta_i^+
& := &
    \inf\{k> b_i\, :\, \ V(k) \geq V(b_i)+f_i \}.
\end{eqnarray*}

\begin{figure}[htbp]
\includegraphics[scale=1.05]{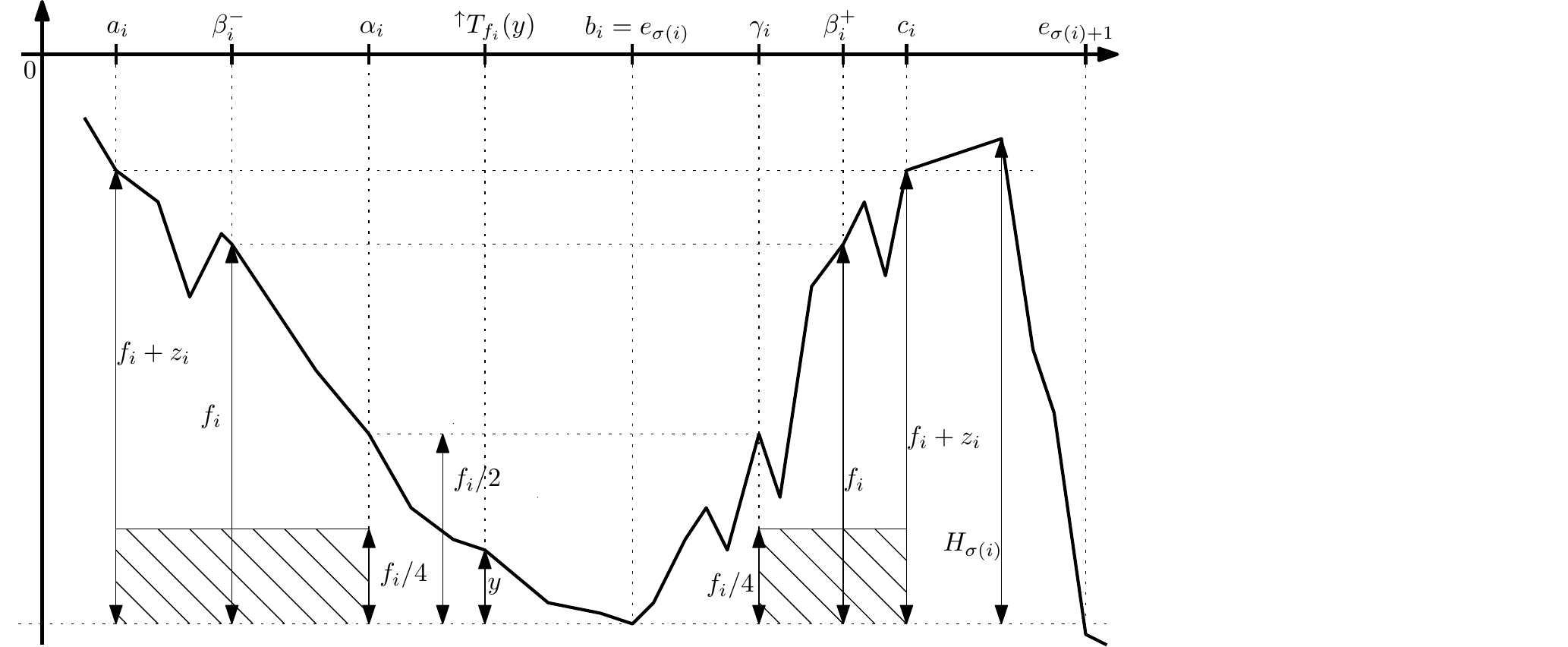}
\caption{Pattern of the potential $V$ for a very deep valley for $i=i(n)$, $\omega\in\widetilde \Omega$ and representation of different quantities.
}
\label{figure2_Potential_Valley_Transient}
\end{figure}

\begin{remark}\label{indep}
Since the r.v. $e_{\sigma(i)+1}$, $i\geq 1$,  are stopping times with respect to the $\sigma$-field
$\big(\sigma(V(1),...,V(n)), \, n\ge 0\big)$, the random variables
$$
    Y_i
:=
    \left(V(z+e_{\sigma(i)})-V(e_{\sigma(i)});
    \ z=e_{\sigma(i-1)+1}-e_{\sigma(i)},...,e_{\sigma(i)+1}-e_{\sigma(i)}\right),
\qquad
    i\geq 1,
$$
are mutually independent by the strong Markov property.
\end{remark}
\begin{rmk}\label{expo<1}
Since the derivative of $x\mapsto\E[e^{x V(1)}]$ at $0$ is $\E[V(1)]<0$
thanks to \eqref{UE}, \eqref{rho<0} and \eqref{ConditionA},
there exists $\kappa_0\in]0,\kappa[$ such that $v_0:=\E[\rho_0^{\kappa_0}]=\E\big[e^{\kappa_0 V(1)}\big]<1$.
\end{rmk}
Let $C_4>\frac{2(\kappa+\k_0)}{|\log v_0|}$.
For every $i\ge 1$, we consider the following sets
\begin{eqnarray*}
    \Omega_1^{(i)}
& := &
    \big\{a_i=\sup\{k< b_i\, :\, V(k) \geq V(b_i)+f_i +z_i\}\big\},
\\
    \Omega_2^{(i)}
& := &
    \left\{b_i\le T^{\uparrow}(f_i)-1\le i\, e^{\kappa f_i}\right\},
\\
    \Omega_3^{(i)}
& := &
    \left\{c_i-a_i\le C_4(f_i+z_i)\right\},
\\
    \Omega_4^{(i)}
& := &
    \left\{\inf_{]a_i,c_i[\setminus ]\alpha_i,\gamma_i[}V> V(b_i)+\frac{f_i}4\right\},
\\
    \Omega_5^{(i)}
& := &
    \left\{\sum_{k=\alpha_i}^{\gamma_i}e^{-(V(k)-V(b_i))}<7C_{2}\right\},
\\
    \Omega_6^{(i)}
& := &
    \left\{H_{\sigma(i)}\ge f_i+z_i\right\} =\left\{c_i=\inf\{k> b_i\, :\, V(k) \geq V(b_i)+f_i +z_i\}\right\}.
\end{eqnarray*}
In particular, the set $\Omega_1^{(i)}$ ensures that the valley $[a_i, c_i]$ of bottom $b_i$ is well separated
from the previous valleys $[a_j, c_j]$, $j<i$,
which we will use for example in Lemma \ref{LemmaStayingInside}.
Also, on $\Omega_2^{(i)}$, the heights of the previous valleys are less than $f_i$,
whereas on $\Omega_6^{(i)}$, the height of the valley $[a_i, c_i]$ is greater than or equal to $f_i+z_i$.
So the valley $[a_i, c_i]$ is significantly higher than the previous ones on $\Omega_2^{(i)}\cap \Omega_6^{(i)}$,
which will be used to get \eqref{Premieremontee}, itself necessary to prove Lemma \ref{LemmaHittingTimeVeryDeepValleys}.
Also, $\Omega_3^{(i)}$ guarantees that the length $c_i-a_i$ of the valley $[a_i, c_i]$ is not too big.
Finally, $\Omega_3^{(i)}$, $\Omega_4^{(i)}$ and $\Omega_5^{(i)}$ help to ensure that the invariant probability measure of a RWRE reflected in the valley $[a_i, c_i]$
is greater than a positive constant at location $b_i$ (which is necessary in Lemma \ref{MINO1}),
and $\Omega_4^{(i)}$ will be useful in Lemma \ref{tempscouplage} to ensure that the coupling
in Section \ref{SubSecEndofSketch} happens quickly, which is itself necessary in Lemma \ref{MINO1}.
We now estimate the probability of these events.

\begin{prop} \label{BorelCantelli1}
We have
$$
    \sum_{j=1}^3\sum_{i\ge 1}\p\left[ \left(\Omega_j^{(i)}\right)^c\right]<\infty,
\qquad
    \sum_{i\ge 1}\p\left[ \left(\Omega_4^{(i)}\right)^c \cap \Omega_6^{(i)}\right]<\infty\,
.$$
\end{prop}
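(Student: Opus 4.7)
The starting point is Remark \ref{indep}: the random variables $Y_i$ describing the $i$-th block of the potential are mutually independent, and the events $\Omega_1^{(i)},\Omega_3^{(i)},\Omega_4^{(i)},\Omega_6^{(i)}$ are measurable functions of $Y_i$ alone. By the strong Markov property of $V$ at $e_{\sigma(i-1)+1}$, the analysis of each of these events reduces to an analogous event for a fresh copy $W$ of $(V(k))_{k\ge 0}$, in which $\tilde\sigma:=\inf\{\ell\ge 0:H_\ell(W)\ge f_i\}$ is geometric with parameter $p_i\asymp e^{-\kappa f_i}$ by \eqref{Igelhart0}. The three main tools are Iglehart's tail \eqref{eq:Iglehart}--\eqref{Igelhart0}, the Cram\'er estimate \eqref{eq_sup_RW_2}, and the large-deviation bound \eqref{eq_Large_Dev_RW}, supplemented on the left-hand side of each valley by the identities in law of Section \ref{Sect_RW_conditionned}.

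For $\Omega_2^{(i)}$ the two inequalities are handled separately. The inequality $b_i\le T^{\uparrow}(f_i)-1$ is equivalent to $V^{\uparrow}(b_i)<f_i$; by the ladder decomposition this fails only if $H_{\sigma(j)}\ge f_i$ for some $j\le i-1$. Iglehart then gives $\p(H_{\sigma(j)}\ge f_i)=\p(H_0\ge f_i\mid H_0\ge f_j)\le (C_I^{(1)}/C_I^{(0)})\,e^{-\kappa(f_i-f_j)}$, and using $e^{\kappa f_j}\asymp(j!)^{1+\varepsilon}$ the telescoping $e^{-\kappa f_i}\sum_{j<i}e^{\kappa f_j}\asymp i^{-1-\varepsilon}$ yields $\p(V^{\uparrow}(b_i)\ge f_i)\lesssim i^{-1-\varepsilon}$, summable. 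The second inequality $T^{\uparrow}(f_i)-1\le ie^{\kappa f_i}$ I would treat by Chebyshev: bound $T^{\uparrow}(f_i)\le e_{\sigma^*+1}$ with $\sigma^*:=\inf\{k:H_k\ge f_i\}$ geometric of parameter $p_i$; then Wald's identities combined with $\E[e_1^2]<\infty$ (which follows from \eqref{eq_Large_Dev_RW}) give $\var(e_{\sigma^*+1})=O(p_i^{-2})=O(e^{2\kappa f_i})$, hence $\p(e_{\sigma^*+1}>ie^{\kappa f_i})=O(1/i^2)$.

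For $\Omega_1^{(i)}$ and $\Omega_3^{(i)}$ I would work with the fresh walk $W$. The argmax of $W$ on its very first excursion (where $W$ reaches height $H_0\ge 0$) yields the deterministic bound
\[
\max_{0\le\ell<\tilde b}\bigl(W(\ell)-W(\tilde b)\bigr)\ \ge\ H_0+\bigl(-W(\tilde b)\bigr),
\]
so $\Omega_1^{(i)}$ is implied by $-W(\tilde b)\ge f_i+z_i$. Since $W(\tilde b)=\sum_{\ell<\tilde\sigma}\xi_\ell$ is a sum of $\tilde\sigma$ i.i.d. weak-descending overshoots, each of mean $\mu'_i\to\mu'<0$ as $i\to\infty$, splitting on $\{\tilde\sigma<M\}$ with $M=2(f_i+z_i)/|\mu'|$ (controlled by $Mp_i\lesssim f_ie^{-\kappa f_i}$) and on its complement (controlled by a Chernoff bound for the conditioned sum) yields $\p((\Omega_1^{(i)})^c)\le e^{-\delta f_i}$ for some $\delta>0$. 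For $\Omega_3^{(i)}$, by \eqref{eq_Large_Dev_RW} the time for $V$ to rise by height $h$ is ballistically of order $h$; applied with $h=f_i+z_i$, the probabilities that $c_i-b_i$ and $b_i-a_i$ exceed $C_4(f_i+z_i)/2$ are each bounded by $e^{-\delta f_i}$, the left-side bound using Proposition \ref{Lemma_Egalite_Loi_Gauche_de_m1}.

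Finally, for $\Omega_4^{(i)}\cap\Omega_6^{(i)}$ the extra factor $\p(\Omega_6^{(i)})=\p(H_0\ge f_i+z_i\mid H_0\ge f_i)\lesssim e^{-\kappa z_i}=1/i$ comes for free from Iglehart. By the strong Markov property at $\gamma_i$ on the right side, and by the identity in law of Proposition \ref{Lemma_Egalite_Loi_Gauche_de_m1} (combined with Proposition \ref{LemmaTanakaStyleForRW}) on the left, the conditional probability that $V$ dips below $V(b_i)+f_i/4$ while climbing from height $f_i/2$ to height $f_i+z_i$ above $V(b_i)$ is controlled by the Cram\'er ratio
\[
\frac{\p(\sup_{\N}V\ge 3f_i/4+z_i)}{\p(\sup_{\N}V\ge f_i/2+z_i)}\ \lesssim\ e^{-\kappa f_i/4}
\]
via \eqref{eq_sup_RW_2}. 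Multiplying gives $\p((\Omega_4^{(i)})^c\cap\Omega_6^{(i)})\lesssim e^{-\kappa f_i/4}/i$, summable in $i$. The main technical obstacle I anticipate is the rigorous transfer of these Cram\'er- and Chernoff-type estimates from the right half of the valley to the segment $]a_i,b_i[$: both the $\Omega_1^{(i)}$ lower bound and the left part of $\Omega_4^{(i)}$ rely on Proposition \ref{Lemma_Egalite_Loi_Gauche_de_m1} to identify the reversed potential around $b_i$ with a version of the walk $V(-\cdot)$ conditioned on a suitable event, together with Tanaka-style manipulations as in Proposition \ref{LemmaTanakaStyleForRW}.
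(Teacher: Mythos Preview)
Your treatment of $\Omega_2^{(i)}$ matches the paper's almost exactly. For $\Omega_1^{(i)}$ and $\Omega_3^{(i)}$ your route is different but sound in outline: the paper does not use the geometric structure of $\tilde\sigma$ for $\Omega_1^{(i)}$ but simply bounds
\[
\p\big[(\Omega_1^{(i)})^c\big]\le \p\big[V(m_1(f_i))>-(f_i+z_i)\big]
\]
and splits according to whether $m_1(f_i)$ or $T_V(]-\infty,-f_i-z_i])$ is reached first, each piece being controlled by an elementary exponential Markov bound with the auxiliary exponent $\kappa_0$ of Remark~\ref{expo<1}. For $\Omega_3^{(i)}$ the paper likewise avoids Propositions~\ref{Lemma_Egalite_Loi_Gauche_de_m1}--\ref{LemmaTanakaStyleForRW} entirely: it observes that on $(\Omega_3^{(i)})^c$ one has $V(a_i+\lfloor C_4(f_i+z_i)\rfloor)\ge V(a_i+1)-f_i-z_i$, then intersects with $\Omega_2^{(i)}$, \emph{sums over the at most $K_i+1=O(ie^{\kappa f_i})$ possible positions of $a_i+1$}, and applies the large-deviation bound with $\kappa_0$ at each fixed position.

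The substantive gap is in your handling of $\Omega_4^{(i)}$. You invoke ``the strong Markov property at $\gamma_i$'', but $\gamma_i$ is \emph{not} a stopping time for $V$: it is the first time after $b_i=e_{\sigma(i)}$ that $V$ reaches $V(b_i)+f_i/2$, and $b_i$ itself is only revealed when the excursion has climbed to height $f_i$, i.e.\ at time $\beta_i^+>\gamma_i$. One can repair this on the right via Proposition~\ref{LemmaTanakaStyleForRW} (inside the conditioned chain the hitting time of level $f_i/2$ \emph{is} a stopping time, and the strong Markov property of the Doob $h$-transform applies), but your Cram\'er ratio and the ``free'' factor $\p(\Omega_6^{(i)})\asymp 1/i$ are not the output of that computation as written; one must also split at $\beta_i^+$ since Proposition~\ref{LemmaTanakaStyleForRW} only describes the segment up to $\beta_i^+$. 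On the left you anticipate the difficulty yourself, and indeed Proposition~\ref{Lemma_Egalite_Loi_Gauche_de_m1} does not directly give the law on $]a_i,\alpha_i[$ (which is framed by the levels $f_i+z_i$ and $f_i/2$, not $f_i$). The paper sidesteps all of this: on $(\Omega_4^{(i)})^c\cap\Omega_6^{(i)}$ it exhibits two \emph{disjoint} upcrossings of $V$ (of heights $f_i/4$ and $f_i$ on the left, $f_i/2$ and $3f_i/4$ on the right), intersects with $\Omega_2^{(i)}\cap\Omega_3^{(i)}$ to confine all relevant points to a window of length $O(K_i\cdot f_i)$, \emph{sums over the deterministic positions} of the upcrossing endpoints, and applies the ordinary Markov property at those fixed times together with \eqref{Igelhart0}. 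This yields $O(if_ie^{-\kappa f_i/4})$ directly, with no appeal to Section~\ref{Sect_RW_conditionned}. The ``sum over positions'' device is what buys the paper its simplicity in both $\Omega_3^{(i)}$ and $\Omega_4^{(i)}$.
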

\begin{proof}[Proof of Proposition \ref{BorelCantelli1}]
\begin{itemize}
\item
\underline{Control on $\Omega_1^{(i)}$.}
Observe that for $i\geq 1$,
\begin{eqnarray*}
    \p\left[(\Omega_1^{(i)})^c\right]
&=&
    \p\left[\sup\{k<b_i\, :\, V(k) \ge V(b_i)+f_i+z_i\}<e_{\sigma(i-1)+1}\right]\nonumber\\
&=&
    \p\left[\forall k\in\{e_{\sigma(i-1)+1},...,b_i\},\ V(k) < V(b_i)+f_i+z_i\right]\nonumber\\
&\le&
    \p\left[V(b_{i})> V(e_{\sigma(i-1)+1})-f_i-z_i\right]\nonumber\\
&=&
    \p\left[V(m_1(f_i))> -f_i-z_i\right]
\end{eqnarray*}
by the strong Markov property applied at stopping time $e_{\sigma(i-1)+1}$.
Thus,
\begin{eqnarray}
    \p\left[(\Omega_1^{(i)})^c\right]
&\le&
    \p\left[m_1(f_i)< T_V(]-\infty, -f_i-z_i])\right]
\nonumber\\
&\le&
    \p\left[n_i<T_V(]-\infty,-f_i-z_i])\right]
    +\p\left[m_1(f_i)< T_V(]-\infty,-f_i-z_i])\le n_i\right]
\nonumber\\
&\le&
    \p\left[n_i<T_V(]-\infty,-f_i-z_i])\right]+\p\left[m_1(f_i)< n_i\right],
\label{VD1}
\end{eqnarray}
setting $n_i=\left\lceil 2\kappa_0\frac{f_i+z_i}{|\log v_0|}\right\rceil$.
On the first hand, for every $m>0$ and every positive integer $n$,
we have
$$
    \p\left[T_V(]-\infty,-m])> n\right]
\le
    \p[V(n)\ge -m]
\le
    \E\left[e^{\kappa_0 (V(n)+m)}\right]=v_0^ne^{\kappa_0m},
$$
since $\E\big(e^{\k_0V(n)}\big)=\big[\E\big(e^{\k_0V(1)}\big)\big]^n=v_0^n$ because $(\o_x)_{x\in\Z}$ are i.i.d.
This leads to
\begin{equation}\label{VD2}
    \forall i\geq 1,
\qquad
    \p\left[T_V(]-\infty,-f_i-z_i])>n_i\right]
\le
    v_0^{n_i}e^{\kappa_0(f_i+z_i)}
\le
    e^{-\kappa_0(f_i+z_i)}
\leq
    e^{-\kappa_0 f_i}
\end{equation}
since $z_i\geq 0$.

In order to estimate $\p\left[m_1(f_i)< n_i\right]$, we introduce
the strict descending ladder epochs for $V$ as
\begin{equation}
\label{eq_def_ei_Prime}
    e_0'
:=
    0,
\qquad
    e_{i}'
:=
    \inf\{k>e_{i-1}'\ :\ V(k)< V(e_{i-1}')\},
\qquad
    i\geq 1.
\end{equation}
\begin{equation}
\label{eq_def_Hi_Prime}
    H_i'
:=
    \max_{e_i'\leq k\leq e_{i+1}'}[V(k)-V(e_i')],
\qquad
    i\geq 0.
\end{equation}
In particular for $i\geq 1$, there exists $j\in\N$ such that $m_1(f_i)=e_j'$ and $H_j'\geq f_i$.
Hence, for every $i\geq 1$,
\begin{equation}
    \p\left[m_1(f_i)< n_i\right]
\le
    \p\left[m_1(f_i)< e_{n_i}'\right]
\le
    \p\left[\cup_{j=0}^{n_i-1}\{H_j'\ge f_i\}\right]
\le
    n_i \p[H_1'\geq f_i]
\le
    C_M''n_i e^{-\kappa f_i}
\label{VD3}
\end{equation}
since $\p[H_1'\geq f_i]\leq \p[\sup_{\N}V\geq f_i]\leq C_M'' e^{-\k f_i}$
by \eqref{eq_sup_RW_2}.
Combining \eqref{eq_def_f_i}, \eqref{VD1}, \eqref{VD2}, \eqref{VD3} leads to
$\sum_{i\ge 1}\p\left[(\Omega_1^{(i)})^c\right]<\infty$ since $z_i=(\log i)/\kappa$.
\item \underline{Control on $\Omega_2^{(i)}$.}
Let $i>1$.
Note that
\begin{equation}\label{Omega2c}
    \p\left[(\Omega_2^{(i)})^c\right]
\leq
    \p\left[b_i> T^{\uparrow}(f_i)-1\right]
    +\p\left[T^{\uparrow}(f_i)-1>i\, e^{\kappa f_i}\right]\, .
\end{equation}
Also, by definition of $e_j$, $H_j$, $\sigma(\cdot)$ and $b_i$,
\begin{eqnarray*}
    V^{\uparrow}(b_i)
& = &
    \max(H_j,\  0\leq j\leq \sigma(i)-1)
\\
& = &
    \max[\max(H_{\sigma(j)}, \ 1\leq j\leq i-1) , \max(H_j, \  \sigma(i-1)+1\leq j\leq \sigma(i)-1)],
\end{eqnarray*}
and recall that $\max(H_j, \  \sigma(i-1)+1\leq j\leq \sigma(i)-1)< f_i$ by definition of $\sigma(i)$.
Thus, by definition of $\sigma(\cdot)$, $f_i$ and $N_i$, and by \eqref{Igelhart0}, for some constant $C_5>0$,
\begin{eqnarray}
    \p\left[b_i\ge T^{\uparrow}(f_i)\right]
& = &
    \p\left[V^{\uparrow}(b_i)\geq f_i\right]
\le
    \sum_{j=1}^{i-1}
    \p\left[H_{\sigma(j)}\geq f_{i}\right]
=
    \sum_{j=1}^{i-1}
    \p\left[H_1\geq f_{i}|H_1\geq f_{j} \right]
\nonumber\\
&\le&
    C_5 \sum_{j=1}^{i-1}
    \exp\left(-\kappa[f_{i}-f_{j}]\right)
=
    C_5 \sum_{j=1}^{i-1} \left(\frac{N_j}{N_i}\right)^{\kappa}\left(\frac ij\right)^{\kappa(1+\varepsilon)}
\nonumber\\
&\le&
    C_5\left(\frac{C_0}{C_0-1}\right)^{\kappa} \sum_{j=1}^{i-1}\left(\frac {j!}{i!}\right)^{1+\varepsilon}
\nonumber\\
&\le&
    C_5\left(\frac{C_0}{C_0-1}\right)^{\kappa}
    \left(
    \frac{1}{i^{1+\e}}+
    \sum_{j=1}^{i-2}\frac {1}{[(i-1)i]^{1+\varepsilon}}
    \right)
\nonumber\\
&\le&
    O\left(i^{-1-\varepsilon}\right)\, .
\label{controleb1}
\end{eqnarray}
Moreover, setting $K_i:=i e^{\kappa f_i }+ 1$, we have for $i$ large enough,
\begin{eqnarray}
    \p\left[T^{\uparrow}(f_i)>K_i\right]
&\le&
    \p\left[e_{\lfloor \frac{K_i}{2\E[e_1]}\rfloor}>K_i\right]
    +
    \p\left[T^{\uparrow}(f_i)>e_{\lfloor \frac{K_i}{2\E[e_1]}\rfloor}\right]
\nonumber\\
&\le&
    \p\left[e_{\lfloor \frac{K_i}{2\E[e_1]}\rfloor}-\E[e_{\lfloor \frac{K_i}{2\E[e_1]}\rfloor}]>  \frac{K_i}2\right]
    +
    \big(1-\p\left[H_1\geq f_i\right]\big)^{\lfloor\frac {K_i}{2\E[e_1]} \rfloor}\nonumber\\
&\le&
    \frac{  2\mbox{var}(e_1)}{K_i\E[e_1]}
    +
    \Big(1-C^{(0)}_Ie^{-\kappa f_i}\Big)^{\frac {K_i}{2\E[e_1]}-1}
\nonumber\\
& \le &
    O(K_i ^{-1})+\exp\bigg(-\frac{C^{(0)}_Ie^{-\kappa f_i}(K_i-2\E[e_1])}{2\E[e_1]}\bigg)
\nonumber\\
&=&
    O\bigg(K_i^{-1}+\exp\bigg(-\frac{C^{(0)}_I i}{4\E[e_1]}\bigg)\bigg),
\label{controleb2}
\end{eqnarray}
where we used the fact that $e_{k+1}-e_k$, $k\geq 0$ are i.i.d.
so that
$\E(e_\ell)=\ell \E(e_1)$
and
$\mbox{var}(e_\ell)=\ell\, \mbox{var}(e_1)$ for $\ell\in\N$, and
$\{T^{\uparrow}(f_i)>e_\ell\}=\cap_{k=0}^{\ell-1}\{H_k<f_i\}$
where $H_k$, $k\in\N$ are also i.i.d.,
the Bienaym\'e-Chebychev inequality since $E(e_1^2)<\infty$ (as explained after \eqref{eq_Large_Dev_RW}),
$1-x\leq e^{-x}$ for $x\in\R$, and \eqref{Igelhart0}.
The quantities appearing in \eqref{controleb2} and
in \eqref{controleb1} are summable (since $\varepsilon>0$)
and so, due to \eqref{Omega2c}, $\sum_{i\ge 1}\p\left[(\Omega_2^{(i)})^c\right]<\infty$.

\item \underline{Control on $\Omega_3^{(i)}$.}
Let $i\geq 1$.
Observe that in every case, $a_i\geq \sup\{k< b_i\, :\, V(k)\geq V(b_i)+f_i+z_i\}$,
and so in particular, $V(a_i+1)<V(b_i)+f_i+z_i$. Since $V(b_i)=V(e_{\sigma(i)})=\min_{[0, e_{\sigma(i)+1}-1]}V$, this leads to
$$
    \forall 0\leq k\leq e_{\sigma(i)+1}-1,
\qquad
    V(k)
\geq
    V(b_i)
>
    V(a_i+1)-f_i-z_i.
$$
Notice that on $\big(\Omega_3^{(i)}\big)^c$, we have $c_i-a_i> C_4(f_i+z_i)$, and so
$$
    a_i+\lfloor C_4(f_i+z_i)\rfloor
\leq
    c_i
\leq
    e_{\sigma(i)+1}-1.
$$
Hence,
$
    \big(\Omega_3^{(i)}\big)^c
\subset
    \{V[a_i+\lfloor C_4(f_i+z_i)\rfloor]\geq V(a_i+1)-f_i-z_i\}
$.
Consequently, summing over all the possible values of $a_i+1$ when it is less than or equal to $K_i+1=i e^{\k f_i}+1$,
\begin{eqnarray}
    \p\Big[\big(\Omega_3^{(i)}\big)^c\cap \Omega_2^{(i)}\Big]
&\le&
    \p\big( V[a_i+\lfloor C_4(f_i+z_i)\rfloor ]\geq V(a_i+1)-f_i-z_i,\ a_{i}\leq b_i\leq K_i\big)
\nonumber\\
&\le&
    \sum_{x=1}^{K_i+1}\p\Big[V\Big(x+\lfloor C_4(f_i+z_i)\rfloor-1\Big)-V(x)\ge -f_i-z_i\Big]
\nonumber\\
&\le&
    (K_i+1)\p\big[V\big(\lfloor C_4(f_i+z_i)\rfloor-1\big)+f_i+z_i \geq 0\big]
\nonumber
\end{eqnarray}
by the Markov property. Hence, using $\p(X\geq 0)=\p[e^{\k_0 X}\geq 1]\le \E[e^{\k_0 X}]$
(due to the Markov inequality since $\k_0>0$), Remark \ref{expo<1}
and $\E\big(e^{\k_0V(n)}\big)=\big[\E\big(e^{\k_0V(1)}\big)\big]^n=v_0^n$ for $n\geq 1$, we get
\begin{eqnarray}
    \p\Big[\big(\Omega_3^{(i)}\big)^c\cap \Omega_2^{(i)}\Big]
&\le&
    2K_i\E\Big[\exp\Big(\kappa_0 \Big(V\big(\lfloor C_4(f_i+z_i)\rfloor-1\big)+f_i+z_i\Big)\Big)\Big]
\nonumber\\
&\le& \
    2 i e^{\k f_i}
    e^{\k_0(f_i+z_i)}
    v_0^{\lfloor C_4(f_i+z_i)\rfloor-1}
\leq
    2 i e^{(\k+\k_0) (f_i+z_i)}
    v_0^{C_4(f_i+z_i)-2}
\nonumber\\
&\le& \
    2v_0^{-2} i e^{(\k+\k_0+C_4(\log v_0)) (f_i+z_i)}\, .
\nonumber
\end{eqnarray}
Since $f_i+z_i\geq i$ for large $i$, $v_0<1$  and  $C_4>\frac{2(\kappa+\k_0)}{|\log v_0|}$, this gives
\begin{equation}
    \p\Big[\big(\Omega_3^{(i)}\big)^c\Big]
\le
    \p\Big[\big(\Omega_2^{(i)}\big)^c\Big]
    +
    \p\Big[\big(\Omega_3^{(i)}\big)^c\cap \Omega_2^{(i)}\Big]
\leq
    \p\Big[\big(\Omega_2^{(i)}\big)^c\Big]
    +
    O\big( i e^{-(\k+\k_0)i}\big)\, .
\label{c-b}
\end{equation}
Thanks to the control on $\Omega_2^{(i)}$ and since $\k+\k_0>0$,
this gives $\sum_{i\ge 1}\p\left[(\Omega_3^{(i)})^c\right]<\infty$.

\item
\underline{Control on $\Omega_4^{(i)}$.}
Now let us prove that $\sum_{i\ge 1}\p\left((\Omega_4^{(i)})^c\cap \Omega_6^{(i)}\right)<\infty$.
We set for $x\leq y$ and $h>0$,
\begin{eqnarray*}
&&
    E^{\uparrow}_{x,y}(h):=
\\
&  &
    \left\{\inf\{k>0 : V(x+k)\geq V(x)+h\}\leq \min(y-x,\inf\{k>0 : V(x+k)\le V(x)\})\right\} .
\end{eqnarray*}
This event corresponds to the case where, after location $x$, the potential increases at least $h$ before $y$ and before becoming again smaller than
or equal to its value at $x$.
Let $i\geq 1$.
Note that
$$
    \big(\Omega_4^{(i)}\big)^c\subset \mathcal A_i\cup\mathcal B_i \, ,
$$
with $\mathcal A_i:=\cup_{a_i<u<\alpha_i}\{V(u)\leq V(b_i)+f_i/4\}$
and $\mathcal B_i:=\cup_{\gamma_i<u<c_i}\{V(u)\leq V(b_i)+f_i/4\}$.
Notice that if $\mathcal A_i\neq\emptyset$, then $\alpha_i>a_i\geq  e_{\sigma(i-1)+1}$,
so $V(\alpha_i)\geq V(b_i)+f_i/2$.
On $\mathcal A_i$, denote by $x$ the largest $u$ satisfying the conditions defining $\mathcal A_i$.
Starting from $x$, the potential hits $V(\alpha_i)\in[V(b_i)+f_i/2,+\infty[\subset[V(x)+f_i/4, +\infty[$
before going back to $]-\infty, V(x)]$
and before $b_i$ since $\alpha_i<b_i$.
Then from $b_i=e_{\sigma(i)}$, the potential hits $[V(b_i)+f_i,+\infty[$ before going back to $]-\infty, V(b_i)]$,
because $H_{\sigma(i)}\geq f_i$, and before $c_i$.

Similarly on $\mathcal B_i\cap \Omega_6^{(i)}$, denote by $y$ the largest $u$ satisfying the conditions defining $\mathcal B_i$.
Notice that $V(c_i)\geq V(b_i)+f_i +z_i\geq V(y)+3f_i/4$ since $\omega\in \Omega_6^{(i)}$.
Starting from $b_i$, the potential hits $V(\gamma_i)\in[V(b_i)+f_i/2,+\infty[$ at location $\gamma_i$ (and so before $y$),
and before going back to $]-\infty, V(b_i)]$. Also, starting from $y>\gamma_i$, it hits
$[V(b_i)+f_i+z_i,+\infty[\subset [V(y)+3f_i/4,+\infty[$
before going back to $]-\infty, V(y)]$ and before $c_i$.
Hence,
$$
    \mathcal A_i
\subset
    \bigcup_{x\in]a_i,\alpha_i[} E^{\uparrow}_{x,b_i}\left(\frac {f_i}4\right)\cap
   E^{\uparrow}_{b_i,c_i}(f_i)
\ \mbox{ and }\
    \mathcal B_i\cap \Omega_6^{(i)}
\subset
    \bigcup_{y\in]\gamma_i,c_i[} E^{\uparrow}_{b_i,y} \left(\frac {f_i}2\right)\cap E^{\uparrow}_{y,c_i}\left(\frac {3f_i}4\right).
$$
Since $a_i$, $\alpha_i$, $b_i$ and $\gamma_i$ are not stopping times, we sum over their possible values,
using $b_i\leq i e^{\k f_i}=K_i-1$ and $c_i-a_i\leq C_4(f_i+z_i)$ on $\Omega_2^{(i)}\cap  \Omega_3^{(i)}$
and $0\leq a_i\leq \alpha_i\leq b_i\leq \gamma_i\leq c_i$ as follows.
For large $i$, by the Markov property at times $b$ and $y$, followed by \eqref{Igelhart0},
\begin{eqnarray*}
&\ &
    \p\left[\big(\Omega_4^{(i)}\big)^c\cap \Omega_6^{(i)}\cap \Omega_2^{(i)}\cap  \Omega_3^{(i)}\right]
\\
&\le&
    \sum_{0\leq x\le K_i}\, \sum_{b\in]x,x+C_4(f_i+z_i)]}
        \p\left[  E^{\uparrow}_{x,b}\left(\frac {f_i}4\right)\cap E^{\uparrow}_{b,K_i+C_4(f_i+z_i)}(f_i)\right]
\\
&&
\quad
    +
    \sum_{0\leq b\le K_i}\, \sum_{y\in]b,b+C_4(f_i+z_i)]}
        \p\left[ E^{\uparrow}_{b,y} \left(\frac {f_i}2\right)\cap E^{\uparrow}_{y,K_i+C_4(f_i+z_i)}\left(\frac {3f_i}4\right)\right]
\\
& \le &
    2K_i\, C_4(f_i+z_i)
    \left(\p\left[H_1\geq \frac {f_i}4\right]\p\left[H_1\geq {f_i}\right]
    +\p\left[H_1\geq \frac {f_i}2\right] \p\left[H_1\geq \frac {3f_i}4\right]\right)
\\
& \le &
    O\left(K_i f_i e^{-\frac{5\kappa f_i}4}\right)
    =O\left(i f_i e^{-\frac{\kappa f_i}4}\right)\, .
\end{eqnarray*}
From this, we conclude that $\sum_{i\ge 1}\p\left((\Omega_4^{(i)})^c\cap \Omega_6^{(i)}\right)<\infty$,
thanks to \eqref{eq_def_f_i} and to the controls on
$\Omega_2^{(i)}$ and $\Omega_3^{(i)}$.
\end{itemize}
\end{proof}
We now turn to $\Omega_5^{(i)}$ and $\Omega_6^{(i)}$.
\begin{prop}\label{BorelCantelli2}
We have
$$\sum_{i\ge 1}\p\left[\Omega_5^{(i)}\cap\Omega_6^{(i)}\right]=\infty\, . $$
\end{prop}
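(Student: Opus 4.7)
The strategy is a Markov inequality argument, lower-bounding $\p[\Omega_5^{(i)}\cap\Omega_6^{(i)}]$ by a quantity of order $1/i$, so that the series diverges. Writing
\[
    \p[\Omega_5^{(i)}\cap\Omega_6^{(i)}] \ge \p[\Omega_6^{(i)}] - \p\big[(\Omega_5^{(i)})^c\cap\Omega_6^{(i)}\big],
\]
the first step is to obtain $\p[\Omega_6^{(i)}]\gtrsim 1/i$. Since $\sigma(i)$ is the first index after $\sigma(i-1)$ at which the i.i.d.\ sequence $(H_k)_k$ exceeds $f_i$, the strong Markov property gives that $H_{\sigma(i)}$ has the law of $H_1$ conditioned on $\{H_1\ge f_i\}$; using the two-sided bound \eqref{Igelhart0} yields
\[
    \p[\Omega_6^{(i)}] \;=\; \frac{\p[H_1\ge f_i+z_i]}{\p[H_1\ge f_i]} \;\ge\; \frac{C_I^{(0)}}{C_I^{(1)}}\,e^{-\kappa z_i} \;=\; \frac{C_I^{(0)}}{C_I^{(1)}}\cdot\frac{1}{i},
\]
since $z_i=(\log i)/\kappa$.

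The second step is to control $\E\big[\un_{\Omega_6^{(i)}}\Sigma_i\big]$ where $\Sigma_i:=\sum_{k=\alpha_i}^{\gamma_i}e^{-(V(k)-V(b_i))}$, via Markov's inequality
$\p[(\Omega_5^{(i)})^c\cap\Omega_6^{(i)}]\le (7C_2)^{-1}\E[\un_{\Omega_6^{(i)}}\Sigma_i]$. I would split $\Sigma_i$ at $b_i$ into a ``left'' sum $X_L$ over $[\alpha_i,b_i-1]$ and a ``right'' sum $X_R$ over $[b_i,\gamma_i]$. By the structure of $Y_i$ from Remark \ref{indep}, the portion of $V$ to the left of $b_i$ (a chain of rejected excursions each of height $<f_i$) and the portion to its right (the accepted excursion of height $\ge f_i$) are independent; moreover $\Omega_6^{(i)}$ depends only on the right portion. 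Hence
\[
    \E\big[\un_{\Omega_6^{(i)}}\Sigma_i\big] \;=\; \p[\Omega_6^{(i)}]\,\E[X_L] \;+\; \E\big[\un_{\Omega_6^{(i)}}X_R\big].
\]

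For $X_R$: conditionally on $\Omega_6^{(i)}$, the accepted excursion from $b_i$ is distributed as a fresh walk conditioned to reach $[f_i+z_i,\infty)$ before returning to its starting level; by the identity in law of Proposition \ref{LemmaTanakaStyleForRW} this is exactly the law of the walk up to $T^{\uparrow}(f_i+z_i)$ starting from $m_1(f_i+z_i)=0$. Since $\gamma_i$ is the first hitting time of level $f_i/2$, which precedes $T^{\uparrow}(f_i+z_i)$, Lemma \ref{LemmaConstantInvariantMeasure} with $h=f_i+z_i$ gives $\E[X_R\mid\Omega_6^{(i)}]\le C_2$. For $X_L$: using Proposition \ref{Lemma_Egalite_Loi_Gauche_de_m1} to reverse time at $b_i$, the reversed walk has the law of $V(-\cdot)$ conditioned to stay strictly positive on $\N^*$. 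On the typical event $\{\alpha_i>e_{\sigma(i-1)+1}\}$, Lemma \ref{LemmaConstantInvariantMeasure} (second inequality, with $h=f_i/2$) bounds $\E[X_L]$ by $C_2$; on the complementary ``truncated'' event, a competition-of-excursions argument shows the probability is of order $e^{-\kappa f_i/2}$, whereas the left part has length at most $b_i-e_{\sigma(i-1)+1}$ with expectation of order $e^{\kappa f_i/2}$ (by Wald's identity applied to a geometric number of rejected excursions with $H<f_i/2$), producing an $O(1)$ contribution. In total, $\E[X_L]\le C_2'$ for some constant independent of $i$.

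Combining, $\E[\un_{\Omega_6^{(i)}}\Sigma_i]\le (C_2+C_2')\p[\Omega_6^{(i)}]$, and by choosing the constant $7C_2$ in the definition of $\Omega_5^{(i)}$ large enough relative to $C_2+C_2'$ (which it is, after possibly enlarging $C_2$), we conclude
\[
    \p\big[(\Omega_5^{(i)})^c\cap\Omega_6^{(i)}\big] \;\le\; \tfrac{1}{2}\,\p[\Omega_6^{(i)}],
\qquad
    \p[\Omega_5^{(i)}\cap\Omega_6^{(i)}] \;\ge\; \tfrac{1}{2}\,\p[\Omega_6^{(i)}] \;\ge\; \frac{c}{i}
\]
for some $c>0$ and all $i$ large enough, so the series diverges. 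The main obstacle is the careful bookkeeping in the left-sum bound: identifying the reversed-walk law via Proposition \ref{Lemma_Egalite_Loi_Gauche_de_m1} and handling the truncation case $\alpha_i=e_{\sigma(i-1)+1}$ where Lemma \ref{LemmaConstantInvariantMeasure} does not directly apply and one must instead use the moderate-probability/moderate-length trade-off described above.
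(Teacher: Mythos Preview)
Your high-level strategy is the same as the paper's: obtain $\p[\Omega_5^{(i)}\cap\Omega_6^{(i)}]\ge c/i$ via a Markov-inequality bound on the invariant-measure sum, then invoke Lemma~\ref{LemmaConstantInvariantMeasure}. The execution, however, differs from the paper's and contains a genuine gap.

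The paper does \emph{not} condition on $\Omega_6^{(i)}$ and try to control $\E[\Sigma_i\mid\Omega_6^{(i)}]$. Instead it first enlarges the summation range to $[\beta_i^-,\beta_i^+]$, then applies the strong Markov property at the stopping time $\beta_i^+$ to \emph{factor out} the event $\Omega_6^{(i)}$ as an independent factor $\ge\p[H_1>z_i]\ge C_I^{(0)}e^{-\kappa z_i}$. After a further Markov step at $e_{\sigma(i-1)+1}$, the remaining event is exactly $\{\sum_{{}^{\uparrow}T_{f_i}(f_i)}^{T^{\uparrow}(f_i)}e^{-(V-V(m_1(f_i)))}<7C_2,\ {}^{\uparrow}T_{f_i}(f_i)\ge 0\}$, so Lemma~\ref{LemmaConstantInvariantMeasure} applies directly with $h=f_i$ (both inequalities, giving total $\le 2C_2$). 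The probability of the conditioning event $\{{}^{\uparrow}T_{f_i}(f_i)\ge 0\}$ tends to $1$ by the $\Omega_1^{(i)}$ estimate in Proposition~\ref{BorelCantelli1}, so no ``truncation'' analysis is needed at all.

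In your route the main problem is the truncation case $\alpha_i=e_{\sigma(i-1)+1}$. Your Wald argument does not give what you claim: the unconditional number of rejected excursions before the first one of height $\ge f_i$ is geometric with parameter $\asymp e^{-\kappa f_i}$, so $\E[b_i-e_{\sigma(i-1)+1}]\asymp e^{\kappa f_i}$, not $e^{\kappa f_i/2}$; and on the truncation event the excursions are \emph{not} simply i.i.d.\ with $H<f_i/2$ (the event also constrains the total drop to be $<f_i/2$), so Wald's identity does not apply as stated. Consequently the claimed $O(1)$ contribution is not established, and it is not clear from your argument that $\E[X_L]$ is uniformly bounded in $i$. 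There are also two smaller slips: for $X_R$, the law of the excursion from $b_i=e_{\sigma(i)}$ given $\Omega_6^{(i)}$ is the \emph{weak}-excursion law conditioned on $\{T_V([h,\infty))<T_V(]-\infty,0])\}$, whereas Proposition~\ref{LemmaTanakaStyleForRW} conditions on $\{T_V([h,\infty))<T_V(]-\infty,0[)\}$ (strict); and for $X_L$, Lemma~\ref{LemmaConstantInvariantMeasure}(2) is stated for the pair $({}^{\uparrow}T_h(h),m_1(h))$ with the \emph{same} $h$, not for $({}^{\uparrow}T_{f_i}(f_i/2),m_1(f_i))$. Both are fixable, but the simplest fix for everything is to follow the paper's decoupling at $\beta_i^+$ and restriction to $\{\beta_i^-\ge e_{\sigma(i-1)+1}\}$, which makes the truncation analysis unnecessary.
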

\begin{proof}
We have for $i\geq 1$, due to the strong Markov property at stopping time $\beta_i^+$,
\begin{eqnarray*}
    \p\left[\Omega_5^{(i)}\cap\Omega_6^{(i)}\right]
&\ge &
    \p\left[\Omega_5^{(i)}\cap\Omega_6^{(i)}\cap \{\beta_i^-\ge e_{\sigma(i-1)+1}\}\right]
\\
&\ge&
    \p\left[\sum_{k=\beta_i^-}^{\beta_i^+}e^{-(V(k)-V(b_i))}<7C_{2},\ \beta_i^- \ge e_{\sigma(i-1)+1}\right]
    \p\left[H_1>z_i\right]
\\
&\ge&\
    \p\left[\sum_{k={}^{\uparrow}T_{f_i}(f_i)}^{T^{\uparrow}(f_i)}e^{-[V(k)-V(m_1(f_i))]}<7C_{2}
            ,\  {}^{\uparrow}T_{f_i}(f_i)\ge 0\right]
    C_I^{(0)}e^{-\kappa z_i}\, ,
\end{eqnarray*}
where we used the Markov property at stopping time $e_{\sigma(i-1)+1}$ and \eqref{Igelhart0} in the last line.  Finally, due to the Markov inequality and to Lemma \ref{LemmaConstantInvariantMeasure}, we obtain
\begin{eqnarray*}
&\ &
    \p\left[\Omega_5^{(i)}\cap\Omega_6^{(i)}\right]
\\
&\ge&
    \p\left[\sum_{k={}^{\uparrow}T_{f_i}(f_i)}^{T^{\uparrow}(f_i)}e^{-[V(k)-V(m_1(f_i))]}<7C_{2}
        \left|  {}^{\uparrow}T_{f_i}(f_i)\ge 0\right.\right]
        \p\left[ {}^{\uparrow}T_{f_i}(f_i)\ge 0\right]C_I^{(0)}e^{-\kappa z_i}
\\
& = &
    \left(1-\p\left[\sum_{k={}^{\uparrow}T_{f_i}(f_i)}^{T^{\uparrow}(f_i)}e^{-[V(k)-V(m_1(f_i))]}\geq 7C_{2}\left|  {}^{\uparrow}T_{f_i}(f_i)\ge 0\right.\right]\right)
    \p\left[ {}^{\uparrow}T_{f_i}(f_i)\ge 0\right]C_I^{(0)}e^{-\kappa z_i}
\\
&\ge&
    \left(1- \frac{\E\left[\sum_{k={}^{\uparrow}T_{f_i}(f_i)}^{T^{\uparrow}(f_i)}e^{-[V(k)-V(m_1(f_i))]}
    \left|  {}^{\uparrow}T_{f_i}(f_i)\ge 0\right.\right]}{7C_{2}}\right)
    \p\left[ {}^{\uparrow}T_{f_i}(f_i)\ge 0\right]
    C_I^{(0)}e^{-\kappa z_i}
\\
&\ge&
    \left(1- 2/7\right)\p\left[
    {}^{\uparrow}T_{f_i}(f_i)\ge 0\right]
    C_I^{(0)} e^{-\kappa z_i} \, ,
\end{eqnarray*}
for all $i$ large enough,
where we used the fact that $(V(x+m_1(f_i))-V(m_1(f_i)),\ x\geq 0)$ is,
by Proposition \ref{LemmaTanakaStyleForRW} and the strong Markov property at stopping time $T^{\uparrow}(f_i)$,
independent of
$(V(x+m_1(f_i))-V(m_1(f_i)),\ x\leq 0)$ in the last inequality while applying Lemma \ref{LemmaConstantInvariantMeasure}.
Moreover, due to Proposition \ref{BorelCantelli1} and once more the strong Markov property,
$$
    \p\left[ {}^{\uparrow}T_{f_i}(f_i)\ge 0\right]
\geq
    \P\big(\Omega_1^{(i)}\big)
\to_{i\rightarrow +\infty}
    1.
$$
The result follows then from the fact that
$e^{-\kappa z_i}=1/i$.
\end{proof}
\begin{prop}\label{Prop_i_n}
The set $\tilde\Omega$ of environments such that
there exists a strictly increasing sequence $(i(n))_{n\in\N}$ of integers satisfying the following properties for every $n\in\N$ has
probability
$\p\big(\tilde\Omega\big)=1$:
\begin{equation}\label{lowerboundi(n)}
i(n)\ge\max\left(n,n^{\frac{1+\varepsilon}{1/\kappa-1-3\varepsilon/2}}\right)\, ,
\end{equation}
\begin{equation}\label{monteeGauche}
a_{i(n)}=\sup\{k< b_{i(n)}\, :\, V(k) \geq V(b_{i(n)})+f_{i(n)} +z_{i(n)}\}\, ,
\end{equation}
\begin{equation}\label{inegalphabeta}
    c_{i(n)}-a_{i(n)}
\le
    C_4(f_{i(n)}+z_{i(n)})\, ,
\end{equation}
\begin{equation}\label{Premieremontee}
b_{i(n)}\le T^{\uparrow}(f_{i(n)})-1\, ,
\end{equation}
\begin{equation}\label{Vminore}
\inf_{]a_{i(n)},c_{i(n)}[\setminus]\alpha_{i(n)},\gamma_{i(n)}[}V> V(b_{i(n)})+\frac{f_{i(n)}}4\, ,
\end{equation}
\begin{equation}\label{sumvalleybound}
\sum_{k=a_{i(n)}}^{c_{i(n)}}e^{-(V(k)-V(b_{i(n)}))}<8C_{2}\, ,
\end{equation}
\begin{equation}\label{eqDefHeightin}
H_{\sigma(i(n))}\ge f_{i(n)}+z_{i(n)}.
\end{equation}
\end{prop}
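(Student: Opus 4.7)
My plan is to combine a convergent Borel--Cantelli argument from Proposition \ref{BorelCantelli1} with an independent Borel--Cantelli argument from Proposition \ref{BorelCantelli2}, use Remark \ref{indep} to secure independence, and then extract a sufficiently sparse subsequence.

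The convergent Borel--Cantelli lemma applied to Proposition \ref{BorelCantelli1} produces a full-measure event on which, for every sufficiently large $i$, the three events $\Omega_1^{(i)},\Omega_2^{(i)},\Omega_3^{(i)}$ all hold and the occurrence of $\Omega_6^{(i)}$ forces the occurrence of $\Omega_4^{(i)}$. This already delivers \eqref{monteeGauche}, \eqref{inegalphabeta} and \eqref{Premieremontee} for all large $i$, and it will automatically yield \eqref{Vminore} once I also secure $\Omega_6^{(i)}$ infinitely often.

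The second ingredient is the divergent Borel--Cantelli lemma applied to $(\Omega_5^{(i)}\cap\Omega_6^{(i)})_{i\ge 1}$. The key observation is that each such event is measurable with respect to the single block $Y_i$ of Remark \ref{indep}: the points $\alpha_i,\beta_i^\pm,\gamma_i$ all lie in $[e_{\sigma(i-1)+1},e_{\sigma(i)+1}]$ and $H_{\sigma(i)}=\max_{b_i\le k\le e_{\sigma(i)+1}}(V(k)-V(b_i))$, both of which depend only on the shifted excursion encoded in $Y_i$. Since the $Y_i$ are mutually independent, so are the events $\Omega_5^{(i)}\cap\Omega_6^{(i)}$, and Proposition \ref{BorelCantelli2} then guarantees almost surely that infinitely many of them occur.

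Intersecting the two full-measure events yields an infinite random set $\mathcal I$ of indices for which all six events $\Omega_1^{(i)},\ldots,\Omega_6^{(i)}$ hold. For such $i$, \eqref{eqDefHeightin} is exactly $\Omega_6^{(i)}$. The only slightly non-routine task is \eqref{sumvalleybound}: inside $[\alpha_i,\gamma_i]$ the bound $7C_2$ comes from $\Omega_5^{(i)}$, while on the complementary pieces $[a_i,\alpha_i)\cup(\gamma_i,c_i]$ the event $\Omega_4^{(i)}$ (together with \eqref{monteeGauche} and \eqref{eqDefHeightin} at the two endpoints) forces $V(k)-V(b_i)>f_i/4$, and $\Omega_3^{(i)}$ bounds the length by $C_4(f_i+z_i)$, so the outer contribution is $O((f_i+z_i)e^{-f_i/4})$, which is below $C_2$ for all large $i$. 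Finally, since $\mathcal I$ is infinite, I define $i(n)$ recursively as the smallest element of $\mathcal I$ that is both larger than $i(n-1)$ and at least $g(n):=\max\bigl(n,n^{(1+\varepsilon)/(1/\kappa-1-3\varepsilon/2)}\bigr)$, which yields \eqref{lowerboundi(n)}. The only delicate step I anticipate is the independence verification underlying the second Borel--Cantelli argument, i.e.\ carefully checking that $\Omega_5^{(i)}\cap\Omega_6^{(i)}$ is genuinely $Y_i$-measurable; the rest is an assembly of estimates already at hand.
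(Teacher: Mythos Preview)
Your proposal is correct and follows essentially the same approach as the paper's proof: Borel--Cantelli from Proposition~\ref{BorelCantelli1} to get $\cap_{j=1}^4\Omega_j^{(i)}$ (or $(\Omega_6^{(i)})^c$) for all large $i$, independence of the $\Omega_5^{(i)}\cap\Omega_6^{(i)}$ via Remark~\ref{indep} together with Proposition~\ref{BorelCantelli2} for the divergent Borel--Cantelli, the same splitting of the sum in \eqref{sumvalleybound} into the $[\alpha_i,\gamma_i]$ part (bounded by $7C_2$) and the outer part (bounded by $C_4(f_i+z_i)e^{-f_i/4}\le C_2$ for large $i$), and the same recursive definition of $i(n)$. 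The paper does not elaborate on the $Y_i$-measurability of $\Omega_5^{(i)}\cap\Omega_6^{(i)}$ beyond invoking Remark~\ref{indep}, so your anticipated ``delicate step'' is treated just as lightly there.
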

Note that $\frac{1}{\kappa} - 1 -\frac{3\varepsilon}{2} > 0$ since $\varepsilon\in]0,\frac{1-\kappa}{2\kappa}[$
and $0<\k<1$, see the beginning of Section \ref{Sect_Valleys}.
The valley number $i(n)$, that is, $[a_{i(n)}, c_{i(n)}]$, $n\in\N$, is called the $n$-th{ \it very deep valley}.

\begin{proof}
Due to Proposition \ref{BorelCantelli1} and to the Borel-Cantelli lemma, $\p$-a.s., there exists
$i_0\ge 1$ (we denote by $i_0$ the smallest one)
such that for every $i\ge i_0$, 
$\omega\in \cap_{j=1}^4 \Omega_j^{(i)}$ or $\omega\in \big(\Omega_6^{(i)}\big)^c$.
That is, for every $i\geq i_0$, $\omega\in \big(\Omega_6^{(i)}\big)^c$ or the following holds true
$$
    a_i=\sup\{k< b_i\, :\, V(k) \geq V(b_i)+f_i +z_i\},\qquad
    c_i-a_i\le C_4(f_i+z_i)\, ,$$
$$
    b_{i}\le T^{\uparrow}(f_{i})-1\le ie^{\kappa f_i}\, ,
\qquad
    \inf_{]a_i,c_i[\setminus]\alpha_i,\gamma_i[}V > V(b_i)+\frac {f_i}4\, ,
$$
so that $\omega\in \big(\Omega_6^{(i)}\big)^c$ or
$$
    \sum_{k\in]a_i,c_i[\setminus]\alpha_i,\gamma_i[} e^{-(V(k)-V(b_i))}
\le
    C_4(f_i+z_i) e^{-f_i/4}
\leq
    C_{2} \, ,
$$
for every $i$ large enough, and so for all $i\geq i_0$ (up to a change of the value of $i_0$).
Moreover, due to Remark \ref{indep}, the events $\Omega_5^{(i)}\cap\Omega_6^{(i)}$, $i\geq 1$ are independent.
So, Proposition \ref{BorelCantelli2} and the Borel-Cantelli lemma
ensure that, $\p$-almost surely, the set $\mathcal I $ (depending on the environment) of positive integers $i\ge i_0$ such that:
$$
    \sum_{k= \alpha_i}^{\gamma_i}e^{-(V(k)-V(b_i))}<7C_{2}\quad \mbox{and}\quad H_{\sigma(i)}\ge f_i+z_i
$$
is infinite.
We construct now $(i(n))_{n\in\N}$ by induction as follows:
$i(0):=\inf \mathcal I$ and for $n\geq 1$,
\begin{equation}\label{eq_def_i_n}
    i(n)
:=
    \inf\{j\ge \max(i(n-1)+1,n,n^{\frac{1+\varepsilon}{1/\kappa-1-3\varepsilon/2}})\ :\ j\in\mathcal I\}\, .
\end{equation}
By construction $(i(n))_{n\in\N}$ is strictly increasing, satisfies \eqref{lowerboundi(n)}-\eqref{eqDefHeightin} for every $n\in\N$,
and $i(n)\in\N$ for every $n\in\N$ on a event $\tilde \Omega$ having probability $1$.
\end{proof}

In the rest of the paper, for every $n\in\N$, $i(n)$ denotes the random variable (uniquely) defined by \eqref{eq_def_i_n},
and depending only on the environment.

\section{Quenched estimates for the RWRE}\label{Sect_Quenched}
This section is devoted to the proof of the main result, Theorem \ref{Theorem1Meeting}. We first need to prove some
preparatory lemmas, for which we use quenched techniques.
See Figure \ref{figure2_Potential_Valley_Transient} for a schema of the potential for a very deep valley.

\subsection{Hitting times and exit times of very deep valleys}
For any $y\in\mathbb Z$, recall that we write $\tau(y)$ for the time of the first visit
of $S$ to $y$:
$$
    \tau(y):=\inf\{k\ge 0\ :\ S_k=y\}.
$$
Also for $x\in\Z$, let $\P^x(\cdot):=\int P_\omega^x(\cdot)\p(\textnormal{d}\omega)$
be the annealed law of a RWRE in the environment $\omega$, starting from $x$.
The first lemma of this subsection says that for large $n\in\N$, a RWRE in the environment $\omega$
hits relatively quickly the bottom $b_{i(n)}$ of the $n$-th very deep valley.
\begin{lem} \label{LemmaHittingTimeVeryDeepValleys}
Let $x\in\Z$ and recall \eqref{eq_def_N_i}. Then,
$\PP^x$-almost surely,
\begin{equation}\label{eq:HittingTimeVeryDeepValleys}
\exists n_0\geq 1,\quad \forall n\ge n_0,\quad
    \tau(b_{i(n)})\leq N_{i(n)}/10\, .
\end{equation}
\end{lem}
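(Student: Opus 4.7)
\emph{Plan.} The plan combines Fact~\ref{FactTimeSpnetBeforeFirstValley} with Markov's inequality and the Borel--Cantelli lemma. On the full-probability event $\tilde\Omega$ of Proposition~\ref{Prop_i_n}, \eqref{Premieremontee} gives $b_{i(n)}\le T^{\uparrow}(f_{i(n)})-1$; since $b_{i(n)}\to+\infty$ almost surely, for $n$ large enough $x\le b_{i(n)}$, and a walker on $\Z$ must visit $b_{i(n)}$ en route from $x$ to $T^{\uparrow}(f_{i(n)})-1$. Thus $\tau(b_{i(n)})\le \tau(T^{\uparrow}(f_{i(n)})-1)$, and it suffices to control this latter quantity.

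For the \emph{reflected} walker, Fact~\ref{FactTimeSpnetBeforeFirstValley} gives $\EE_{|0}[\tau(T^{\uparrow}(h)-1)]\le C_0 e^h$. Substituting $h=f_{i(n)}$ and using \eqref{eq_def_f_i} to get $C_0 e^{f_{i(n)}}=N_{i(n)}/i(n)^{1+\varepsilon}$, Markov's inequality yields
\[\PP_{|0}\bigl[\tau(T^{\uparrow}(f_{i(n)})-1)>N_{i(n)}/10\bigr]\le 10/i(n)^{1+\varepsilon},\]
which by \eqref{lowerboundi(n)} is summable in $n$, so Borel--Cantelli gives the conclusion $\PP_{|0}$-a.s. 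The real difficulty is to transfer this from $\PP_{|0}$ to $\PP^x$: because we are in the subballistic regime $\kappa<1$, the annealed expectation of $\tau(y)$ under $\PP^x$ is infinite for $y>x$ (its distribution has a power-law tail of index $\kappa$), ruling out a direct annealed Markov bound applied to the non-reflected walker.

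To overcome this, I would introduce a leftward barrier at $-L_n$ with $L_n:=\lceil(2/\kappa)\log n\rceil$ and split
\[\PP^x\bigl[\tau(b_{i(n)})>N_{i(n)}/10\bigr]\le \PP^x[\tau(-L_n)<\infty]+\PP^x\bigl[\tau(b_{i(n)})>N_{i(n)}/10,\ \tau(b_{i(n)})<\tau(-L_n)\bigr].\]
The first summand is $O(e^{-\kappa L_n})=O(n^{-2})$, following from \eqref{EgProba4} (with $c\to+\infty$) combined with the exponential tail of $-\inf_{y\ge 0}V(-y)$, which itself comes from \eqref{eq_sup_RW_2} applied to the random walk $-V(-\cdot)$ (with negative drift by \eqref{rho<0}). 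On the complementary event the walker never reaches $-L_n$, and its trajectory thus coincides with that of a walker reflected at $-L_n$; after a shift of the environment, Fact~\ref{FactTimeSpnetBeforeFirstValley} (with the reflecting barrier now at $0$ in the new coordinates) yields for this second contribution an annealed expectation of order $e^{f_{i(n)}}$, at worst up to a factor polynomial in $L_n=O(\log n)$ accommodating the shift of the starting point. Markov's inequality then bounds the second summand by $O((\log n)^{c}/i(n)^{1+\varepsilon})$, again summable by \eqref{lowerboundi(n)}. Summing both contributions and applying Borel--Cantelli completes the proof. The main obstacle is this adaptation of Fact~\ref{FactTimeSpnetBeforeFirstValley} with a moving reflecting barrier at $-L_n$ and an arbitrary starting point $x$; the tail estimate for $\tau(-L_n)$ and the final Borel--Cantelli step are routine.
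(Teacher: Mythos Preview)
Your plan is in the right spirit but takes an unnecessarily heavy route, and one step is incorrectly justified. The paper's argument avoids moving barriers entirely with a much simpler observation: for $x=0$, let $S'$ be the walk reflected at $0$ (obtained by deleting excursions into $\Z\setminus\N$). Then, deterministically,
\[
\tau(y)-\tau'(y)\ \le\ \#\{k\ge 0:\ S_k\le 0,\ S_{k+1}\le 0\},
\]
and this last quantity is a single $\PP$-a.s.\ finite random variable (by transience to $+\infty$) that does not depend on $i$. Once Borel--Cantelli over the \emph{deterministic} index $i$ gives $\tau'(T^{\uparrow}(f_i)-1)\le N_i/40$ for all large $i$, you conclude immediately that $\tau(T^{\uparrow}(f_i)-1)\le N_i/20$ for all large $i$, hence $\tau(b_{i(n)})\le N_{i(n)}/20$ by \eqref{Premieremontee}. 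The cases $x<0$ and $x>0$ are reduced to $x=0$ by the finiteness of $\tau(0)$ and a two-line coupling, respectively. No quantitative left-tail estimates for the walk are needed at all.

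Two concrete problems in your proposal. First, your bound $\PP^x[\tau(-L_n)<\infty]=O(e^{-\kappa L_n})$ is not supported by the argument you give: the exponential tail of $-\inf_{y\ge 0}V(-y)$ controls how far $V(-\cdot)$ dips \emph{below} $0$, whereas what makes the hitting probability small is $V(-\cdot)$ being \emph{large}. Working from the explicit formula $P_\omega^x[\tau(-L)<\infty]=\sum_{k\ge x}e^{V(k)}/\sum_{k\ge -L}e^{V(k)}$, the numerator has a $\kappa$-regularly-varying tail (so infinite annealed mean) and the denominator grows like $e^{V(-L)}$; the resulting annealed decay rate is of order $\kappa|\E[\log\rho_0]|$, not $\kappa$, and since $|\E[\log\rho_0]|$ can be arbitrarily small your choice $L_n=\lceil(2/\kappa)\log n\rceil$ need not give a summable bound. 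Second, you cannot write $\PP_{|0}[\tau(T^{\uparrow}(f_{i(n)})-1)>N_{i(n)}/10]\le 10/i(n)^{1+\varepsilon}$ as stated: the left-hand side is a number while the right-hand side is $\omega$-measurable, and Fact~\ref{FactTimeSpnetBeforeFirstValley} is an \emph{annealed} bound which cannot be evaluated at the random $h=f_{i(n)}$. The fix is precisely what the paper does: run Borel--Cantelli over all integers $i$ (so $\sum_i 40\,i^{-1-\varepsilon}<\infty$), and only afterwards specialize to $i=i(n)$ on $\tilde\Omega$.
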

We will only use the weaker statement.
\begin{cor} Let $x\in\Z$.
For $\p$-almost every environment $\omega$,
\begin{equation}\label{eq:HittingTimeVeryDeepValleys2}
  P_\omega^x[\tau(b_{i(n)})\le N_{i(n)}/10]\to_{n\to+\infty}1\, .
  \end{equation}
\end{cor}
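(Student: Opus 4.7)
The plan is to deduce the corollary from Lemma \ref{LemmaHittingTimeVeryDeepValleys} by a routine Fubini--Fatou argument, exploiting the fact that the sequence of events $B_n := \{\tau(b_{i(n)}) \leq N_{i(n)}/10\}$ is measurable in the pair $(\omega,\text{trajectory})$, since the random indices $i(n)$ (and hence $b_{i(n)}$ and $N_{i(n)}$) depend only on $\omega$ via Proposition \ref{Prop_i_n}.

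First I would rewrite the conclusion of Lemma \ref{LemmaHittingTimeVeryDeepValleys} as
\[
\PP^x\Bigl(\liminf_{n\to\infty} B_n\Bigr) = 1.
\]
Using the disintegration $\PP^x(\cdot) = \int P_\omega^x(\cdot)\,\p(\mathrm d\omega)$ and Fubini's theorem, this is equivalent to the statement that for $\p$-almost every environment $\omega$,
\[
P_\omega^x\Bigl(\liminf_{n\to\infty} B_n\Bigr) = 1.
\]

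Second, for each such good environment $\omega$ I would apply Fatou's lemma to the indicator sequence $(\un_{B_n})_n$:
\[
1 = P_\omega^x\Bigl(\liminf_{n\to\infty} B_n\Bigr) = \eo^x\Bigl[\liminf_n \un_{B_n}\Bigr] \leq \liminf_n \eo^x[\un_{B_n}] = \liminf_n P_\omega^x(B_n).
\]
Since $P_\omega^x(B_n) \leq 1$, this forces $\lim_{n\to\infty} P_\omega^x(B_n) = 1$, which is exactly \eqref{eq:HittingTimeVeryDeepValleys2}.

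There is no substantive obstacle here: this is just the standard implication \emph{almost-sure convergence implies convergence in probability}, applied to the indicator variables of the (environment-dependent) events $B_n$. The only point that requires any thought is the joint measurability of $B_n$ in $(\omega,\text{trajectory})$, which is immediate because the random index $i(n)$ is a function of $\omega$ alone.
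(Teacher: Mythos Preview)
Your proof is correct and follows essentially the same approach as the paper: disintegrate the annealed almost-sure statement of Lemma \ref{LemmaHittingTimeVeryDeepValleys} into a $P_\omega^x$-almost-sure statement for $\p$-almost every $\omega$, then pass to expectations. The paper simply says ``taking expectations gives \eqref{eq:HittingTimeVeryDeepValleys2}'' where you spell out the Fatou step and the measurability remark, but the argument is the same.
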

\begin{proof}
 Clearly, \eqref{eq:HittingTimeVeryDeepValleys2} follows from \eqref{eq:HittingTimeVeryDeepValleys} since
 \eqref{eq:HittingTimeVeryDeepValleys} says that for almost every environment $\omega$,
$\mathbf 1_{\{\tau(b_{i(n)})\le N_{i(n)}/10\}}\to_{n\to+\infty}1$, $P_\omega^x$-almost surely, and taking expectations gives \eqref{eq:HittingTimeVeryDeepValleys2}.
\end{proof}
\begin{proof}[Proof of Lemma \ref{LemmaHittingTimeVeryDeepValleys}]
We start with the case $x=0$.
We consider the RWRE $S'$ reflected at 0, obtained by deleting the excursions of $S$ in $\mathbb Z\setminus \mathbb N$.
More precisely, let
$r_0:=0$ and
$$
    r_{k+1}
:=
    \min\{\ell> r_k\ :\ S_\ell\ge 0,\ S_{\ell-1}\ge 0\},\quad k\in\N.
$$
We define formally $S':=(S'_k)_{k\in\N}$ by $S'_k:=S_{r_k}$ for every $k\in \mathbb N$. Its hitting times are denoted by
$$
    \tau'(y)
:=
    \inf\{k\ge 0\ :\ S'_k=y\},\quad y\in\mathbb N.
$$
Notice that, under $P_\omega^0$, $S'$ is a RWRE in the environment $\omega$, reflected at 0 on the right side
(that is, a RWRE with transition probabilities
given by \eqref{probatransition} in which we replace $\omega_0$ by 1 and $S^{(1)}$ by $S'$).
So $S'$ has, under $\P$, the same law as $S$ under $\PP_{|0}$ (defined in Fact \ref{FactTimeSpnetBeforeFirstValley}).

Consequently, for every positive integer $i$, due to the Markov inequality and to Fact \ref{FactTimeSpnetBeforeFirstValley},
we have ($N_i$ and $f_i$ being defined respectively in \eqref{eq_def_N_i} and \eqref{eq_def_f_i}),
\begin{eqnarray*}
    \PP\big[\tau'\big(T^{\uparrow}(f_{i})-1\big)>N_{i}/40\big]
&\leq&
    40 N_i^{-1}\, \EE\big[ \tau'\big(T^{\uparrow}(f_{i})-1\big)\big]
\\
&=&
    40N_i^{-1}\, \EE_{|0}\big[\tau(T^{\uparrow}(f_{i})-1\big)\big]
\\
&\leq&
    40N_i^{-1}\, C_0 e^{f_{i}}
\\
& = &
    40i^{-1-\varepsilon}\, .
\label{eqAvantdefinitiondeg}
\end{eqnarray*}
This gives
$ \sum_{i\geq 1} \PP[\tau'(T^{\uparrow}(f_{i})-1)>N_{i}/40]
< \infty$.
Hence,
due to the Borel-Cantelli lemma, $\PP$-almost surely, for all $i$ large enough,
$\tau'(T^{\uparrow}(f_{i})-1)\leq N_{i}/40$.
Moreover,
$$
    \tau\big(T^{\uparrow}(f_{i})-1\big)-\tau'\big(T^{\uparrow}(f_{i})-1\big)
\le
    \#\{k\ge 0\ :\ S_k\le 0,\ S_{k+1}\le 0\}<\infty\,
\quad
    \PP-\mbox{a.s.}\, ,
$$
since, due to \cite{S75}, $S_k\rightarrow+\infty$ $\PP$-almost surely as $k$ goes to infinity
because $\E(\log \rho_0)<0$ by \eqref{rho<0} in our setting.
Therefore  $\PP$-almost surely, for all $i$ large enough,
$\tau(T^{\uparrow}(f_{i})-1)\leq N_{i}/20$ and, in particular,
for $n$ large enough,
$\tau(b_{i(n)})\le \tau(T^{\uparrow}(f_{i(n)})-1)\leq N_{i(n)}/20$
due to \eqref{Premieremontee}.
This proves the lemma in the case $x=0$.

We now assume that $x<0$.
We use the simple decomposition $\tau(b_{i(n)})=\tau(0)+[\tau(b_{i(n)})-\tau(0)]$.
Since the RWRE $S$ is transient to $+\infty$,
$\tau(0)<\infty$ $\P^x$-almost surely, and so $\tau(0)\leq N_{i(n)}/20$ for large $n$, $\P^x$-almost surely.
Also, for $P$-almost every $\omega$, $(S_{k+\tau(0)}-S_{\tau(0)})_{k\in\N}$ has under $P_\omega^x$
the same law as $S$ under $P_\omega^0$ by the strong Markov property, so we can apply to it the results of the case $x=0$.
Hence, the hitting time of $b_{i(n)}$ by $(S_{k+\tau(0)}-S_{\tau(0)})_{k\in\N}$,
that is, $\tau(b_{i(n)})-\tau(0)$, is less than $N_{i(n)}/20$ for large $n$, $P_\omega^x$-almost surely, for $\p$-almost every $\omega$.
Summing these two inequalities proves the lemma in the case $x<0$.

Finally, assume that $x>0$. We use a simple coupling argument.
Possibly in an enlarged probability space, we can consider our RWRE $S$ starting from $x$ in the environment $\omega$,
and a RWRE $Z$ in the same environment $\omega$ but starting from $0$,
independent of $S$ conditionally on $\omega$. That is, for every $\omega$, $(S,Z)$
has the law of $\big(S^{(1)},S^{(2)}\big)$ under $P_{\omega}^{(x,0)}$ as defined in \eqref{probatransition}, so with a slight abuse of notation, we denote by $P_{\omega}^{(x,0)}$ the law of $(S,Z)$ conditionally on $\omega$.
Denote by $\tau_Z(x)$ the hitting time of $x$ by $Z$, which is finite almost surely since
$Z_0=0<x$ and $Z$ is transient to $+\infty$.
We can now define $Z'_k:=Z_k$ if $k\leq \tau_Z(x)$ and $Z'_k=S_{k-\tau_Z(x)}$ otherwise.
Conditionally on $\omega$, $Z'$ is, under $P_{\omega}^{(x,0)}$, a RWRE in the environment $\omega$, starting from $0$.
Hence we can apply to it and to its hitting times $\tau_{Z'}$ the previous result for $x=0$,
and so $\tau(b_{i(n)})=\tau_S(b_{i(n)})\leq \tau_{Z'}(b_{i(n)})\leq N_{i(n)}/20$ for large $n$, $P_{\omega}^{(x,0)}$-almost surely for $\p$-almost every $\omega$,
which proves the lemma when $x>0$.
\end{proof}

The next lemma says that with large probability, after first hitting $b_{i(n)}$, each RWRE
$S^{(j)}$ stays a long time between $a_{i(n)}$
and $ c_{i(n)}$.
\begin{lem} \label{LemmaStayingInside}
For every $\omega\in\tilde\Omega$,
$$
    \lim_{n\to+\infty}
    P_\omega^{b_{i(n)}}\big[\tau\big( a_{i(n)}-1\big)\wedge \tau\big( c_{i(n)}+1\big)<2N_{i(n)}\big]
=
    0.
$$
\end{lem}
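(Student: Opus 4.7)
The plan is to bound the exit probability from $[a_{i(n)},c_{i(n)}]$ through each endpoint separately by the hitting-time estimates of Fact \ref{FactInegProbaAtteinte} and then sum the two bounds. Concretely, for $\omega\in\tilde\Omega$,
\begin{equation*}
P_\omega^{b_{i(n)}}\bigl[\tau(a_{i(n)}-1)\wedge\tau(c_{i(n)}+1)<2N_{i(n)}\bigr]
\le P_\omega^{b_{i(n)}}\bigl[\tau(c_{i(n)}+1)<2N_{i(n)}\bigr]+P_\omega^{b_{i(n)}}\bigl[\tau(a_{i(n)}-1)<2N_{i(n)}\bigr],
\end{equation*}
and I would treat the two summands in turn, taking $k=2N_{i(n)}$ in each case.

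For the right-hand exit, I apply \eqref{InegProba1} with $b=b_{i(n)}$ and $c=c_{i(n)}+1$. Since $b_{i(n)}=e_{\sigma(i(n))}$ and $c_{i(n)}\le e_{\sigma(i(n))+1}-1$, the weak descending ladder structure forces $\min_{\ell\in[b_{i(n)},c_{i(n)}]}V(\ell)=V(b_{i(n)})$. Moreover, \eqref{eqDefHeightin} gives $H_{\sigma(i(n))}\ge f_{i(n)}+z_{i(n)}$, and since $V(e_{\sigma(i(n))+1})\le V(b_{i(n)})$ the height cannot be reached at the right endpoint of the excursion; therefore $c_{i(n)}$ is genuinely the first upcrossing of the level $V(b_{i(n)})+f_{i(n)}+z_{i(n)}$ after $b_{i(n)}$, so $V(c_{i(n)})\ge V(b_{i(n)})+f_{i(n)}+z_{i(n)}$. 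This yields
\begin{equation*}
P_\omega^{b_{i(n)}}\bigl[\tau(c_{i(n)}+1)<2N_{i(n)}\bigr]\le 2N_{i(n)}\,e^{-(f_{i(n)}+z_{i(n)})}.
\end{equation*}

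For the left-hand exit, I apply \eqref{InegProba2} with $a=a_{i(n)}-1$ and $b=b_{i(n)}$. By \eqref{monteeGauche}, $V(a_{i(n)})\ge V(b_{i(n)})+f_{i(n)}+z_{i(n)}$. Writing $C:=\log\bigl((1-\varepsilon_0)/\varepsilon_0\bigr)$, the ellipticity hypothesis \eqref{UE} gives $|V(k)-V(k-1)|\le C$ for every $k$, so
\begin{equation*}
V(a_{i(n)}-1)\ge V(b_{i(n)})+f_{i(n)}+z_{i(n)}-C,\qquad \min_{\ell\in[a_{i(n)}-1,b_{i(n)}-1]}V(\ell)\le V(b_{i(n)}-1)\le V(b_{i(n)})+C,
\end{equation*}
whence
\begin{equation*}
P_\omega^{b_{i(n)}}\bigl[\tau(a_{i(n)}-1)<2N_{i(n)}\bigr]\le 2N_{i(n)}\,e^{2C}\,e^{-(f_{i(n)}+z_{i(n)})}.
\end{equation*}

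It then remains to verify that $N_{i(n)}\,e^{-(f_{i(n)}+z_{i(n)})}\to 0$. From \eqref{eq_def_f_i} one has $e^{f_i}=N_i/(C_0 i^{1+\varepsilon})$, and by definition $e^{z_i}=i^{1/\kappa}$, so the quantity equals $C_0\, i(n)^{1+\varepsilon-1/\kappa}$. The standing choice $\varepsilon\in\bigl]0,(1-\kappa)/(2\kappa)\bigr[$ combined with $0<\kappa<1$ gives $1+\varepsilon<(1+\kappa)/(2\kappa)<1/\kappa$, so the exponent is strictly negative, and \eqref{lowerboundi(n)} ensures $i(n)\ge n\to\infty$. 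I do not anticipate a serious obstacle: all the geometric content (ladder structure, valley width, left- and right-hand heights $\ge f_{i(n)}+z_{i(n)}$) has already been packaged into $\tilde\Omega$ by Proposition \ref{Prop_i_n}, so the proof amounts to a direct application of Fact \ref{FactInegProbaAtteinte} followed by a short arithmetic check on the parameters.
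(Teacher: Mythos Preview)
Your proof is correct and follows essentially the same route as the paper's own argument: both split into left and right exits, apply the two inequalities of Fact~\ref{FactInegProbaAtteinte}, use the valley heights $\ge f_{i(n)}+z_{i(n)}$ together with a one-step ellipticity correction, and finish with the same arithmetic $N_{i(n)}e^{-(f_{i(n)}+z_{i(n)})}=C_0\,i(n)^{1+\varepsilon-1/\kappa}\to 0$. The only cosmetic differences are that the paper packages the two bounds into a single line and uses the coarser increment bound $\log(\varepsilon_0^{-1})$ instead of your $\log((1-\varepsilon_0)/\varepsilon_0)$.
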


\begin{proof}
Let $\omega\in\tilde\Omega$ and $n\geq 1$.
Observe that $0\leq e_{\sigma(i(n)-1)+1}\leq a_{i(n)} < b_{i(n)} <  c_{i(n)} \leq   e_{\sigma(i(n))+1}$ by \eqref{monteeGauche} and \eqref{eqDefHeightin}.
Hence, $\min_{[ a_{i(n)},  c_{i(n)}]}V=V(b_{i(n)})$.
Also, recall that $|V(y)-V(y-1)|\leq \log(\e_0^{-1})$ for any $y\in\Z$ by \eqref{UE}.
So by Fact \ref{FactInegProbaAtteinte},
\begin{eqnarray}
&&
    \po^{b_{i(n)}}\big[\tau\big( a_{i(n)}-1\big)\wedge \tau\big( c_{i(n)}+1\big)< 2 N_{i(n)}\big]
\nonumber\\
& \leq &
    2 (2N_{i(n)}) \varepsilon_0^{-1}\exp[-(f_{i(n)}+z_{i(n)})]
\nonumber\\
& = &
    4 \varepsilon_0^{-1} N_{i(n)}
    \exp[-(\log N_{i(n)} -\log C_0+[1/\k-1-\e]\log i(n))]
\nonumber\\
& = &
    4 C_0 \varepsilon_0^{-1} (i(n))^{-(1/\k-1-\e)}.
\nonumber
\end{eqnarray}
Since $1/\k-1-\e>0$ (recall that $0<\k<1$ and $\varepsilon\in]0,\frac{1-\kappa}{2\kappa}[$, see the beginning of Section \ref{Sect_Valleys}), the lemma is proved.
\end{proof}



\subsection{Coupling}\label{SubSecEndofSketch}
Similarly as in \cite[Subsection 5.3]{Devulder_Gantert_Pene} (see also \cite{Brox} and \cite{AndreolettiDevulder} for diffusions in random potentials), we use a coupling between $S$ starting from $b_{i(n)}$, and a
reflected RWRE $\widehat S$, defined as follows.
For fixed $n$ and $\omega\in\tilde\Omega$, we define $\widehat\omega_{ a_{i(n)}}=1$, $\widehat\omega_{ c_{i(n)}}=0$
and $\widehat\omega_y=\omega_y$ every for $y\in\mathbb Z\setminus \{ a_{i(n)}, c_{i(n)}\}$.
For $x\in\{ a_{i(n)},\dots, c_{i(n)}\}$, we consider
a RWRE $\widehat S$ in the environment $\widehat \omega:=(\widehat \omega_y)_{y\in\Z}$, starting from $x$,
and denote its quenched law by $P_{\widehat \omega}^x$. So, $\widehat S$ satisfies \eqref{probatransition}
with $\widehat \omega$ instead of $\omega$ and $\widehat S$ instead of $S^{(1)}$.
That is, $\widehat S$ is a RWRE in the environment $\omega$, reflected at $ a_{i(n)}$ and $ c_{i(n)}$.
Now, define the measure $\widehat \mu_n$ on $\Z$ by
$$
    \widehat\mu_n( a_{i(n)}):=e^{-V( a_{i(n)})},\
\qquad
    \widehat\mu_n( c_{i(n)}):=e^{-V( c_{i(n)}-1)},
$$
$$
    \forall x\in] a_{i(n)}, c_{i(n)}[,
\qquad
           \widehat \mu_n(x):=e^{-V(x)}+e^{-V(x-1)}\, ,
$$
and $\widehat \mu_n(x):=0$ for all $x\notin[ a_{i(n)}, c_{i(n)}]$.
Note that, for fixed environment $\omega$ and fixed $n$,  $\widehat\mu_n/\widehat\mu_n(\Z)$ is the invariant probability measure of $\widehat S$
(indeed, a classical and direct computation shows that $\widehat\mu_n(x)=\widehat\omega_{x-1}\widehat\mu_n(x-1)+(1-\widehat\omega_{x+1})\widehat\mu_n(x+1)$ for all $x\in\Z$).
Thus, an invariant probability measure for $\big(\widehat S_{2k})_{k\in\N}$ is $\widehat \nu_n$, defined by
\begin{equation}\label{eq_def_nu}
    \widehat \nu_n(x)
:=
    \widehat\mu_n(x)\un_{2\mathbb Z+b_{i(n)}}(x)/\widehat\mu_n(2\mathbb Z+b_{i(n)}),
\qquad
    x\in\Z\, .
\end{equation}
That is, $P_{\widehat \omega}^{\widehat \nu_n}\big(\widehat S_{2k}=x\big)=\widehat\nu_n(x)$ for all $x\in\Z$ and $k\in\N$,
with $P_{\widehat \omega}^{\widehat \nu_n}(\cdot):=\sum_{x\in\Z}\widehat\nu_n(x)P_{\widehat \omega}^x(\cdot)$.

For fixed $n$ and $\omega\in\tilde\Omega$, we consider a coupling $Q_\omega^{(n)}$ of $S$ and $\widehat S$ such that:
\begin{equation}\label{eq_def_Q}
    Q_\omega^{(n)}\big(\widehat S\in\cdot\big)
=
    P_{\widehat\omega}^{\widehat\nu_n}\big(\widehat S\in\cdot\big),
\qquad
    Q_\omega^{(n)}(S\in\cdot)
=
    P_\omega^{b_{i(n)}}(S\in\cdot),
\end{equation}
such that under $Q_\omega^{(n)}$, $\widehat S$ and $S$ move independently
until
$$
    \tau_{\widehat S=S}
:=
    \inf\big\{k\ge 0\, :\,  \widehat S_k=S_k\big\} \, ,
$$
then $\widehat S_k=S_k$ for every $\tau_{\widehat S=S}\leq k< \tau_{exit}$, where
$$
    \tau_{exit}
:=
    \inf\big\{k> \tau_{\widehat S=S}\, :\,  S_k\not\in[ a_{i(n)}, c_{i(n)}]\big\}\, ,
$$
then $\widehat S$ and $S$ move independently again after $\tau_{exit}$.
We stress that $\widehat \omega$,
$\tau_{\widehat S=S}$ and $\tau_{exit}$ depend on $n$, but we do not write $n$ as a subscript.

In order to show that the coupling occurs quickly under $Q_\omega^{(n)}$, i.e. that $\tau_{\widehat S=S}$ is small,
we prove the two following lemmas.
As before, for $x\in\Z$, $\tau(x)$ denotes the hitting time of $x$ by $S$ (not by $\widehat S$).
\begin{lem}\label{atteintetilde}
For every $\omega\in\tilde\Omega$, we have
$$
    \lim_{n\rightarrow +\infty}Q_\omega^{(n)} \left(\max\left(\tau(\alpha_{i(n)}),\tau(\gamma_{i(n)})\right)>\frac {N_{i(n)}}{2}\right)
=
    0.
$$
\end{lem}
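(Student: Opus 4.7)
Since the $S$-marginal of $Q_\omega^{(n)}$ equals $P_\omega^{b_{i(n)}}$ by \eqref{eq_def_Q}, my plan is to show that, for every $\omega\in\tilde\Omega$, each of $P_\omega^{b_{i(n)}}(\tau(\gamma_{i(n)})>N_{i(n)}/2)$ and $P_\omega^{b_{i(n)}}(\tau(\alpha_{i(n)})>N_{i(n)}/2)$ tends to $0$ as $n\to+\infty$; the conclusion then follows by a union bound. The two sides are treated symmetrically, so I only describe the right one.

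For the hitting time of $\gamma_{i(n)}$ I would split according to whether the walk exits the relevant interval on the wrong (i.e.\ left) side:
\begin{align*}
P_\omega^{b_{i(n)}}\bigl(\tau(\gamma_{i(n)})>N_{i(n)}/2\bigr) \leq\ & P_\omega^{b_{i(n)}}\bigl(\tau(a_{i(n)}-1)\wedge\tau(\gamma_{i(n)})>N_{i(n)}/2\bigr) \\
& + P_\omega^{b_{i(n)}}\bigl(\tau(a_{i(n)}-1)<\tau(\gamma_{i(n)})\bigr).
\end{align*}
For the first summand I would combine Markov's inequality with the quenched bound \eqref{InegProba4}: \eqref{inegalphabeta} controls the polynomial prefactor by $O((f_{i(n)}+z_{i(n)})^2)$, and the exponent inside \eqref{InegProba4} is at most $f_{i(n)}/2$. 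Indeed, on $\tilde\Omega$, $V(b_{i(n)})$ is the minimum of $V$ on the weak ladder excursion containing the valley, and ellipticity \eqref{UE} combined with \eqref{monteeGauche} yields $V(a_{i(n)}-1)\geq V(b_{i(n)})+f_{i(n)}+z_{i(n)}-\log(1/\varepsilon_0)\geq V(b_{i(n)})$ for $n$ large; meanwhile the very definition of $\gamma_{i(n)}$ gives $V(k)<V(b_{i(n)})+f_{i(n)}/2$ for $k\in[b_{i(n)},\gamma_{i(n)}-1]$. Since $N_{i(n)}\asymp i(n)^{1+\varepsilon}e^{f_{i(n)}}$ by \eqref{eq_def_N_i}--\eqref{eq_def_f_i}, the first summand is $O(f_{i(n)}^2\, e^{-f_{i(n)}/2}/i(n)^{1+\varepsilon})\to 0$.

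For the second summand I would use the exact formula \eqref{EgProba4}: its denominator is at least $e^{V(a_{i(n)}-1)}\geq \varepsilon_0\, e^{V(b_{i(n)})+f_{i(n)}+z_{i(n)}}$ by \eqref{UE} and \eqref{monteeGauche}, while its numerator $\sum_{k=b_{i(n)}}^{\gamma_{i(n)}-1}e^{V(k)}$ is at most $(\gamma_{i(n)}-b_{i(n)})e^{V(b_{i(n)})+f_{i(n)}/2}$; the ratio is thus of order $f_{i(n)}\,e^{-f_{i(n)}/2-z_{i(n)}}\to 0$. The argument for $\tau(\alpha_{i(n)})$ is completely parallel: I would use \eqref{InegProba3} on the interval $[\alpha_{i(n)},c_{i(n)}+1]$, together with the lower bound $V(c_{i(n)})\geq V(b_{i(n)})+f_{i(n)}+z_{i(n)}$ coming from \eqref{eqDefHeightin}.

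The main obstacle is the bookkeeping for the exponent in \eqref{InegProba4}/\eqref{InegProba3}: one must verify that the relevant maximum of $V(k)-V(\ell)$ inside the valley stays of order $f_{i(n)}/2$ rather than the much larger depth $f_{i(n)}+z_{i(n)}$, so that $e^{f_{i(n)}/2}\ll N_{i(n)}\approx e^{f_{i(n)}}$. This is exactly where the weak ladder structure of $b_{i(n)}$ and the careful construction of $\tilde\Omega$ --- in particular the separation property \eqref{monteeGauche} and the height property \eqref{Premieremontee} --- come into play.
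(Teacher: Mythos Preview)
Your proposal is correct and follows essentially the same strategy as the paper: reduce to the $P_\omega^{b_{i(n)}}$-marginal via \eqref{eq_def_Q}, treat $\tau(\alpha_{i(n)})$ and $\tau(\gamma_{i(n)})$ separately, and for each split into ``exit time of an interval is large'' (handled by Markov's inequality together with \eqref{InegProba3}/\eqref{InegProba4} and the bound on the potential increase by $f_{i(n)}/2$) versus ``exit on the wrong side'' (handled by the exact formula \eqref{EgProba4} and the depth $f_{i(n)}+z_{i(n)}$). The only cosmetic differences are that the paper works out the $\alpha$ side in detail and uses the interval $[\alpha_{i(n)},c_{i(n)}]$, whereas you work out the $\gamma$ side using $[a_{i(n)}-1,\gamma_{i(n)}]$; both choices are fine. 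One small remark: your closing comment that \eqref{Premieremontee} is needed here is not accurate---what you actually use are \eqref{monteeGauche}, \eqref{inegalphabeta}, \eqref{eqDefHeightin}, and the fact that $b_{i(n)}$ is a weak ladder point (so $V(b_{i(n)})=\min_{[0,e_{\sigma(i(n))+1}-1]}V$, not merely the minimum on a single excursion).
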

\begin{proof}
Let $\omega\in\tilde\Omega$.
Due to the Markov inequality and to \eqref{InegProba3} and \eqref{inegalphabeta}, we first observe that
\begin{eqnarray}
&\ &
    P_\omega^{b_{i(n)}}\left(\tau(\alpha_{i(n)})\wedge \tau(c_{i(n)})>\frac{N_{i(n)}}2\right)
\nonumber\\
&\le&
    (N_{i(n)}/2)^{-1}
    \varepsilon_0^{-1}(c_{i(n)}-\alpha_{i(n)})^2\exp\left(\max_{\alpha_{i(n)}\le \ell\le k\le c_{i(n)}-1,\ \ell\le b_{i(n)}-1}(V(\ell)-V(k))\right)
\nonumber\\
&\le&
    (N_{i(n)}/2)^{-1}\varepsilon_0^{-1}(2C_4\log N_{i(n)})^2
    \exp\left(\max_{[\alpha_{i(n)}, b_{i(n)}-1]}V-V(b_{i(n)})\right)
\nonumber\\
&\le&
    C_6 (N_{i(n)})^{-1}(\log N_{i(n)})^2\exp\left(f_{i(n)}/2\right)
\nonumber\\
&\le&
    C_7\frac{(\log N_{i(n)})^2}{(i(n))^{\frac{1+\varepsilon}{2}}\sqrt{N_{i(n)}}}
\label{AAAA1}
\end{eqnarray}
for every $n$ large enough since $V(\alpha_{i(n)})\leq V(b_{i(n)})+f_{i(n)}/2+\log(\e_0^{-1})$.
Moreover, due to \eqref{EgProba4} and \eqref{inegalphabeta},
for $n$ large enough
\begin{equation}\label{AAAA2}
    P_\omega^{b_{i(n)}}\left[\tau(\alpha_{i(n)})> \tau(c_{i(n)})\right]
\le
    \frac{\sum_{j=\alpha_{i(n)}}^{b_{i(n)}-1}e^{V(j)}}{\sum_{j=\alpha_{i(n)}}^{ c_{i(n)}-1}e^{V(j)}}
\le
    \frac{(2C_4\log N_{i(n)})\e_0^{-1}e^{\frac{f_{i(n)}}2}}{\e_0e^{f_{i(n)}+z_{i(n)}}}\, ,
\end{equation}
where we used the fact $\sum_{j=\alpha_{i(n)}}^{ c_{i(n)}-1}e^{V(j)}\ge e^{V(c_{i(n)}-1)}\ge \varepsilon_0 e^{V(b_{i(n)})+f_{i(n)}+z_{i(n)}}$ due to \eqref{eqDefHeightin} and \eqref{UE}.
Using \eqref{eq_def_Q}, then combining \eqref{AAAA1} and \eqref{AAAA2}, we obtain
\begin{equation}\label{eq_limit_1}
    Q_\omega^{(n)} \left(\tau(\alpha_{i(n)})>\frac {N_{i(n)}}{2}\right)
=
    P_\omega^{b_{i(n)}} \left(\tau(\alpha_{i(n)})>\frac {N_{i(n)}}{2}\right)
\to_{n\rightarrow +\infty}
    0.
\end{equation}
We prove analogously that
\begin{equation}\label{eq_limit_2}
    Q_\omega^{(n)} \left(\tau(\gamma_{i(n)})>\frac {N_{i(n)}}{2}\right)
=
    P_\omega^{b_{i(n)}} \left(\tau(\gamma_{i(n)})>\frac {N_{i(n)}}{2}\right)
\to_{n\rightarrow +\infty}
    0,
\end{equation}
using \eqref{InegProba4} instead of \eqref{InegProba3}.
Finally, \eqref{eq_limit_1} together with \eqref{eq_limit_2} prove the lemma.
\end{proof}

\begin{lem}\label{tempscouplage}
For every $\omega\in\tilde\Omega$, we have
$$
    \lim_{n\rightarrow +\infty}
    Q_\omega^{(n)}\left[\tau_{\widehat S=S}>\max\left(\tau(\alpha_{i(n)}),\tau(\gamma_{i(n)})\right)\right]=0\, .
$$
\end{lem}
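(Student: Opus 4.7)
The plan has three parts.

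\emph{Step 1 (Starting outside $[\alpha_{i(n)},\gamma_{i(n)}]$ is rare).}
Using \eqref{Vminore} together with the elementary lower bound $\widehat\mu_n(b_{i(n)})\geq e^{-V(b_{i(n)})}$, I would estimate
\[
\widehat\nu_n\big([\alpha_{i(n)},\gamma_{i(n)}]^c\big)\leq 2(c_{i(n)}-a_{i(n)})e^{-f_{i(n)}/4},
\]
which tends to $0$ thanks to \eqref{inegalphabeta} and \eqref{eq_def_f_i}. Hence $Q_\omega^{(n)}(\widehat S_0\notin[\alpha_{i(n)},\gamma_{i(n)}])\to 0$.

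\emph{Step 2 (Parity/continuity reduction).}
Since $\widehat\nu_n$ is supported on $2\Z+b_{i(n)}$, the process $D_k:=S_k-\widehat S_k$ is always even and its increments lie in $\{-2,0,+2\}$, so $\tau_{\widehat S=S}=\inf\{k\geq 0:D_k=0\}$ coincides, for $D_0\neq 0$, with the first sign change of $D$ by discrete continuity. Assume, by symmetry, that $x_0:=\widehat S_0\in[\alpha_{i(n)},b_{i(n)})$ so that $D_0\geq 2$ (the symmetric case lies in $(b_{i(n)},\gamma_{i(n)}]$, and $x_0=b_{i(n)}$ gives $\tau_{\widehat S=S}=0$). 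On $\{\tau_{\widehat S=S}>\tau_S(\alpha_{i(n)})\}$, $D$ stays $\geq 2$ throughout $[0,\tau_S(\alpha_{i(n)})]$, and in particular $\widehat S_{\tau_S(\alpha_{i(n)})}\leq\alpha_{i(n)}-2$.

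\emph{Step 3 (Bounding the one-sided excursion probability).}
By the independence of $S$ and $\widehat S$ before their meeting, together with \eqref{UE}, each $\pm 2$ increment of $D$ occurs with probability at least $\varepsilon_0^2$, which allows one to compare $D$ with a lazy simple random walk on $2\Z$ and obtain a first-passage upper bound of the form
\[
Q_\omega^{(n)}\big(D_j\geq 2\text{ for all }j\leq k \,\big|\,D_0\big)\leq \frac{C\,D_0}{\sqrt{k}}.
\]
Because \eqref{UE} also gives $|V(y)-V(y-1)|\leq\log\varepsilon_0^{-1}$, we have $b_{i(n)}-\alpha_{i(n)}\geq f_{i(n)}/(2\log\varepsilon_0^{-1})$ and hence deterministically $\tau_S(\alpha_{i(n)})\geq b_{i(n)}-\alpha_{i(n)}\geq c\, f_{i(n)}$. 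Combined with the tightness of $D_0$ under $\widehat\nu_n$ (which concentrates near $b_{i(n)}$ by \eqref{sumvalleybound}), this yields a bound of order $1/\sqrt{f_{i(n)}}$, which vanishes. Summing over the three contributions proves the lemma.

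The main obstacle is Step 3. A naive union bound on the $\widehat\nu_n$-mass outside $[\alpha_{i(n)},\gamma_{i(n)}]$ over $[0,N_{i(n)}/2]$, using Lemma \ref{atteintetilde}, only gives $O(N_{i(n)}(c_{i(n)}-a_{i(n)})e^{-f_{i(n)}/4})$, which blows up as soon as $\kappa\geq 1/4$. The gain comes from using the full discrete intermediate-value property of $D$, not just the stationary mass, combined with ellipticity to control the difference of two independent RWRE by a lazy simple random walk; this is where having both $\widehat S_0$ and the parity class of $S_0$ in $2\Z+b_{i(n)}$ is crucial.
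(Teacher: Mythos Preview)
Your Steps 1 and 2 are correct and match the paper's setup. The gap is in Step 3: the difference process $D_k = S_k - \widehat S_k$ cannot be compared to a lazy simple random walk in the way you claim. Ellipticity \eqref{UE} guarantees that each increment $\pm 2$ has conditional probability at least $\varepsilon_0^2$, but it says nothing about symmetry. The conditional drift of $D$ given the current positions is $2(\omega_{S_k}-\widehat\omega_{\widehat S_k})$, which can be bounded away from zero on either side; when this drift stays positive the survival probability $Q_\omega^{(n)}(D_j\geq 2\text{ for all }j\leq k\mid D_0)$ need not decay at all, let alone like $CD_0/\sqrt k$. The tightness of $D_0$ is also not delivered by \eqref{sumvalleybound}, which controls $\sum_x e^{-(V(x)-V(b_{i(n)}))}$ but not $\sum_x|x-b_{i(n)}|\,e^{-(V(x)-V(b_{i(n)}))}$; combined with the crude bounds $D_0\leq c_{i(n)}-a_{i(n)}=O(f_{i(n)})$ and $k\geq c\,f_{i(n)}$ you would only get $D_0/\sqrt{k}=O(\sqrt{f_{i(n)}})$.

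The paper's route is actually simpler than either approach you consider, and the ``naive'' worry is a red herring. Your Step~2 already yields $\widehat S_{\tau(\alpha_{i(n)})}<\alpha_{i(n)}$ on $\{\widehat S_0<S_0,\ \tau_{\widehat S=S}>\tau(\alpha_{i(n)})\}$. Now use that, up to time $\tau_{\widehat S=S}$, the pair $(\widehat S,S)$ under $Q_\omega^{(n)}$ has the same law as a \emph{fully independent} pair with marginals $P_{\widehat\omega}^{\widehat\nu_n}$ and $P_\omega^{b_{i(n)}}$. Dropping the event $\{\tau_{\widehat S=S}>\tau(\alpha_{i(n)})\}$ and conditioning on the $S$-measurable time $\tau(\alpha_{i(n)})$, stationarity of $\widehat\nu_n$ at even times gives
\[
Q_\omega^{(n)}\big(\widehat S_{2\lfloor\tau(\alpha_{i(n)})/2\rfloor}\leq\alpha_{i(n)},\ \tau_{\widehat S=S}>\tau(\alpha_{i(n)})\big)
\;\leq\;\widehat\nu_n\big([a_{i(n)},\alpha_{i(n)}]\big),
\]
which is exactly your Step~1 bound, applied at a \emph{single} random time. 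No union bound over $[0,N_{i(n)}/2]$ is needed, so the blow-up you anticipate never arises; the symmetric case $\widehat S_0>S_0$ uses $\gamma_{i(n)}$ in place of $\alpha_{i(n)}$.
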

\begin{proof}
Let $\omega\in\tilde\Omega$.
We fix $n\in\N$.
First, notice that due to \eqref{inegalphabeta} and \eqref{Vminore},
\begin{eqnarray}
    \widehat\nu_n\left([a_{i(n)},\alpha_{i(n)}]\right)
+
    \widehat\nu_n\left([\gamma_{i(n)},  c_{i(n)}]\right)
& \le &
    \sum_{k\in[ a_{i(n)}, c_{i(n)}]\setminus ]\alpha_{i(n)},\gamma_{i(n)}-1[}
    e^{-[V(k)-V(b_{i(n)})]}
\nonumber\\
& \leq &
    C_4 (f_{i(n)}+z_{i(n)})\, \varepsilon_0^{-1} e^{-\frac{f_{i(n)}}4}.
\label{Ineg_nu_loin}
\end{eqnarray}
Also, observe that
\begin{eqnarray}
&\ &
    Q_\omega^{(n)}\left[\tau_{\widehat S=S}>\max\left(\tau(\alpha_{i(n)}),\tau(\gamma_{i(n)})\right)\right]
\nonumber\\
&\le &
    Q_\omega^{(n)}\left(\tau_{\widehat S=S}>\tau(\alpha_{i(n)}),\ \widehat S_{0}<S_0\right)
    +
    Q_\omega^{(n)}\left(\tau_{\widehat S=S}>\tau(\gamma_{i(n)}),\ \widehat S_{0}>S_0\right).~~~~
\label{Ineg_Q}
\end{eqnarray}
Because of the definitions \eqref{eq_def_nu} and \eqref{eq_def_Q} of $\widehat\nu_n$ and $Q_\omega^{(n)}$,
$\widehat S_0-S_0=\widehat S_0-b_{i(n)}\in 2\mathbb Z$, $Q_\omega^{(n)}$-almost surely.
Moreover, the possible increments of $(\widehat S_k-S_k)_k$ are contained in $\{-2,0,2\}$. Therefore the walks $\widehat S$ and $S$ cannot cross  without meeting,
hence  $\widehat S_0<S_0,\ \tau_{\widehat S=S}> m$  implies for $m\ge 0$ that
$\widehat S_m<S_m$. So,
\begin{eqnarray}
    Q_\omega^{(n)}\left(\tau_{\widehat S=S}>\tau(\alpha_{i(n)}),\ \widehat S_{0}<S_0\right)
&\le&
    Q_\omega^{(n)}\left(\widehat S_{\tau(\alpha_{i(n)})}<S_{\tau(\alpha_{i(n)})},\ \tau_{\widehat S=S}>\tau(\alpha_{i(n)})\right)
\nonumber\\
&\le&
    Q_\omega^{(n)}\left(\widehat S_{2\lfloor \tau(\alpha_{i(n)})/2\rfloor}\leq\alpha_{i(n)},\ \tau_{\widehat S=S}>\tau(\alpha_{i(n)})\right)
\nonumber\\
&\le&
    \widehat\nu_n\left([ a_{i(n)},\alpha_{i(n)}]\right)
\nonumber\\
&\le&
    C_4\, \varepsilon_0^{-1}(f_{i(n)}+z_{i(n)})e^{-\frac{f_{i(n)}}4}\, ,
\label{Ineg_Q_nu}
\end{eqnarray}
where we used \eqref{Ineg_nu_loin} in the last inequality and
where the previous inequality comes from the independence of $\widehat S$ with $S$ and its hitting times $\tau(\cdot)$ up to
$\tau_{\widehat S=S}$ under $Q_\omega^{(n)}$, and from the fact that
$Q_\omega^{(n)}\big(\widehat S_{2k}=x\big)=P_{\widehat \omega}^{\widehat \nu_n}\big(\widehat S_{2k}=x\big)=\widehat\nu_n(x)$ for all $x\in\Z$ and $k\in\N$.
Analogously, we obtain that
\begin{equation}
    Q_\omega^{(n)}\left(\tau_{\widehat S=S}>\tau(\gamma_{i(n)}),\ \widehat S_{0}>S_0\right)
\le
    \widehat\nu_n\left([\gamma_{i(n)},  c_{i(n)}]\right)
\le
    C_4\, \varepsilon_0^{-1}(f_{i(n)}+z_{i(n)})e^{-\frac{f_{i(n)}}4}\, .
\label{Ineg_Q_nu2}
\end{equation}
Finally, combining \eqref{Ineg_Q}, \eqref{Ineg_Q_nu} and \eqref{Ineg_Q_nu2} proves the lemma.
\end{proof}

Finally, the following lemma says that for large $n$, with large $Q_\omega^{(n)}$ probability,
$\widehat S$ and $S$ are equal between times $N_{i(n)}/2$ and $2N_{i(n)}$:
\begin{prop}\label{couplage}
For every $\omega\in\tilde\Omega$, we have
$$
    \lim_{n\rightarrow +\infty}Q_\omega^{(n)}\left(\tau_{\widehat S=S}>\frac {N_{i(n)}}{2}\right)=0,
\qquad
    \lim_{n\rightarrow +\infty}Q_\omega^{(n)}\left(\tau_{exit}<2N_{i(n)}\right)=0\, .
$$
\end{prop}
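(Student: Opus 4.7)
The plan is to derive both limits of Proposition~\ref{couplage} from the three preceding lemmas; in each case the argument reduces to a single union bound.

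For the first limit, I would set $M_n := \max(\tau(\alpha_{i(n)}),\tau(\gamma_{i(n)}))$ and use the elementary inclusion
$$\{\tau_{\widehat S=S}>N_{i(n)}/2\}\subset \{\tau_{\widehat S=S}>M_n\}\cup\{M_n>N_{i(n)}/2\}.$$
The $Q_\omega^{(n)}$-probability of the first event on the right tends to $0$ by Lemma~\ref{tempscouplage}, and that of the second tends to $0$ by Lemma~\ref{atteintetilde}, so the first limit follows from a union bound.

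For the second limit, the key observation is that by \eqref{eq_def_Q} the $S$-marginal of $Q_\omega^{(n)}$ coincides with $P_\omega^{b_{i(n)}}$, and that $S$ is a nearest-neighbour walk starting at $b_{i(n)}\in [a_{i(n)},c_{i(n)}]$. Hence, if $\tau_{exit}<2N_{i(n)}$, then there is some $k$ with $\tau_{\widehat S=S}<k<2N_{i(n)}$ and $S_k\notin[a_{i(n)},c_{i(n)}]$, which forces $S$ to have hit $a_{i(n)}-1$ or $c_{i(n)}+1$ before time $2N_{i(n)}$. This gives the inclusion
$$\{\tau_{exit}<2N_{i(n)}\}\subset\{\tau(a_{i(n)}-1)\wedge\tau(c_{i(n)}+1)<2N_{i(n)}\}.$$
Passing to $Q_\omega^{(n)}$-probabilities and using the identification of the $S$-marginal with $P_\omega^{b_{i(n)}}$, Lemma~\ref{LemmaStayingInside} yields the desired convergence to $0$.

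So the proposition itself amounts to bookkeeping, and I do not expect a real obstacle at this stage; all the hard work has already been absorbed into the lemmas and into the very construction of $Q_\omega^{(n)}$. The delicate point (already resolved in the preceding subsection) is that the coupling must simultaneously (a) let $\widehat S$ and $S$ evolve independently up to $\tau_{\widehat S=S}$, so that the meeting between the near-stationary $\widehat S$ and the walk $S$ happens on the time scale of hitting $\alpha_{i(n)}$ or $\gamma_{i(n)}$ (which is what Lemma~\ref{tempscouplage} exploits), and (b) preserve the marginal law of $S$ as $P_\omega^{b_{i(n)}}$, so that the exit estimate of Lemma~\ref{LemmaStayingInside} remains directly applicable through \eqref{eq_def_Q}. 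The construction in \eqref{eq_def_Q} is precisely tailored to achieve both at once, which makes the final assembly in Proposition~\ref{couplage} immediate.
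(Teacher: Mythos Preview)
Your proposal is correct and follows essentially the same approach as the paper: the first limit is obtained from Lemmas~\ref{atteintetilde} and~\ref{tempscouplage} via the union bound you wrote, and the second from the inclusion $\{\tau_{exit}<2N_{i(n)}\}\subset\{\tau(a_{i(n)}-1)\wedge\tau(c_{i(n)}+1)<2N_{i(n)}\}$ together with \eqref{eq_def_Q} and Lemma~\ref{LemmaStayingInside}. Your write-up is in fact slightly more detailed than the paper's, which simply records these two steps without spelling out the inclusions.
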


\begin{proof}
The first limit is a direct consequence of Lemmas \ref{atteintetilde}  and \ref{tempscouplage}.
The second one follows from
\begin{eqnarray*}
    Q_\omega^{(n)}\left(\tau_{exit}<2N_{i(n)}\right)
& \leq &
    Q_\omega^{(n)}\big[\tau\big( a_{i(n)}-1\big)\wedge \tau\big(c_{i(n)}+1\big)<2N_{i(n)}\big]
\\
& = &
    P_\omega^{b_{i(n)}}\big[\tau\big(a_{i(n)}-1\big)\wedge \tau\big( c_{i(n)}+1\big)<2N_{i(n)}\big]
\to_{n\to+\infty}
    0
\end{eqnarray*}
by \eqref{eq_def_Q} and Lemma \ref{LemmaStayingInside}.
\end{proof}

\subsection{Conclusion}\label{Sect_conclusion}
The following lemma relies on the coupling of the previous subsection, and on the fact that
$\widehat\nu_n(b_{i(n)})$ is greater than some positive constant, uniformly in $n$ and $\omega$.

\begin{lem}\label{MINO1}
There exists $C_8>0$ such that, for every $\omega\in\widetilde\Omega$,
\begin{equation}
    \liminf_{n\rightarrow +\infty}\inf_{k\in [N_{i(n)}/2,2N_{i(n)}[\cap (2\mathbb Z)}
\ \
    P_\omega^{b_{i(n)}}\big[S_k=b_{i(n)}\big]
\ge
    C_8\, .
\end{equation}
\end{lem}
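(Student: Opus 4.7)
The plan is to combine the coupling developed in Subsection \ref{SubSecEndofSketch} with a uniform lower bound on $\widehat\nu_n(b_{i(n)})$ coming from \eqref{sumvalleybound}.

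First, I would establish that $\widehat\nu_n(b_{i(n)})$ is bounded below by a deterministic positive constant, uniformly in $n$ and $\omega\in\widetilde\Omega$. From the definition \eqref{eq_def_nu} of $\widehat\nu_n$, the numerator $\widehat\mu_n(b_{i(n)})\ge e^{-V(b_{i(n)})}$ while
$$
\widehat\mu_n\big(2\Z+b_{i(n)}\big)\le \widehat\mu_n(\Z)\le 2\sum_{k=a_{i(n)}}^{c_{i(n)}}e^{-V(k)}
=2e^{-V(b_{i(n)})}\sum_{k=a_{i(n)}}^{c_{i(n)}}e^{-(V(k)-V(b_{i(n)}))}< 16\,C_{2}\,e^{-V(b_{i(n)})},
$$
by \eqref{sumvalleybound}. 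Hence $\widehat\nu_n(b_{i(n)})\ge 1/(16\,C_{2})$ for every $n$ and every $\omega\in\widetilde\Omega$.

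Next, I would transfer this lower bound to $P_\omega^{b_{i(n)}}[S_k=b_{i(n)}]$ using the coupling. Fix $\omega\in\widetilde\Omega$ and an even integer $k\in[N_{i(n)}/2,2N_{i(n)}[$. On the event $\{\tau_{\widehat S=S}\le N_{i(n)}/2\}\cap\{\tau_{exit}\ge 2N_{i(n)}\}$, one has $\tau_{\widehat S=S}\le k<\tau_{exit}$, so $S_k=\widehat S_k$ by the very definition of $Q_\omega^{(n)}$. Since $k$ is even and $\widehat\nu_n$ is invariant for $(\widehat S_{2\ell})_{\ell}$, the marginal law of $\widehat S_k$ under $Q_\omega^{(n)}$ is $\widehat\nu_n$. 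Therefore, using \eqref{eq_def_Q},
$$
P_\omega^{b_{i(n)}}[S_k=b_{i(n)}]=Q_\omega^{(n)}[S_k=b_{i(n)}]\ge Q_\omega^{(n)}\bigl[\widehat S_k=b_{i(n)}\bigr]-Q_\omega^{(n)}\bigl[\tau_{\widehat S=S}>N_{i(n)}/2\bigr]-Q_\omega^{(n)}\bigl[\tau_{exit}<2N_{i(n)}\bigr].
$$
The first term equals $\widehat\nu_n(b_{i(n)})\ge 1/(16\,C_{2})$, and the two subtracted terms tend to $0$ as $n\to+\infty$ by Proposition \ref{couplage}, uniformly in $k$ (the bound does not depend on $k$). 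Taking the infimum over $k$ and then the $\liminf$ in $n$ yields the statement with, for instance, $C_8:=1/(32\,C_{2})$.

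The steps are essentially routine given what has been established: the only conceptual point is the uniform lower bound on $\widehat\nu_n(b_{i(n)})$, and that is immediate from \eqref{sumvalleybound} which was precisely designed for this purpose. The parity assumption on $k$ is essential because $\widehat S$ has periodic support, and this is why the statement restricts $k$ to $2\Z$; no extra work is needed.
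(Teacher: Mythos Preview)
Your proof is correct and follows essentially the same approach as the paper: a uniform lower bound on $\widehat\nu_n(b_{i(n)})$ from \eqref{sumvalleybound}, followed by the coupling estimate via Proposition~\ref{couplage}. The only difference is that the paper uses the exact identity $\widehat\mu_n(2\Z+b_{i(n)})=\sum_{j=a_{i(n)}}^{c_{i(n)}-1}e^{-V(j)}$ to get $\widehat\nu_n(b_{i(n)})\ge 1/(8C_2)$, whereas you bound $\widehat\mu_n(2\Z+b_{i(n)})$ by $\widehat\mu_n(\Z)$ and obtain a constant that is smaller by a factor of $2$; this is immaterial.
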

\begin{proof}
We fix $\omega\in\widetilde\Omega$.
First, notice that for every $n\in\N$, $\widehat\mu_n(2\mathbb Z+b_{i(n)})=\sum_{j= a_{i(n)}}^{ c_{i(n)}-1}e^{-V(j)}$, so
using \eqref{eq_def_nu} and \eqref{sumvalleybound},
\begin{equation}\label{eq_minoration_b}
    \forall n\in\N,
\qquad
    \widehat\nu_n(b_{i(n)})
\geq
    \frac{e^{-V(b_{i(n)})}}{\sum_{j= a_{i(n)}}^{ c_{i(n)}-1}e^{-V(j)}}
\geq
    \frac{1}{8C_2}
=:
    C_8>0.
\end{equation}
Using the definition of the coupling
(first \eqref{eq_def_Q}, and then $\widehat S_j=S_j$ for every $\tau_{\widehat S=S}\leq j< \tau_{exit}$),
we get for every $n\in\N$ and $k\in \big[\frac{N_{i(n)}}{2},2N_{i(n)}\big[$,
\begin{eqnarray*}
    P_\omega^{b_{i(n)}}\left[S_k=b_{i(n)}\right]
& = &
    Q_\omega^{(n)}\left[S_k=b_{i(n)}\right]
\\
&\ge&
    Q_\omega^{(n)}\left[S_k=b_{i(n)},\ \tau_{\widehat S=S}\leq \frac {N_{i(n)}}{2}\leq k< 2N_{i(n)}\leq \tau_{exit}\right]
\\
& = &
    Q_\omega^{(n)}\left[\widehat S_k=b_{i(n)},\
    \tau_{\widehat S=S}\leq \frac {N_{i(n)}}{2}\leq k< 2N_{i(n)}\leq \tau_{exit}
    \right]
\\
&\ge&
    Q_\omega^{(n)}\left[\widehat S_k=b_{i(n)}\right]
    -Q_\omega^{(n)}\left[\tau_{\widehat S=S}>\frac {N_{i(n)}}{2}\ \mbox{or}\ \tau_{exit}<2N_{i(n)}\right].
\end{eqnarray*}
Since
$
    Q_\omega^{(n)}\big[\widehat S_k=b_{i(n)}\big]
=
    P_{\widehat\omega}^{\widehat\nu_n}\big[\widehat S_k=b_{i(n)}\big]
=
    \widehat\nu_n\big(b_{i(n)}\big)
$
for every even $k$
(by \eqref{eq_def_Q} and due to the remark after \eqref{eq_def_nu}), and using Proposition \ref{couplage},
we get
$$
    \liminf_{n\rightarrow +\infty}\inf_{k\in [N_{i(n)}/2,2N_{i(n)}[\cap (2\mathbb Z)}
    P_\omega^{b_{i(n)}}\left[S_k=b_{i(n)}\right]
\geq
    \liminf_{n\rightarrow +\infty} \widehat\nu_n\big(b_{i(n)}\big)
\geq
    C_8
$$
due to \eqref{eq_minoration_b}.
Since this is true for every $\omega\in\widetilde\Omega$, this proves the lemma.
\end{proof}
We now set $N'_{i(n)}=N_{i(n)}-\mathbf 1_{\{(N_{i(n)}-b_{i(n)})\in (1+2\mathbb Z)\}}$, for all $n\in\N$ and $\omega\in\widetilde\Omega$,
so that $N'_{i(n)}$ and $b_{i(n)}$ have the same parity.
\begin{lem}\label{Lemma_Proba_etre_en_b}
Let $x\in(2\Z)$.
For $\p$-almost every environment $\omega$,
$$
    \liminf_{n\rightarrow +\infty} P_\omega^x\big[S_{N'_{i(n)}}=b_{i(n)}\big]
\ge
    \frac{C_8}2.
$$
\end{lem}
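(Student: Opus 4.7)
The plan is to decompose on the hitting time $\tau(b_{i(n)})$ and apply the strong Markov property at this stopping time, namely
$$ P_\omega^x\big[S_{N'_{i(n)}} = b_{i(n)}\big] \;\geq\; \sum_{k=0}^{\lfloor N_{i(n)}/10 \rfloor} P_\omega^x\big[\tau(b_{i(n)}) = k\big]\, P_\omega^{b_{i(n)}}\big[S_{N'_{i(n)}-k} = b_{i(n)}\big]. $$
The two ingredients in the right-hand side are exactly the two results already at our disposal: the corollary \eqref{eq:HittingTimeVeryDeepValleys2} of Lemma~\ref{LemmaHittingTimeVeryDeepValleys}, which tells us that for $\p$-a.e.\ $\omega$ the total $P_\omega^x$-mass of the indices $k\in[0,\lfloor N_{i(n)}/10\rfloor]$ in that sum tends to $1$; and Lemma~\ref{MINO1}, which produces the constant $C_8$ as an asymptotic lower bound for each factor $P_\omega^{b_{i(n)}}[S_{N'_{i(n)}-k} = b_{i(n)}]$, provided $N'_{i(n)}-k$ is even and lies in $[N_{i(n)}/2, 2 N_{i(n)})$.

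The only thing that requires care is checking that Lemma~\ref{MINO1} actually applies to every term in the sum, i.e.\ the parity and range conditions on the index $N'_{i(n)}-k$. For the parity: since the walk has $\pm 1$ steps, $S_k \equiv S_0+k \pmod 2$, so on $\{\tau(b_{i(n)}) = k\}$ we have $k \equiv b_{i(n)}-x \equiv b_{i(n)} \pmod 2$, using the hypothesis $x\in 2\Z$. By the very definition of $N'_{i(n)}$, also $N'_{i(n)} \equiv b_{i(n)} \pmod 2$. Therefore $N'_{i(n)}-k \in 2\Z$ for every $k$ appearing in the decomposition. For the range: since $N'_{i(n)} \in \{N_{i(n)}-1,N_{i(n)}\}$ and $k \leq N_{i(n)}/10$, we have $N_{i(n)}/2 \leq N'_{i(n)}-k < 2N_{i(n)}$ for $n$ large enough. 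Hence Lemma~\ref{MINO1} applies.

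To conclude, fix $\omega$ in the full-measure set $\widetilde\Omega \cap \{\omega : \eqref{eq:HittingTimeVeryDeepValleys2} \text{ holds}\}$, and fix any $\eta\in (0,C_8/2)$. By Lemma~\ref{MINO1} there exists $n_0(\omega)$ such that for all $n\geq n_0$ and all even $k \in [N_{i(n)}/2, 2N_{i(n)})$ one has $P_\omega^{b_{i(n)}}[S_k = b_{i(n)}] \geq C_8 - \eta$. Combined with the display above and with the previous parity/range discussion, this yields, for $n\geq n_0$,
$$ P_\omega^x\big[S_{N'_{i(n)}} = b_{i(n)}\big] \;\geq\; (C_8-\eta)\, P_\omega^x\big[\tau(b_{i(n)}) \leq N_{i(n)}/10\big]. $$
Since \eqref{eq:HittingTimeVeryDeepValleys2} forces the last probability to tend to $1$, letting $n\to+\infty$ gives $\liminf_{n} P_\omega^x[S_{N'_{i(n)}} = b_{i(n)}] \geq C_8-\eta$, and letting $\eta\to 0$ gives $\liminf_n \geq C_8 > C_8/2$.

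The expected ``hard part'' is entirely the parity/range bookkeeping sketched above; this is precisely what the construction of $N'_{i(n)}$ and the evenness assumption on $x$ are designed to resolve. Once that is in place, the estimate follows by plugging the uniform lower bound of Lemma~\ref{MINO1} into the strong-Markov decomposition and invoking \eqref{eq:HittingTimeVeryDeepValleys2}; no further analytic work is needed.
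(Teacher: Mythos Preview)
Your proof is correct and follows essentially the same strong-Markov decomposition at $\tau(b_{i(n)})$ as the paper, with the same two inputs (Lemma~\ref{MINO1} and \eqref{eq:HittingTimeVeryDeepValleys2}); you are merely more explicit about the parity and range verification. In fact, by introducing the auxiliary $\eta$ you obtain the slightly sharper $\liminf \geq C_8$, whereas the paper is content with $C_8/2$.
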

\begin{proof}
Define $\mathcal F_k:=\sigma(S_0, S_1, \dots, S_k, \omega)$ for $k\in\N$.
Let $x\in(2\Z)$ and $\omega\in\widetilde\Omega$.
Due to the strong Markov property,
\begin{eqnarray}
    P_\omega^x\big[S_{N'_{i(n)}}=b_{i(n)}\big]
&=&
    E_\omega^x\left[ P_\omega^x\left(S_{N'_{i(n)}}=b_{i(n)}\,\big|\,\mathcal F_{\tau(b_{i(n)})}\right)\right]
\nonumber\\
&\ge &
    E_\omega^x\left[ \mathbf 1_{\{\tau(b_{i(n)})\le N_{i(n)}/10\}}P_\omega^{b_{i(n)}}\left[S_k=b_{i(n)}\right]\bigg|_{k=N'_{i(n)}-\tau(b_{i(n)})}\right]
\nonumber\\
&\ge&
    \frac{C_8}{2}\, P_\omega^x\left[\tau(b_{i(n)})\le \frac {N_{i(n)}}{10}\right]\,
\label{Ineg_Proba_Egal_bin}
\end{eqnarray}
for large $n$,
since $\big(N'_{i(n)}-\tau(b_{i(n)})\big)\in[\frac{8 N_{i(n)}}{10},N_{i(n)}]\cap(2\Z)$ uniformly for large $n$ on $\{\tau(b_{i(n)})\le N_{i(n)}/10\}$,
and due to Lemma \ref{MINO1}.
Finally by \eqref{eq:HittingTimeVeryDeepValleys2}, $P_\omega^x[\tau(b_{i(n)})\le N_{i(n)}/10]\to_{n\to+\infty}1$ for $\p$-almost every environment $\omega$. This, combined with \eqref{Ineg_Proba_Egal_bin}, proves the lemma.
\end{proof}
\begin{proof}[Proof of Theorem \ref{Theorem1Meeting}]
Set $\mathcal D$ for the set of $(x_1,...,x_d)\in(2\mathbb Z)^d\cup(2\mathbb Z +1)^d$.
Let $(x_1,...,x_d)\in\mathcal D$.

First, notice that if $(x_1,...,x_d)\in(2\Z)^d$, for $\p$-almost every $\omega$, for $n$ large enough,
\begin{equation*}
    P_\omega^{(x_1,\dots,x_d)}\bigg[S_{N'_{i(n)}}^{(1)}=\ldots=S_{N'_{i(n)}}^{(d)}
    \bigg]
\geq
    \prod_{j=1}^d P_\omega^{x_j}\bigg[S_{N'_{i(n)}}=b_{i(n)}\bigg]
\geq
    \bigg(\frac{C_8}{4}\bigg)^d
=:
    C_9>0,
\end{equation*}
by independence of $S^{(1)},\dots,S^{(d)}$ under $P_\omega^{(x_1,\dots,x_d)}$ and thanks to Lemma \ref{Lemma_Proba_etre_en_b}.
As a consequence, if $(x_1,...,x_d)\in(2\Z+1)^d$, for $\p$-almost every $\omega$, for $n$ large enough, by the Markov property,
$$
    P_\omega^{(x_1,\dots,x_d)}\bigg[S_{N'_{i(n)}+1}^{(1)}=...=S_{N'_{i(n)}+1}^{(d)}\bigg]
=
    E_\omega^{(x_1,\dots,x_d)}
    \bigg[
        P_\omega^{(S_1^{(1)},\dots,S_1^{(d)})}\big(S_{N'_{i(n)}}^{(1)}=...=S_{N'_{i(n)}}^{(d)}\big)
    \bigg]
\geq
    C_9.
$$

For every nonnegative integer $n$, let us write $A_n$ and $E$ for the events:
$$
    A_n
:=
    \big\{S_{n}^{(1)}=\ldots = S_{n}^{(d)}\big\}
\quad \mbox{and}\quad
    E
:=
    \big\{A_n\ i.o.\big\}
=
    \cap_{N\ge 0}\cup_{n\ge N}A_n,
$$
and define $\theta_n:=N'_{i(n)}$ if $(x_1,...,x_d)\in(2\Z)^d$, and
$\theta_n:=N'_{i(n)}+1$ if $(x_1,...,x_d)\in(2\Z+1)^d$.
It follows from the two previous inequalities that, for every $(x_1,\dots,  x_d)\in\mathcal D$,
for $\p$-almost every $\omega$, for $n$ large enough,
$
    P_\omega^{(x_1,\dots,x_d)}\big[A_{\theta_n}\big]
\geq
    C_9
$.

As a consequence,
for every $(x_1,...,x_d)\in\mathcal D$,
for $\p$-almost every $\omega$,
\begin{eqnarray}
    \po^{(x_1,x_2,\dots,x_d)}\big[E\big]
&\ge&
    \po^{(x_1,x_2,\dots,x_d)}\left[\bigcap_{N\ge 0}\bigcup_{n\ge N}A_{\theta_n}  \right]\nonumber\\
&=&
    \lim_{N\rightarrow +\infty} \po^{(x_1,x_2,\dots,x_d)}\left[\bigcup_{n\ge N}A_{\theta_n}\right]\nonumber\\
&\ge&
    \limsup_{n\rightarrow +\infty}
    \po^{(x_1,x_2,\dots,x_d)}\big[A_{\theta_n}\big]
\ge
    C_9>0\, .
\label{MINO}
\end{eqnarray}
Observe moreover that, for a given $\omega$, $\big(S_n^{(1)},...,S_n^{(d)}\big)_n$ is a Markov chain and that $\mathbf 1_E$
is measurable, bounded and stationary (i.e. shift-invariant). Therefore,
due to a result of Doob (see e.g. \cite[Proposition V-2.4]{Neveu}),
\begin{equation}\label{Doob}
    \forall (x_1,...,x_d)\in\mathbb Z^d,
\qquad
   \lim_{n\rightarrow +\infty}P_\omega^{(S_n^{(1)},...,S_n^{(d)})}\big[E\big]
=
    \mathbf 1_E
\qquad
    P^{(x_1,...,x_d)}_\omega-\mbox{almost surely}.
\end{equation}
Also, for every $(x_1,...,x_d)\in\mathcal D$, $P_\omega^{(x_1,...,x_d)}$-almost surely, for every $n\ge 1$, $\big(S_n^{(1)},...,S_n^{(d)}\big)\in \mathcal D$. Due to
\eqref{MINO}, we obtain that, for $\p$-almost every $\omega$,
for every $(x_1,...,x_d)\in\mathcal D$, $P_\omega^{(S_n^{(1)},...,S_n^{(d)})}[{E}]\ge C_9$ for every $n$, $P_\omega^{(x_1,...,x_d)}$-almost surely.
Combining this with \eqref{Doob}, we conclude that for $\p$-almost every $\omega$,
for every $(x_1,...,x_d)\in\mathcal D$, $P_\omega^{(x_1,...,x_d)}$-a.s.,
$\mathbf 1_E\geq C_9>0$ i.e. $\mathbf 1_E=1$, and so that $P_\omega^{(x_1,...,x_d)}(E)=1$.
This ends the proof of Theorem \ref{Theorem1Meeting}.
\end{proof}

%


\section{Appendix : about random walks conditioned to stay positive}\label{Sect_RW_conditionned}

The following proposition gives the law of the potential at the left of $m_1(h)$,
and more precisely between ${}^{\uparrow}T_h(h)$ and $m_1(h)$ (defined in \eqref{eq_def_m1} and \eqref{eq_def_Tfleche_Gauche})
when ${}^{\uparrow}T_h(h)\geq 0$.
It is maybe already known, however we did not find it in the literature
(in which $m_1(h)$ is generally replaced by the local minimum of $V$ before a deterministic time instead of our
stopping time $T^{\uparrow}(h)$, see e.g. Bertoin \cite{Bertoin_Splitting}).

\begin{prop}\label{Lemma_Egalite_Loi_Gauche_de_m1}
Let $V$ be a random walk given as in \eqref{eq_def_V}
by the sequence of partial sums of i.i.d.
r.v. $\log \rho_i$, $i\in\Z$, such that $\p[\log \rho_0>0]>0$ and $\p[\log \rho_0<0]>0$
(this result does not require Hypotheses \eqref{UE}, \eqref{rho<0} or \eqref{ConditionA}).
Recall $V(-\cdot)$ and $T_{V(-\cdot)}$ from \eqref{defVminus} and \eqref{eq_Def_TV-}.
Let $h>0$. Then the process
$\big(V\big[m_1(h)-k\big]-V\big[m_1(h)\big], \, 0\leq k\leq m_1(h)-{}^{\uparrow}T_h(h)\big)$
conditioned on $\{{}^{\uparrow}T_h(h)\geq 0\}$
has the same law as
$\big(V(-k), \, 0\leq k\leq T_{V(-\cdot)}([h,+\infty[)\big)$
conditioned on $\{T_{V(-\cdot)}([h,+\infty[)<T_{V(-\cdot)}(]-\infty,0])\}$.
\end{prop}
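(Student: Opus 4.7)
The plan is a time-reversal decomposition at $m_1(h)$, combined with the strong Markov property of $V$, and a summation over the value of $m_1(h)$.

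Fix $n\geq L\geq 1$ and an admissible test path $\pi=(0,\pi_1,\ldots,\pi_L)$ with $\pi_j\in\,]0,h[$ for $1\leq j\leq L-1$ and $\pi_L\geq h$; write $R_j:=V(n-j)-V(n)$ for $0\leq j\leq n$. On $\{m_1(h)=n\}$, the strong Markov property of $V$ at time $n$ splits the path into two independent pieces: a pre-$n$ part, which forces $V(n)$ to be a strict new minimum on $[0,n-1]$ and (because $m_1(h)<T^{\uparrow}(h)$) enforces $V^{\uparrow}(n)<h$; and a post-$n$ part, in which the shifted walk $V(n+\cdot)-V(n)$ must perform a successful up-excursion from $0$ to $[h,+\infty[$ without going below $0$. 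The latter contributes an independent factor $p_h:=\p(T_V^h<T_V^{<0})>0$.

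By exchangeability of the i.i.d.\ increments $\log\rho_k$, the path $R|_{[0,n]}$ has, under $\p$, the joint law of $V(-\cdot)|_{[0,n]}$; and via the identity $V(j)-V(i)=R_{n-j}-R_{n-i}$ for $i\leq j$, the pre-$n$ constraints translate into $R$-coordinates as: $R_j>0$ for $1\leq j\leq n$, together with $\max_{0\leq a\leq b\leq n}(R_a-R_b)<h$ (``no descent of height $\geq h$''). Once $R|_{[0,L]}=\pi$ is imposed, the descent constraint applied at $a=L$ forces $R_j>\pi_L-h\geq 0$ for all $j\in[L,n]$, so the strict positivity of $R$ on $[L+1,n]$ is automatic and the descent constraints with $a<L$ are subsumed. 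The only remaining non-trivial constraint on the shifted walk $\tilde R_k:=R_{L+k}-R_L$ (a fresh copy of $V(-\cdot)$ from $0$, by translation invariance) is ``no descent of height $\geq h$ on $[0,n-L]$''. Applying the Markov property at $L$ to $W:=V(-\cdot)$ and summing over $n\geq L$ yields
\[
\p\bigl(R|_{[0,L]}=\pi,\ T_R^h=L,\ {}^{\uparrow}T_h(h)\geq 0\bigr)=p_h\cdot \p(W|_{[0,L]}=\pi)\cdot E[\tau_D],
\]
where $\tau_D$ is the first time a fresh independent copy of $W$ exhibits a descent of height $\geq h$. The same calculation with $\pi$ ranging over all admissible paths of all lengths gives $\p({}^{\uparrow}T_h(h)\geq 0)=p_h\cdot \p(T_W^h<T_W^{\leq 0})\cdot E[\tau_D]$, so taking the ratio cancels the common factor $p_h\cdot E[\tau_D]$ and recovers exactly the density of the RHS conditional law.

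The delicate step is the correct identification of the constraint $V^{\uparrow}(n)<h$, whose reversed form is the ``no descent of height $\geq h$'' condition on $R|_{[0,n]}$: without it, $\sum_{n\geq L}\p(W|_{[L+1,n]}>0)$ would diverge whenever $W=V(-\cdot)$ has positive drift (as is the case under \eqref{rho<0}), contradicting $\p\leq 1$ and signalling an error in the decomposition. A secondary concern is that $E[\tau_D]$ could in principle be infinite for degenerate walks; but since the same factor appears in both numerator and denominator of the conditional probability, the ratio remains well-defined by truncating at $n\leq N$ and letting $N\to\infty$, without any integrability hypothesis.
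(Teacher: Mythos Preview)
Your argument is correct and takes essentially the same approach as the paper: decompose over $\{m_1(h)=n\}$, apply the Markov property at the deterministic time $n$ to split off the post-$n$ factor $p_h$, and time-reverse the pre-$n$ segment to identify it with $V(-\cdot)$. The paper differs only cosmetically, decomposing at both $\{{}^{\uparrow}T_h(h)=\ell\}$ and $\{m_1(h)=\ell+u\}$ and applying the Markov property at $\ell$ and $\ell+u$ before reversing just the middle piece, whereas you reverse the whole pre-$n$ path first and then factor the reversed walk at $L$; your common factor $p_h\cdot E[\tau_D]$ is exactly the paper's $\Pi_2$ (and is automatically finite since it equals a probability divided by $\Pi_1(1)>0$, so your truncation remark is not needed).
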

\begin{proof}
First, notice that due to our hypotheses,
$m_1(h)<T^{\uparrow}(h)<\infty$ $\p$-almost surely, and $\p\big[{}^{\uparrow}T_h(h)\geq 0\big]>0$.
We denote by $ \bigsqcup$ the disjoint union. We write $\mathbb N^*$ for $\mathbb N\setminus \{0\}$.
Let $\psi:\bigsqcup_{t\in\mathbb N^*}\mathbb R^t\rightarrow \mathbb [0,+\infty[$ be a nonnegative measurable function with respect to the $\sigma$-algebra $\{\bigsqcup_{t\in\mathbb N^*}A_t\, :\, \forall t\in\mathbb N^*,\ A_t\in\mathcal B(\mathbb R^t)\}$.
In this proof, to simplify the notation, we set $m_1:=m_1(h)$ and ${}^{\uparrow}T:={}^{\uparrow}T_h(h)$.
We have,
\begin{eqnarray}
&&
    \E\big[
    \psi\big(V(m_1-k)-V(m_1), \, 0\leq k\leq m_1-{}^{\uparrow}T\big)
    \un_{\{{}^{\uparrow}T\geq 0\}}
    \big]
\nonumber\\
& = &
    \sum_{\ell=0}^\infty \sum_{u=1}^\infty
    \E\big[
    \psi\big(V(\ell+u-k)-V(\ell+u), \, 0\leq k\leq u\big)
    \un_{\{{}^{\uparrow}T=\ell,\ m_1=\ell+u\}}
    \big]
\nonumber\\
& = &
    \sum_{\ell=0}^\infty \sum_{u=1}^\infty
    \E\Big[
    \psi\big(V(\ell+u-k)-V(\ell+u), \, 0\leq k\leq u\big),
    \inf_{[0, \ell+u[}V > V(\ell+u),
\nonumber\\
&&
    \forall 0\leq i< \ell+u, V(i)-\inf\nolimits_{[0,i]}V<h,
    T_{V_{\ell+u}}([h,+\infty[)<T_{V_{\ell+u}}(]-\infty, 0[),
\nonumber\\
&&
    \max_{]\ell,\ell+u[}V<V(\ell+u)+h\leq V(\ell)
    \Big],
\label{eq_Loi_Lm_1}
\end{eqnarray}
where $V_{\ell+u}$ denotes the process $(V(\ell+u+x)-V(\ell+u), \,x\geq 0)$,
since the first $3$ conditions in the expectation in \eqref{eq_Loi_Lm_1} mean $m_1=\ell+u$ whereas the last one means ${}^{\uparrow}T=\ell$.
Hence,
\begin{eqnarray}
    \eqref{eq_Loi_Lm_1}
& = &
    \sum_{\ell=0}^\infty \sum_{u=1}^\infty
    \E\Big[
    \psi\big(V(\ell+u-k)-V(\ell+u), \, 0\leq k\leq u\big),
    \inf_{[\ell, \ell+u[}V > V(\ell+u),
\nonumber\\
&&
    \forall 0\leq i\leq \ell, V(i)-\inf\nolimits_{[0,i]}V<h,
    T_{V_{\ell+u}}([h,+\infty[)<T_{V_{\ell+u}}(]-\infty, 0[),
\nonumber\\
&&
    \max_{]\ell,\ell+u[}V<V(\ell+u)+h\leq V(\ell)
    \Big]
\label{eq_63}
\\
& = &
    \Pi_1(\psi)\Pi_2,
\label{eq_def_Pi1_Pi2}
\end{eqnarray}
where we used $\inf_{[0,\ell]}V>V(\ell)-h\geq V(\ell+u)$
and $V(i)-\inf_{[0,i]}V \leq \max_{]\ell,\ell+u[}V-V(\ell+u)<h$
for $\ell<i<\ell+u$
to prove that $\eqref{eq_63} = \eqref{eq_Loi_Lm_1}$ (in particular for proving $\eqref{eq_63} \leq \eqref{eq_Loi_Lm_1}$), followed by
the Markov property applied at times $\ell$ and $\ell+u$ in the last equality,
where
\begin{eqnarray*}
    \Pi_1(\psi)
& := &
    \sum_{u=1}^\infty
    \E\Big[
    \psi\big(V(u-k)-V(u), \, 0\leq k\leq u\big),
    \inf_{[0, u[}V > V(u),
    \max_{]0,u[}V<V(u)+h\leq 0
    \Big],
\nonumber\\
&&
\nonumber\\
    \Pi_2
& := &
    \sum_{\ell=0}^\infty
    \p\big[\forall 0\leq i\leq \ell, V(i)-\inf\nolimits_{[0,i]}V<h\big]
    \p\big[T_V([h,+\infty[)<T_V(]-\infty, 0[)\big].
\end{eqnarray*}
Thus, since $(V(u-k)-V(u), \ k\geq 0)$ has the same law as $(V(-k), \ k\geq 0)$,
\begin{eqnarray}
&&
    \Pi_1(\psi)
\nonumber\\
& = &
    \sum_{u=1}^\infty
    \E\Big[
    \psi\big(V(-k), \, 0\leq k\leq u\big),
    \inf_{k\in]0, u]}V(-k) > 0,
    \max_{k\in]0,u[}V(-k)<h\leq V(-u)
    \Big]
\nonumber\\
& = &
    \sum_{u=1}^\infty
    \E\Big[
    \psi\big(V(-k), \, 0\leq k\leq u\big),\,
    u=T_{V(-\cdot)}([h,+\infty[)<T_{V(-\cdot)}(]-\infty,0])
    \Big]
\nonumber\\
& = &
    \E\Big[
    \psi\big(V(-k), \, 0\leq k\leq T_{V(-\cdot)}([h,+\infty[)\big),\,
    T_{V(-\cdot)}([h,+\infty[)<T_{V(-\cdot)}(]-\infty,0])
    \Big].
~~~~~~\label{eq_Valeur_Pi1}
\end{eqnarray}
Applying this and \eqref{eq_def_Pi1_Pi2} to $\psi=1$, we get
$$
    \p\big[{}^{\uparrow}T\geq 0\big]
=
    \Pi_1(1)
    \Pi_2
=
    \Pi_2
    \p\big[T_{V(-\cdot)}([h,+\infty[)<T_{V(-\cdot)}(]-\infty,0])\big],
$$
which is non zero due to our hypotheses.
This, together with \eqref{eq_Loi_Lm_1}, \eqref{eq_def_Pi1_Pi2} and \eqref{eq_Valeur_Pi1} gives
\begin{eqnarray*}
&&
    \E\big[
    \psi\big(V(m_1-k)-V(m_1), \, 0\leq k\leq m_1-{}^{\uparrow}T\big)\
    \big|\ {}^{\uparrow}T\geq 0
    \big]
\\
& = &
    \E\Big[
    \psi\big(V(-k), \, 0\leq k\leq T_{V(-\cdot)}([h,+\infty[)\big)\
    \big|\ T_{V(-\cdot)}([h,+\infty[)<T_{V(-\cdot)}(]-\infty,0])
    \Big].
\end{eqnarray*}
This proves the lemma.
\end{proof}

\medskip

Moreover, the following proposition is useful in the proofs of Lemma  \ref{LemmaConstantInvariantMeasure} and Proposition \ref{BorelCantelli2}.
Notice in particular that we get an identity in law with a random walk conditioned to hit $[h,+\infty[$ before $]-\infty, 0[$, instead of
$]-\infty, 0]$ in the previous proposition.

\begin{prop}\label{LemmaTanakaStyleForRW}
Let $h>0$. Under the same hypotheses as in Proposition \ref{Lemma_Egalite_Loi_Gauche_de_m1},
if moreover $\liminf_{x\to+\infty}V(x)=-\infty$,\\
\noindent{\bf (i)}
The processes
$\big(V\big[m_{1}(h)-k\big]-V\big[m_{1}(h)\big],\ 0\leq k \leq m_{1}(h)\big)$
and
$\big(V\big[m_{1}(h)+k\big]-V\big[m_{1}(h)\big],\ 0\leq k \leq T^{\uparrow}(h)-m_{1}(h)\big)$
are independent.\\
\noindent{\bf (ii)}
The process
$\big(V\big[m_{1}(h)+k\big]-V\big[m_{1}(h)\big],\ 0\leq k \leq T^{\uparrow}(h)-m_{1}(h)\big)$
is equal in law to
$\big(V(k),\ 0\leq k\leq T_V([h,+\infty[) \big)$
conditioned on $\{T_V([h,+\infty[)<T_V(]-\infty,0[)\}$.
\end{prop}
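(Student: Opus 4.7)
The plan is to establish parts (i) and (ii) simultaneously by computing the joint expectation $\E[\psi_1(L)\psi_2(R)]$ for arbitrary bounded measurable functionals $\psi_1,\psi_2$, where $L$ and $R$ denote respectively the left and right processes in the statement, and showing it factorises as $\E[\psi_1(L)]$ times the expectation of $\psi_2$ against the claimed conditional law. I would follow the same scheme as the proof of Proposition~\ref{Lemma_Egalite_Loi_Gauche_de_m1}: decompose the expectation by summing over the possible values $m_1(h)=\ell$ and $T^{\uparrow}(h)-m_1(h)=u$ (with $u\geq 1$, which is automatic for $h>0$ since $m_1(h)=T^{\uparrow}(h)$ would force $0\geq h$), and then apply the Markov property at the deterministic time $\ell$.

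The key step will be rewriting the event $\{m_1(h)=\ell,\ T^{\uparrow}(h)=\ell+u\}$ as an intersection of a condition on $V|_{[0,\ell]}$ and a condition on the shifted post-$\ell$ walk $W(\cdot):=V(\ell+\cdot)-V(\ell)$. Explicitly, it equals $A_\ell\cap B_u^{W}$, with $A_\ell:=\{V(i)>V(\ell)\text{ for }0\leq i<\ell\}\cap\{V^{\uparrow}(\ell)<h\}$ and $B_u^{W}:=\{W(k)\geq 0\text{ for }0\leq k\leq u\}\cap\{W(k)<h\text{ for }0\leq k\leq u-1\}\cap\{W(u)\geq h\}$. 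The delicate point is the global max-increase condition $V^{\uparrow}(\ell+u-1)<h$, which a priori involves cross-terms $V(j)-V(i)$ with $i\leq\ell<j\leq\ell+u-1$; however on $A_\ell$ combined with $W(k)\geq 0$ for $k\leq u$, the global minimum of $V$ on $[0,\ell+u]$ equals $V(\ell)$, so these cross-terms reduce to $W(j-\ell)<h$, which is already part of $B_u^W$. Once this is verified the event splits cleanly, and the Markov property at $\ell$ turns the indicator into a product of independent factors, with $W$ distributed as $V$ started from $0$ and independent of $V|_{[0,\ell]}$.

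Once the factorisation is in hand, one recognises $B_u^W=\{T_V([h,+\infty[)=u<T_V(]-\infty,0[)\}$; summing over $u$ produces the expectation of $\psi_2\bigl((V(k))_{0\leq k\leq T_V([h,+\infty[)}\bigr)\un_{\{T_V([h,+\infty[)<T_V(]-\infty,0[)\}}$. Specialising $\psi_1\equiv 1$ then identifies the normalising constant $\sum_\ell\p[A_\ell]=1/\p[T_V([h,+\infty[)<T_V(]-\infty,0[)]$ (the denominator is positive since $\p[\log\rho_0>0]>0$ and $h>0$), and dividing yields both independence of $L$ and $R$ (part~(i)) and the conditional-law identification for $R$ (part~(ii)). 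The main obstacle I anticipate is the bookkeeping around strict versus non-strict inequalities encoding that $m_1(h)$ is the \emph{first} argmin of $V$ on $[0,T^{\uparrow}(h)]$; these must be tracked carefully but ultimately only refine the definitions of $A_\ell$ and $B_u^W$ without disturbing the factorisation. The standing hypotheses $\p[\log\rho_0>0]>0$, $\p[\log\rho_0<0]>0$ and $\liminf_{x\to+\infty}V(x)=-\infty$ are used to guarantee almost sure finiteness of $m_1(h)$ and $T^{\uparrow}(h)$ and positivity of the normalising denominator.
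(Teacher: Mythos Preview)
Your approach is correct and complete; the event decomposition $\{m_1(h)=\ell,\ T^{\uparrow}(h)=\ell+u\}=A_\ell\cap B_u^W$ is accurate (the cross-term analysis you sketch for $V^{\uparrow}(\ell+u-1)<h$ goes through), and the factorisation via independence of increments at the deterministic time $\ell$ then yields both (i) and (ii) exactly as you describe.

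However, the paper takes a somewhat different route. Instead of the double sum over the deterministic values $\ell=m_1(h)$ and $u=T^{\uparrow}(h)-m_1(h)$, it exploits the ladder structure: with the strict descending ladder epochs $e'_j$ and excursion heights $H'_j$ of \eqref{eq_def_ei_Prime}--\eqref{eq_def_Hi_Prime}, one has $m_1(h)=e'_L$ where $L=\min\{j\geq 0:H'_j\geq h\}$. The paper then sums only over the values of $L$ and applies the strong Markov property at the \emph{stopping time} $e'_\ell$; the post-$e'_\ell$ piece, conditioned on $\{H'_\ell\geq h\}$, directly gives the law in (ii), while the pre-$e'_\ell$ piece reassembles into $\E[\psi_1(L)]$. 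This avoids the need to verify the splitting of the global max-increase constraint $V^{\uparrow}(\ell+u-1)<h$ into pre- and post-$\ell$ pieces, which is the most delicate bookkeeping step in your argument. Your approach, on the other hand, mirrors the paper's own proof of Proposition~\ref{Lemma_Egalite_Loi_Gauche_de_m1} more closely and requires no auxiliary ladder notation; it is slightly more hands-on but entirely self-contained.
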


\begin{proof}
We write $\mathbb N^*$ for $\mathbb N\setminus \{0\}$. Let $\psi_1$ and $\psi_2$ be two nonnegative functions,
$\bigsqcup_{t\in\mathbb N^*}\mathbb R^t\rightarrow \mathbb [0,+\infty[$, measurable with respect to the $\sigma$-algebra $\{\bigsqcup_{t\in\mathbb N^*}A_t\, :\, \forall t\in\mathbb N^*,\ A_t\in\mathcal B(\mathbb R^t)\}$.
To simplify the notation, we set $m_1:=m_1(h)$ and $T{}^{\uparrow}:=T{}^{\uparrow}(h)$.

We also recall $e_i'$ and $H_i'$, $i\in\N$, from \eqref{eq_def_ei_Prime} and \eqref{eq_def_Hi_Prime}.
Notice in particular that
$m_1=e_L'$, where
$L:=\min\{\ell\geq 0,\ H_\ell'\geq h\}<\infty$ a.s.
Hence, summing over the values of $L$, we get
\begin{align*}
&
    \E\left[\psi_1\big(V(m_1-k)-V(m_1),\ 0\leq k \leq m_1\big)
            \psi_2\big(V(m_1+k)-V(m_1),\ 0\leq k \leq T{}^{\uparrow}-m_1\big)\right]
\\
& =
    \sum_{\ell=0}^\infty
    \E\big[\psi_1\big(V\big(e_\ell'-k\big)-V\big(e_\ell'\big),\ 0\leq k \leq e_\ell'\big)\un_{\cap_{i=0}^{\ell-1}\{H_i'<h\}}
    \un_{\{H_\ell'\geq h\}}
\\
&
    \qquad\qquad
    \times
    \psi_2\big(V\big(e_\ell'+k\big)-V\big(e_\ell'\big),\ 0\leq k \leq T_{V(\cdot+e_\ell')-V(e_\ell')}([h,+\infty[)\big)\big]
\\
& =
    \Pi_3\Pi_4,
\end{align*}
by the strong Markov property at stopping time $e_\ell'$, where
\begin{align*}
    \Pi_3
& :=
    \sum_{\ell=0}^\infty
    \E\big[\psi_1\big(V\big(e_\ell'-k\big)-V\big(e_\ell'\big),\ 0\leq k \leq e_\ell'\big)\un_{\cap_{i=0}^{\ell-1}\{H_i'<h\}}
    \big]
    \p[H_\ell'\geq h]
\\
& =
    \sum_{\ell=0}^\infty
    \E\big[\psi_1\big(V\big(e_\ell'-k\big)-V\big(e_\ell'\big),\ 0\leq k \leq e_\ell'\big)\un_{\{L=\ell\}}
    \big]
\\
& =
    \E\big[\psi_1\big(V(m_1-k)
    -V(m_1),\ 0\leq k \leq m_1\big)\big]
\end{align*}
and, since $\p[H_\ell'\geq h]=\p[T_V([h,+\infty[)<T_V(]-\infty,0[)]$,
\begin{align*}
    \Pi_4
& :=
    \E\big[\psi_2\big(V(k),\ 0\leq k \leq T_V([h,+\infty[))
     \mid T_V([h,+\infty[)<T_V(]-\infty,0[)
     \big].
\end{align*}
Since this is true for any $\psi_1$ and $\psi_2$, this proves the proposition.
\end{proof}

\bigskip
\noindent{\bf Acknowledgments.}
A part of this work was done during visits to Brest and Versailles universities and during an ANR meeting.
We thank the IUF, ANR MALIN (ANR-16-CE93-0003) and the LMBA, University of Brest, the LMV, University of Versailles for their hospitality. We also thank two anonymous referees for reading very carefully the first version and giving numerous constructive comments.


\bibliographystyle{alpha}

\begin{thebibliography}{}

\end{thebibliography}


\begin{thebibliography}{DGPS07}

\bibitem[A08]{Andreoletti_system}
    \textsc{Andreoletti, P.} (2008)
    A limit result for a system of particles in random environnent.
    {\it J. Stat. Phys.} {\bf 131}, 235--246.

\bibitem[AD15]{AndreolettiDevulder}
    \textsc{Andreoletti, P. and Devulder, A.} (2015)
    Localization and number of visited valleys for a transient diffusion in random environment.
    {\it Electron. J. Probab.} {\bf 20}, no 56, 1--58.

\bibitem[ADV16]{AndreolettiDevulderVechambre}
    \textsc{Andreoletti, P., Devulder, A. and V\'{e}chambre, G.} (2016)
    Renewal structure and local time for diffusions in random environment.
    {\it ALEA Lat. Am. J. Probab. Math. Stat.} {\bf 13}, 863--923.

\bibitem[BPS12]{BarlowPeresSousi}
    \textsc{Barlow, M., Peres, Y. and Sousi, P.} (2012)
    Collisions of random walks
    {\it Ann. Inst. H. Poincar\'e Probab. Stat.} {\bf 48}, no 4, 922--946.

\bibitem[BPP95]{BePePe}
\textsc{Benjamini, I., Pemantle, R. and Peres, Y.} (1995)
Martin capacity for Markov chains.
\textit{Ann. Probab.} \textbf{23},  1332--1346.

\bibitem[B93]{Bertoin_Splitting}
\textsc{Bertoin, J.} (1993)
Splitting at the infimum and excursions in half-lines for random walks and {L}\'{e}vy processes.
{\it Stochastic Process. Appl.} {\bf 47}, 17--35.


\bibitem[B86]{Brox}
    \textsc{Brox, Th.} (1986) A one-dimensional diffusion process in a {W}iener medium.
    {\it Ann. Probab.} {\bf 14},  1206--1218.

\bibitem[CC12] {CampariCassi}
\textsc{Campari, R. and Cassi, D.} (2012)
Random collisions on branched networks: How simultaneous diffusion prevents
encounters in inhomogeneous structures.
\textit{Physical Review E} \textbf{86.2}.









\bibitem[CC10]{Chen_Chen_10}
    \textsc{Chen, X. and Chen, D.} (2010)
    Two random walks on the open cluster of $\Z^2$
    meet infinitely often.
    \textit{Science China Mathematics} \textbf{53}, 1971--1978.

\bibitem[CC11]{Chen_Chen_11}
    \textsc{Chen, X. and Chen, D.} (2011)
    Some sufficient conditions for infinite collisions of simple
    random walks on a wedge comb.
    {\it Electron. J. Probab.} \textbf{16}, no 49, 1341--1355.

\bibitem[CWZ08]{Chen_Wei_Zhang_08}
    \textsc{Chen. D., Wei, B. and Zhang, F.} (2008)
    A note on the finite collision property of random walks.
    \textit{Statistics and Probability Letters}
    \textbf{78}, 1742--1747.

  \bibitem[C16]{Chen16}
\textsc{Chen, X.} (2016)
Gaussian bounds and collisions of variable speed random walks on lattices with power law conductances.
\textit{Stoch. Process Appl.} {\bf 10}, 3041--3064.


  \bibitem[DGPS07]{Dembo_Gantert_Peres_Shi}
    \textsc{Dembo, A., Gantert, N.,  Peres, Y., Shi, Z.} (2007)
    Valleys and the maximum local time for random walk in random environment.
\textit{Probab. Theory Related Fields} \textbf{137}, 443--473.

\bibitem[D16]{Devulder_Persistence}
    \textsc{Devulder, A.} (2016)
    Persistence of some additive functionals of Sinai's walk.
    {\it Ann. Inst. H. Poincar\'e Probab. Stat.} {\bf 52}, 1076--1105.


\bibitem[DGP18]{Devulder_Gantert_Pene}
    \textsc{Devulder, A., Gantert, N., P\`ene, F.} (2018)
    Collisions of several walkers in recurrent random environments.
    {\it Electron. J. Probab.} {\bf 23}, no. 90, 1--34.

\bibitem[DG12]{Dolgopyat_Goldsheid}
    \textsc{Dolgopyat, D. and Goldsheid, I.} (2012)
    Quenched limit theorems for nearest neighbour random walks in  1{D} random environment.
    {\it Comm. Math. Phys.} {\bf 315}, 241--277.


\bibitem[DE51]{Dvoretzky_Erdos}
    \textsc{Dvoretzky, A. and Erd{\"o}s, P.} (1951)
    Some problems on random walk in space.
    Proceedings of the {S}econd {B}erkeley {S}ymposium on {M}athematical {S}tatistics and {P}robability, 1950,
    353--367,
    University of California Press, Berkeley and Los Angeles.



\bibitem[ESZ09a]{ESZ2}
    \textsc{Enriquez, N., Sabot, C. and Zindy, O.} (2009)
Limit laws for transient random walks in random environment on Z.
{\it Ann. Inst. Fourier (Grenoble)}  {\bf 59}, 2469--2508.

\bibitem[ESZ09b]{ESZ3}
    \textsc{Enriquez, N., Sabot, C. and Zindy, O.} (2009)
    Aging and quenched localization for one dimensional random walks in random environment in the sub-ballistic regime. {\it Bull. Soc. Math. France}  {\bf 137}, 423-452.



\bibitem[G13]{Gallesco}\textsc{Gallesco, C.} (2013)
Meeting time of independent random walks in random environment.
\textit{ESAIM Probab. Stat.} \textbf{17}, 257--292.

\bibitem[GKP14]{NMFa}\textsc{Gantert, N., Kochler M. and P\`ene, F.} (2014)
On the recurrence of some random walks in random environment.
\textit{ALEA Lat. Am. J. Probab. Math. Stat.} \textbf{11}, 483--502.


\bibitem[G84]{Golosov84} \textsc{Golosov, A. O.} (1984)
    Localization of random walks in one-dimensional random environments.
    \textit{Comm. Math. Phys.} \textbf{92}, 491--506.


\bibitem[HP15]{HP15} \textsc{Hutchcroft, T. and Peres, Y.} (2015)
    Collisions of random walks in reversible random graphs.
    \textit{Electron. Commun. Probab.} \textbf{20}, no. 63, 1--6.





\bibitem[I72]{Iglehart}
    \textsc{Iglehart, D.L}. (1972)
     Extreme values in the GI/G/1 queue.
     \textit{Ann. Math. Statist.} {\bf 43}, 627--635.



\bibitem[JP17]{Jara_Peterson}
   \textsc{Jara M. and Peterson J.} (2017)
   Hydrodynamic limit for a system of independent, sub-ballistic random walks in a common random environment.
   {\it Ann. Inst. Henri Poincar\'e Probab. Stat.} {\bf 53}, 1747--1792.


\bibitem[KKS75]{KestenKozlovSpitzer}
    \textsc{Kesten, H., Kozlov, M.V. and Spitzer, F.} (1975)
    A limit law for random walk in a random
    environment. {\it Compositio Math.} {\bf 30}, 145--168.

\bibitem[K97]{Korshunov}
    \textsc{Korshunov, D.} (1997).
     On distribution tail of the maximum of a random walk.
     {\it Stochastic Process. Appl.} {\bf 72}, 97--103.

\bibitem[KP04]{KrishnapurPeres}
    \textsc{Krishnapur, M. and Peres, Y.} (2004)
   Recurrent graphs where two independent random walks collide finitely often.
    {\it Electron. Comm. Probab} {\bf 9}, 72--81.

\bibitem[N64]{Neveu} \textsc{Neveu, J.} (1964)
    Bases math\'ematiques du calcul des probabilit\'es, Masson et Cie \'editeurs, Paris.


\bibitem[P10]{Peterson_2010}
    \textsc{Peterson, J.} (2010)
    Systems of one-dimensional random walks in a common random environment.
    {\it Electron. J. Probab.} {\bf 15}, no. 32, 1024--1040.

\bibitem[P15]{Peterson}
    \textsc{Peterson, J.} (2015)
    Strong transience of one-dimensional random walk in a random environment.
    {\it Electron. C. Probab.} {\bf 20}, 1--10.

\bibitem[PS10]{Peterson_Sepp}
    \textsc{Peterson, J. and Sepp{\"a}l{\"a}inen T.} (2010)
    Current fluctuations of a system of one-dimensional random walks in random environment.
    {\it Ann. Probab.} {\bf 38}, 2258--2294.

\bibitem[PZ09]{Peterson_Zeitouni}
    \textsc{Peterson, J. and Zeitouni, O.} (2009)
    Quenched limits for transient, zero speed one-dimensional random walk in random environment.
    {\it Ann. Probab.} {\bf 37}, 143--188.



\bibitem[P21]{Polya} \textsc{P{\'o}lya, G.} (1921)
    \"{U}ber eine Aufgabe der Wahrscheinlichkeitsrechnung betreffend die Irrfahrt im Stra\ss ennetz.
    \textit{Math. Ann.} \textbf{84}, 149--160.


\bibitem[P84]{PolyaInc} \textsc{P{\'o}lya, G.} (1984)
   Collected papers, Vol IV. Edited by Gian-Carlo Rota, M. C. Reynolds and R. M. Shortt.
   \textit{MIT Press}, Cambridge, Massachusetts.


\bibitem[SZ07]{ShiZindy} \textsc{Shi, Z. and Zindy, O.} (2007)
A weakness in strong localization for {S}inai's walk.
\textit{Ann. Probab.} \textbf{35}, 1118--1140.


\bibitem[S82]{S82}
    \textsc{Sinai, Ya. G.} (1982)
    The limiting behavior of a one-dimensional random walk in a random
    medium.
    {\it Th. Probab. Appl.} {\bf 27}, 256--268.

\bibitem[S75]{S75}
    \textsc{Solomon, F.} (1975)
    Random walks in a random environment.
    {\it Ann. Probab.} {\bf 3}, 1--31.


\bibitem[Z01]{Z01}
    \textsc{Zeitouni, O.} (2004)
    Lecture notes on random walks in random environment.
    {\it \'Ecole d'\'et\'e de probabilit\'es de
    Saint-Flour 2001}. Lecture Notes in
    Math. {\bf 1837},
    189--312. Springer, Berlin.




\end{thebibliography}

\end{document}